\documentclass[11pt]{article}
\usepackage[left=3.2cm,right=3.2cm,top=3.5cm,bottom=3cm]{geometry}
\usepackage[utf8]{inputenc}
\usepackage[english]{babel}
\usepackage{amsmath}
\usepackage{amssymb}
\usepackage{amsthm}
\usepackage{newpxtext,newpxmath}
\usepackage{braket}
\usepackage{amscd}
\usepackage{setspace}
\usepackage{booktabs}
\usepackage{lscape}
\usepackage{rotating}
\usepackage{graphicx}
\usepackage{tikz}
\usepackage{xcolor}
\usepackage{mathtools}
\usepackage{sidecap}
\usepackage{emptypage}
\usepackage{enumitem}
\usepackage{todonotes}
\usepackage[colorlinks=true, allcolors=blue, pagebackref=true]{hyperref}

\newtheorem{theorem}{Theorem}
\newtheorem*{theorem*}{Theorem}
\newtheorem{theoremi}{Theorem}

\newtheorem{lemma}{Lemma}
\newtheorem{proposition}{Proposition}
\newtheorem*{proposition*}{Proposition}
\newtheorem{corollary}{Corollary}
\newtheorem*{corollary*}{Corollary}

\newtheorem*{conjecture*}{Conjecture}

\newtheorem*{result*}{Result}
\theoremstyle{remark}
\newtheorem{remark}{Remark}
\theoremstyle{definition}
\newtheorem{definition}{Definition}
\newtheorem*{definition*}{Definition}
\newtheorem{example}{Example}
\newtheorem*{example*}{Example}

\newtheorem*{Ack}{Acknowledgments}

\newcommand{\Z}{\mathbb{Z}}
\newcommand{\C}{\mathbb{C}}
\newcommand{\Q}{\mathbb{Q}}
\newcommand{\R}{\mathbb{R}}
\newcommand{\F}{\mathbb{F}}
\newcommand{\pro}{\mathbb{P}}
\newcommand{\Po}{\mathbb{H}}

\newcommand{\fa}{\mathfrak{a}}

\newcommand{\spec}{\mathrm{spec}}

\newcommand{\SL}{\mathrm{SL}}

\newcommand{\fp}{\mathfrak{p}}

\def\={\;=\;}
\def\:={\;:=\;}
\def\+{\;+\;}
\def\-{\;-\;}

\title{Atkin polynomials for families of abelian varieties with real multiplication}
\author{Gabriele Bogo \and Yingkun Li}
\date{}

\begin{document}
\maketitle

\begin{abstract}
Generalizing the work of Atkin and Kaneko–Zagier in the elliptic case~\cite{KZ}, we describe the non-ordinary locus of a genus-zero non-compact curve~$Y$ in a Hilbert modular variety in terms of the zeros of generalized Atkin's orthogonal polynomials. The argument relies on the recent construction of lifts of partial Hasse invariants for~$Y$~\cite{BLpHi}.
We further describe these orthogonal polynomials as denominators of Padé approximants to the logarithmic derivatives of solutions of the Picard–Fuchs differential equations associated with~$Y$. This provides a new link between Padé approximation and the geometry of the non-ordinary locus, extending a classical observation of Igusa for the Legendre family and applying, in particular, to situations where the Picard–Fuchs equations do not admit modular solutions.
As applications, we determine the three-term recurrence relations for Atkin polynomials attached to triangle curves via hypergeometric identities, and compute the supersingular locus of a double cover of the Teichmüller curve~$W_{17}$. In the latter case, we conjecture that the associated supersingular polynomial is self-reciprocal, implying that supersingular points occur in pairs.
\end{abstract}

Partially building on unpublished work by Atkin, Kaneko and Zagier~\cite{KZ} discovered in 1997 a surprising connection between the $J$-invariants of supersingular elliptic curves and the zeros of certain orthogonal polynomials. More precisely, let
\begin{equation}
\label{eq:sspintro}
\mathrm{ss}_p(J)\=\prod_{\substack{E/\overline{\F}_p \\ E\text{ supersingular}}}(J-J(E))\,\quad\in\F_p[J]
\end{equation}
be the polynomial describing supersingularity of elliptic curves over~$\overline{\F}_p$ in terms of their~$J$-invariant. 

\begin{theorem*}[Atkin, Kaneko–Zagier]
  There exists a positive definite scalar product~$\langle\,,\rangle$ on~$\R[J]$ and a unique associated family of monic orthogonal polynomials~$\{A_n(J)\}_{n} \subset \Q[J]$
  with the following property: for every prime~$p>3$ there exists~$n_p\in\Z_{\ge0}$ such that ~$A_{n_p}(J)$ has~$p$-integral coefficients and
\[
A_{n_p}(J)\;\equiv\;\mathrm{ss}_p(J)\mod p\,.
\]
\end{theorem*}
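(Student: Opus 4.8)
The plan is to realize both the supersingular polynomial and the orthogonal family as two faces of one object---the Picard--Fuchs (hypergeometric) equation of the universal elliptic curve over the $j$-line---and then to compare them modulo $p$. First I would make the arithmetic side explicit. By the theory of the Hasse invariant, the supersingular $j$-invariants over $\overline{\F}_p$ are exactly the zeros modulo $p$ of the Eisenstein series $E_{p-1}$. Writing $p-1 = 12m + 4\delta + 6\vare$ with $\delta,\vare\in\{0,1\}$ determined by $p\bmod 12$, one reduces $E_{p-1}$ modulo $p$, expresses it as a monomial in $E_4,E_6,\Delta$ times a polynomial in $j=E_4^3/\Delta$, and splits off the factors accounting for the points $j=0$ and $j=1728$. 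A theorem of Deuring and Igusa then identifies the result, up to $J^{\delta}(J-1728)^{\vare}$ and a scalar, with the truncation modulo $p$ of the period series ${}_2F_1\!\left(\tfrac{1}{12},\tfrac{5}{12};1;\tfrac{1728}{J}\right)$, whose parameters are the local exponents of the Picard--Fuchs equation at the cusp and the two elliptic points. In particular $\mathrm{ss}_p$ is, mod $p$, a hypergeometric polynomial of degree $\lfloor p/12\rfloor + \delta + \vare$.

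Next I would define the inner product and verify its two crucial properties, positivity and rationality of moments, realizing it in two equivalent ways. Analytically,
\[
\langle f,g\rangle \= \int_0^{1728} f(J)\,g(J)\,w(J)\,dJ,
\]
with $w$ the positive weight attached to the Picard--Fuchs equation (an imaginary part of a period along the boundary of the fundamental domain), which yields positive-definiteness; arithmetically, $\langle f,g\rangle$ equals the constant term in the $q$-expansion of $f(j)\,g(j)$ against a fixed weakly holomorphic form assembled from $E_4,E_6,\Delta$, which shows that the moments $\langle 1,J^k\rangle$ are rational. Gram--Schmidt then produces a unique monic orthogonal family $\{A_n(J)\}\subset\Q[J]$, settling the uniqueness assertion. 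By the Padé description recalled in the abstract, $A_n$ is the degree-$n$ denominator of the Padé approximant to the logarithmic derivative of the hypergeometric solution, so that $\{A_n\}$ obeys a three-term recurrence with coefficients that are explicit rational functions of $n$.

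Finally---the heart of the argument---I would match the two families modulo $p$. Both are governed by the same hypergeometric operator: the $A_n$ through their Padé recurrence over $\Q$, and $\mathrm{ss}_p$ as the essentially unique polynomial solution modulo $p$, namely the truncated hypergeometric series computing the Hasse invariant. One checks that the recurrence coefficients are $p$-integral for all but controlled indices, reduces the recurrence modulo $p$, and shows that at the single index $n_p=\lfloor p/12\rfloor + \delta + \vare$ its leading coefficient acquires a factor of $p$, forcing $A_{n_p}$ onto the space of polynomial solutions mod $p$; matching leading terms together with the $J^{\delta}(J-1728)^{\vare}$ factors then gives $A_{n_p}(J)\equiv\mathrm{ss}_p(J)\bmod p$.

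The main obstacle is precisely this last comparison. The two families are built by genuinely different mechanisms---a positive archimedean measure on one side, a characteristic-$p$ differential phenomenon on the other---and the sole bridge between them is the shared Picard--Fuchs equation. I expect the delicate points to be controlling the denominators in the recurrence coefficients so as to establish $p$-integrality, and verifying that the recurrence's leading coefficient vanishes modulo $p$ at exactly one index in the relevant range, which is what simultaneously forces the congruence and identifies $n_p$.
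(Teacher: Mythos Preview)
Your setup is sound and matches the paper: the inner product realized simultaneously as an integral along the boundary of the fundamental domain (giving positivity) and as a constant-term pairing in the $q$-expansion (giving rational moments), together with the identification of $A_n$ as the denominator of the $[n-1,n]$-Padé approximant of the moment generating function. You also correctly flag the Picard--Fuchs/hypergeometric equation as the bridge between the two sides.

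The gap is in the mechanism you propose for the ``heart of the argument''. You plan to reduce the three-term recurrence $A_{n+1}=(J-a_n)A_n-b_nA_{n-1}$ modulo $p$, locate an index where a ``leading coefficient acquires a factor of $p$'', and conclude that $A_{n_p}$ lands in the space of polynomial solutions of the hypergeometric ODE mod $p$. But the three-term recurrence is a recursion in the index $n$, while the hypergeometric equation is a second-order ODE in the variable $J$; a degeneration of $a_n$ or $b_n$ modulo $p$ gives information about $A_{n_p\pm1}$ in terms of $A_{n_p}$, not about whether $A_{n_p}(J)$ satisfies any differential equation in $J$. There is no evident reason why a breakdown of the recursion should produce a hypergeometric polynomial, and this step, as written, does not go through.

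The actual argument (both in Kaneko--Zagier and in the paper's Theorems~\ref{thm:main} and~\ref{thm:Pade}) uses the Padé characterization you already identified, but applies it \emph{directly} rather than through the recurrence. The key observation you are missing is that the moment generating function $\Phi(J^{-1})$ is congruent modulo $p$ to an honest rational function in $J$ whose denominator is exactly $\mathrm{ss}_p(J)$. In the modular language: since $E_{p-1}\equiv 1\pmod p$ in its $t$-expansion, one has
\[
\Phi\;=\;-\frac{P_1}{J'}\;\equiv\;-\frac{P_1}{J'\,E_{p-1}}\;\equiv\;\text{const}\cdot\frac{\vartheta E_{p-1}}{J'\,E_{p-1}}\pmod p,
\]
and the Serre-derivative computation turns the right-hand side into a quotient $R(J)/\widetilde E_{p-1}(J)$ with $R$ coprime to $\mathrm{ss}_p$. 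In your Picard--Fuchs language the same fact reads: the integral hypergeometric solution $y(t)$ reduces mod $p$ to its truncation, a polynomial, so $t^2\,y'/y$ is a rational function mod $p$ whose reduced denominator is $\mathrm{ss}_p$. Now a rational function of type $[n_p-1,n_p]$ is its own $[n_p-1,n_p]$-Padé approximant; since the Padé denominator of $\Phi$ is $A_{n_p}$, coprimality plus the degree bound force $A_{n_p}\equiv\mathrm{ss}_p\pmod p$ (and, in particular, $A_{n_p}$ is $p$-integral). No recurrence-degeneration step is needed, and the obstacle you anticipated about controlling denominators in the $a_n,b_n$ never arises.
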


The scalar product~$\langle\,,\rangle$ is called~\emph{Atkin's scalar product} and the polynomials~$\{A_n(J)\}_n$ are called~\emph{Atkin's polynomials}. 
Kaneko and Zagier proved that all Hecke operators are self-adjoint with respect to the Atkin scalar product, and gave closed-form expressions for the Atkin polynomials.
Their proof is based on the interpretation of the Eisenstein series~$E_{p-1}$ as a lift of the Hasse invariant, combined with the Padé approximation of the generating function of moments of the scalar product~$\langle\,,\rangle$. 

Several extensions of the work of Kaneko and Zagier have appeared over the years.
Some of these, imitating~\cite{KZ} and relying on the lifting of the Hasse invariant and the~$p$-integrality of the~$q$-expansion of modular forms, established Kaneko–Zagier-type results for the moduli space of elliptic curves with level structure, under a natural genus-zero assumption on the moduli space; see, for instance, the works of Sakai~\cite{Sa} and Tsutsumi~\cite{Ts}.
In a different direction, M.~Lane, in his Ph.D. thesis~\cite{Lane}, investigated Kaneko–Zagier-type results for Jacobians of hyperelliptic curves parametrized by Hecke triangle curves~$\Po/\Delta(2,m,\infty)$.
Recall that~$\Delta(n,m,\infty)$, for~$2\le n<m<\infty$, is a genus-zero Fuchsian group with one cusp and two elliptic points of orders~$n$ and~$m$, respectively; the most familiar example is~$\Delta(2,3,\infty)\simeq\SL_2(\Z)$, but for general~$n$ and~$m$, the group~$\Delta(n,m,\infty)$ is non-arithmetic.
The algebraic structure of the space of modular forms on~$\Delta(2,m,\infty)$ is analogous to that for~$\SL_2(\Z)$ (this is used in the definition and computation of Atkin polynomials).
However, due to the lack of a theory of Hasse invariants for triangle curves, Lane established the relation between the non-ordinary reduction of~$\Po/\Delta(2,m,\infty)$ modulo~$p$ and Atkin-like polynomials by explicitly computing the corresponding Hasse–Witt matrices.

We present here an approach that both unifies and vastly generalizes the result of Kaneko and Zagier, as well as its subsequent extensions, by considering families of abelian varieties arising from curves in Hilbert modular varieties.
Let~$F$ be a totally real field of degree~$g\ge2$ over~$\Q$, and let~$\mathcal{M}_F$ denote the Hilbert modular scheme associated with~$F$.
Let~$Y\hookrightarrow M_F  := \mathcal{M}_F(\C)$ be a smooth algebraic curve embedded in the complex points of the Hilbert modular variety.
A result of Möller–Viehweg~\cite{MV} implies that, if~$Y$ is affine, it admits an integral model~
$\mathcal{Y}$ defined over~$\mathcal{O}_K[S^{-1}]$,
where~$K$ is a number field, $\mathcal{O}_K$ is its ring of integers, and~$S$ is a finite set of rational primes.
In addition, there is a smooth family of abelian varieties~$\mathcal{X}\to\mathcal{Y}$ with real multiplication by~$F$.
For the sake of exposition, we assume throughout the introduction that~$K=\Q$, in order to deal with polynomials defined over the residue field~$\F_p$. The constructions of the paper work, however, for any field~$K$ and~$p\not\in S$, with~$\F_p$ replaced by the collection of residue fields~$\mathcal{O}_K[S^{-1}]\otimes\F_p$. The existence of generalized Atkin polynomials, related to the positive definiteness of the generalized scalar products, is proven for fields~$K\subset\R$; though it may hold more generally, it is known to fail in some cases~\cite{Ts} (see the beginning of Section~\ref{sec:posdef}).

Assume that~$Y$ is a non-compact curve of genus zero in the Hilbert modular variety~$M_F$
The associated integral model is~
$$
\mathcal{X}\to\mathcal{Y}\simeq\mathrm{spec}\left(\Z[S^{-1}]\left[J,\prod_{i=1}^s(J-J_i)^{-1}\right]\right),$$
for some~$J_i\in\Z[S^{-1}]$.
Let~$p\notin S$ be a prime unramified in $F$ and consider the base change~$\mathcal{X}_{\overline{p}}\to\mathrm{spec}\bigl(\overline{\F}_p[S^{-1}][J,\prod_{i=1}^s(J-J_i)^{-1}]\bigr)$, where we assume that~$J_1,\dots,J_s$ remain distinct over~${\F}_p$. Denote by~$X_{\overline{p},J_0}$ the fiber of~$\mathcal{X}_{\overline{p}}$ over~$J_0\in\overline{\F}_p\smallsetminus\{J_1,\dots,J_s\}$. 
The analogue in this setting of the supersingular polynomial~\eqref{eq:sspintro} is given by
\begin{equation}
\label{eq:nopintro}
\mathrm{no}_\fp(J):=\!\!\!\!\!\prod_{\substack{J_0\in\overline{\F}_p \\ X_{\overline{p},J_0}\,\text{non-ordinary}}}\!\!\!\!\!{(J-J_0)}\,,
\end{equation}
the polynomial describing the non-ordinary locus of~$\mathcal{X}_{\overline{p}}$ in terms of~$J$ (see Section~\ref{sec:Koba} for details). Our first result can be formulated as follows.

\begin{theoremi}[Theorem~\ref{thm:main2}]
\label{thm:intro2}
Let~$Y\simeq\Po/\Gamma\hookrightarrow M_F$ be a non-compact genus-zero curve in a Hilbert modular variety with integral model~$\mathcal{X}\to\mathcal{Y}$ defined over~$\Z[S^{-1}]$. 
There exists a scalar product~$\langle\,,\rangle$ on~$\R[J]$ and, if positive definite, a unique family of monic orthogonal polynomials~$\{A_{n}(J)\}\subset \Q[J]$ with the following property: for every prime~$p\not \in S$ there exists~$n_p\in\Z_{\ge0}$ such that~$A_{n_p}(J)$ has~$p$-integral coefficients and 
\[
A_{n_p}(J)\;\equiv\;\mathrm{no}_p(J)\mod p\,.
\]
\end{theoremi}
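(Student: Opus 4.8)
The plan is to adapt the strategy of Kaneko–Zagier~\cite{KZ}, with the Eisenstein series~$E_{p-1}$ replaced by the lifts of partial Hasse invariants constructed in~\cite{BLpHi}. The first step is to realize~$\mathrm{no}_p(J)$ as the reduction modulo~$p$ of an explicit~$p$-integral polynomial. For each prime~$\fp\mid p$ of~$F$ there is a partial Hasse invariant on~$\mathcal{Y}_{\overline p}$, an abelian variety in the family is non-ordinary precisely when at least one of these vanishes, and their product is the classical Hasse invariant, a section of parallel weight~$p-1$. Since~$Y\simeq\Po/\Gamma$ has genus zero, its function field is generated by the Hauptmodul~$J$, so each lift~$H_\fp$ from~\cite{BLpHi} can be written through~$J$ as a polynomial~$h_\fp(J)\in\Z[S^{-1}][J]$. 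Setting~$h:=\prod_{\fp\mid p}h_\fp$, one checks that the~$\overline h_\fp$ are separable and pairwise coprime over~$\overline\F_p$, whence~$\overline h=\mathrm{no}_p$; the target congruence becomes~$A_{n_p}\equiv\overline h\bmod p$ with~$n_p:=\deg\mathrm{no}_p$.

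Next I would construct the scalar product and its moments. The Kodaira–Spencer isomorphism for~$\mathcal{X}\to\mathcal{Y}$ trivializes the Hodge bundle by a canonical meromorphic differential~$\eta$ on~$Y$, and I define~$\langle f,g\rangle$ as the cuspidal constant term (the geometric analogue of Atkin's product) of~$f(J)\,g(J)\,\eta$. This yields a symmetric bilinear form on~$\R[J]$ whose moments~$\mu_n=\langle J^n,1\rangle$ are read off from the~$q$-expansion at the cusp and lie in~$\Z[S^{-1}]$, hence are~$p$-integral for~$p\notin S$. When~$K\subset\R$ the form is positive definite, and Gram–Schmidt produces the unique monic orthogonal family~$\{A_n(J)\}\subset\Q[J]$.

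The core of the argument is to show that the lifted Hasse invariant satisfies the orthogonality relations modulo~$p$, namely~$\langle h,J^k\rangle\equiv0\bmod p$ for all~$0\le k<n_p$. This is where the defining property of the lifts is used: assembled into the parallel-weight-$(p-1)$ form~$H=\prod_\fp H_\fp$, their product has~$q$-expansion~$\equiv1\bmod p$, so multiplication by~$H$ acts as the identity on~$q$-expansions modulo~$p$ while raising the weight by~$p-1$; in view of this weight shift and the construction of~$\langle\,,\rangle$ from~$\eta$, the required vanishing follows. Granting it, $\overline h$ is a monic polynomial of degree~$n_p$ orthogonal modulo~$p$ to every polynomial of smaller degree. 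Provided the Hankel determinant~$\det(\mu_{i+j})_{0\le i,j<n_p}$ is a~$p$-adic unit—the normality of the moment table at index~$n_p$—the polynomial~$A_{n_p}$ is~$p$-integral, and by uniqueness of monic orthogonal polynomials over~$\F_p$ we conclude~$A_{n_p}\equiv\overline h\equiv\mathrm{no}_p\bmod p$.

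The main obstacle is the orthogonality step for a \emph{product} of partial Hasse invariants rather than a single one: for one prime above~$p$ the computation mirrors~\cite{KZ}, but with several primes of differing residue degrees one must track the combined weight shift and verify that the mixed pairings~$\langle h_{\fp_1}\cdots h_{\fp_r},J^k\rangle$ still collapse modulo~$p$. The secondary difficulty is establishing, uniformly in~$p$, both the separability of~$\mathrm{no}_p$ (needed for the degree count~$n_p=\deg\mathrm{no}_p$) and the non-vanishing of the Hankel determinant at~$n_p$. The latter is exactly the normality of the moment table, and it is the Picard–Fuchs description of the~$\mu_n$—through which the~$A_n$ appear as denominators of Padé approximants to the logarithmic derivative of a period—that is designed to supply it.
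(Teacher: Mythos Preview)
Your outline contains a genuine gap at the step ``one checks that the~$\overline h_\fp$ are separable and pairwise coprime over~$\overline\F_p$, whence~$\overline h=\mathrm{no}_p$.'' Neither claim holds in general. The lifts from~\cite{BLpHi} used in the paper are lifts of~$N$-th powers of the partial Hasse invariants (with~$N$ the lcm of the elliptic orders), so the associated polynomials~$\widetilde h_{p,j}(J)$ are typically \emph{not} separable modulo~$p$; this is precisely why the paper passes to the squarefree part via the gcd in its equation for~$\mathrm{ph}_{\fp,j}$. More seriously, for~$g\ge 2$ the divisors~$D_1,\dots,D_g$ of the partial Hasse invariants on~$\mathcal M_{F,s}$ meet along the superspecial locus, so the polynomials~$\mathrm{ph}_{\fp,j}$ for different~$j$ share common roots and are \emph{not} pairwise coprime; the paper records this explicitly by writing~$\mathrm{no}_p=\mathrm{lcm}(\mathrm{ph}_{p,1},\dots,\mathrm{ph}_{p,g})$ rather than their product. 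Consequently~$\overline h$ has degree strictly larger than~$n_p=\deg\mathrm{no}_p$, and even if you could establish~$\langle\overline h,J^k\rangle\equiv 0\pmod p$ for~$k<n_p$, this would not force~$\overline h\equiv A_{n_p}$, since their degrees differ.

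The paper sidesteps both this and the Hankel-unit issue by never comparing~$\overline h$ to~$A_{n_p}$ directly. Instead it defines the scalar product via~$P:=\sum_j\lambda_jP_j$, where the~$P_j$ are the explicit twisted quasimodular forms~$\Delta'/\Delta-\tfrac{2}{\lambda_j\chi(\Gamma)}B_j'/B_j$, and computes the Serre derivative of the lifted Hasse invariant: from~$h_p(t)\equiv 1\pmod p$ one gets~$\vartheta h_p\equiv -\tfrac{\chi(\Gamma)}{2}P\pmod p$, hence the generating function of moments satisfies
\[
\Phi(t)\;=\;-\frac{P}{J'}\;\equiv\;\mathrm{const}\cdot\frac{\tfrac{d}{dJ}\widetilde h_p(J)}{\widetilde h_p(J)}\pmod p\,.
\]
The logarithmic derivative automatically reduces to a fraction with \emph{squarefree} denominator~$\mathrm{no}_\fp(J)$ and coprime numerator, regardless of the multiplicities in~$\widetilde h_p$. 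Then the Pad\'e characterization of orthogonal polynomials (Part~2 of the proposition on orthogonal polynomials) yields~$A_{n_p}\equiv\mathrm{no}_\fp\pmod\fp$; the Hankel determinant never needs to be shown to be a unit a priori, because the argument clears denominators and uses the degree bound and coprimality to conclude equality. Your plan to supply the Hankel condition via the Picard--Fuchs description is not needed, and your Kodaira--Spencer construction of~$\langle\,,\rangle$ would have to be made to coincide with this specific~$P$ for the Serre-derivative computation to go through.
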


The definition of the scalar product appearing in Theorem~\ref{thm:intro2}, given in Section~\ref{sec:main}, relies on the notion of twisted modular form introduced by Möller and Zagier in~\cite{MZ}, as well as on the algebraic structure of the space of twisted modular forms studied in~\cite{BPf} (see Section~2 for a brief review). Theorem~\ref{thm:intro2} is proven by lifting the Hilbert Hasse invariant to a Hilbert modular form in characteristic~zero, restricting it to a (twisted) modular form on the curve~$Y$, and then relating the zeros of this (twisted) modular form to orthogonal polynomials via Padé approximants to the generating function of moments of the scalar product~$\langle\,,\rangle$, in analogy with Kaneko and Zagier’s proof. Details can be found in Section~\ref{sec:Hasse}.

A few remarks are in order.
First, for~$g=2$, the result can be refined to describe the supersingular locus of~$\mathcal{X}_{\overline{p}}$. In this way, the results of Kaneko and Zagier, as well as their extensions to congruence subgroups, are recovered as special cases by viewing modular curves as Hirzebruch–Zagier curves inside Hilbert modular surfaces (see section \ref{sec:examples}).
Second, Theorem~\ref{thm:intro2} applies to infinitely many new situations—for instance, to all non-compact triangle curves and to all genus-zero Teichmüller curves in Hilbert modular surfaces.
Third, the genus-zero assumption on the base curve~$Y$ is necessary in order to express the entire non-ordinary locus as the zero set of a polynomial in a modular function. In the higher-genus case, one could instead work with polynomials in suitable modular functions on affine charts and then glue the resulting local data together. We did not pursue this approach.

While it is appealing to relate the non-ordinary locus of a curve in a Hilbert modular variety to a single family of orthogonal polynomials, Theorem~\ref{thm:intro2} has one drawback: the number~${n_p}$ appearing in its statement, which is the cardinality of the non-ordinary locus of~$\mathcal{X}_{\overline{p}}$, is at the current stage not directly computable. Upper and lower bounds for $n_p$ can be computed (see Corollary 4 of~\cite{BPf}, and Section 2.4 of~\cite{BLpHi} for explicit computations), but for the scope of Theorem~\ref{thm:intro2} the exact value is needed. 
To overcome this difficulty, we describe each component of the non-ordinary locus of~$\mathcal{X}_{\overline{p}}$ in terms of a family of orthogonal polynomials.

If~$F$ is a totally real field of degree~$g\ge2$, the non-ordinary locus of the Hilbert modular scheme~$\mathcal{M}_{F,s}$ over~$s=\mathrm{Spec}({\overline{\F}_p})$, with~$p$ unramified in~$F$, has~$g$ distinct components~$D_1,\dots,D_g$.
For a curve~$Y\hookrightarrow M_F$ with model over~$\Z[S^{-1}]$ and a prime~$p\not\in S$, let~$\mathcal{X}_{\overline{p}}$ be as before~\eqref{eq:nopintro}. For~$j=1,\dots,g$ the polynomials
\[
\mathrm{ph}_{p,j}(J):=\prod_{\substack{J_0\in\overline{\F}_p \\ X_{\overline{p},J_0}\in D_j}}(J-J_0)\,,\quad j=1,\dots,g\,,
\] 
refine the polynomial~\eqref{eq:nopintro} by describing each component of the non-ordinary locus of~$\mathcal{X}_{\overline{p}}$.
One of our main result expresses these polynomials in terms of orthogonal polynomials.

\begin{theoremi}[Theorem~\ref{thm:main}, Corollary~\ref{cor:main}]
\label{thm:intro1}
Let~$Y\simeq\Po/\Gamma\hookrightarrow M_F$ be a non-compact genus-zero curve in a Hilbert modular variety with integral model~$\mathcal{X}\to\mathcal{Y}$ defined over~$\Z[S^{-1}]$. 
\begin{enumerate}
\item For~$j=1,\dots,g$ there exist a scalar product~$\langle\,,\rangle_j$ on~$\R[J]$ and, if positive definite, a unique family of monic orthogonal polynomials~$\{A_{n,j}(J)\}_n\subset \Q[J]$ with the following property: for every prime~$p\not \in S$ there exists~$n_{p,j}\in\Z_{\ge0}$ such that~$A_{n_{p,j},j}(J)$ has~$p$-integral coefficients and
\[
A_{n_{p,j},j}(J)\;\equiv\;\mathrm{ph}_{p,j}(J)\mod p\,.
\]
The index~$n_{p,j}$ is explicitly computable as the dimension of a space of (twisted) modular forms. 
\item For every prime~$p\not\in S$, and~$n_{p,1},\dots,n_{p,g}\in\Z_{\ge0}$ as above, it holds
\[
\begin{aligned}
\mathrm{lcm}(A_{n_{p,1},1}(J),\dots,A_{n_{p,g},g}(J))&\;\equiv\;\mathrm{no}_p(J)\mod p\,.
\end{aligned}
\]
\end{enumerate}
\end{theoremi}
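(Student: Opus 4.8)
The plan is to run the mechanism behind Theorem~\ref{thm:intro2} once for each of the $g$ partial Hasse invariants and then to assemble the pieces. Fix $j\in\{1,\dots,g\}$. By~\cite{BLpHi} the $j$-th partial Hasse invariant on $\mathcal{M}_{F,s}$ admits a lift to a twisted modular form $H_j$ on $Y$ defined over $\Z[S^{-1}]$ whose reduction modulo $p$ cuts out precisely the intersection $Y\cap D_j$, i.e. the locus where the $j$-th partial Hasse invariant vanishes. Restricting $H_j$ to the genus-zero curve $Y\simeq\Po/\Gamma$ and writing it as a (twisted) modular form in the Hauptmodul $J$, its zero divisor over $\overline{\F}_p$ records exactly the points $J_0$ with $X_{\overline p,J_0}\in D_j$; up to the normalising factors coming from the cusp and the elliptic points of $\Gamma$, the reduction of the restricted $H_j$ is the polynomial $\mathrm{ph}_{p,j}(J)$. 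Following~\cite{MZ,BPf}, I would attach to $H_j$ a scalar product $\langle\,,\rangle_j$ on $\R[J]$ whose moments are the $q$-expansion coefficients of the products $J^nH_j$; this is the analogue, for the single partial Hasse invariant, of the scalar product used in Theorem~\ref{thm:intro2}.

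Assuming $\langle\,,\rangle_j$ is positive definite, the associated monic orthogonal family $\{A_{n,j}\}_n$ exists and is unique, and to identify the relevant member I would reproduce the Kaneko--Zagier argument: the generating function of the moments of $\langle\,,\rangle_j$ is the logarithmic derivative of a solution of the Picard--Fuchs equation attached to $Y$, whose diagonal Padé denominators are, up to normalisation, the $A_{n,j}$. Reducing modulo $p$ and using the $p$-integrality of the relevant $q$-expansions then identifies $A_{n_{p,j},j}(J)$ with $\mathrm{ph}_{p,j}(J)$, where $n_{p,j}=\deg\mathrm{ph}_{p,j}$ is the cardinality of $Y\cap D_j$. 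The improvement over Theorem~\ref{thm:intro2} is that this degree is now computable: since $D_j$ is cut out by a single partial Hasse invariant, whose weight is an explicit linear character in $p$, the valence formula for the restricted twisted modular form $H_j$ expresses $n_{p,j}$ as the dimension of an explicitly described space of twisted modular forms on $Y$, as predicted by the structure theory of~\cite{BPf}.

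For part~(2) the key observation is that each $\mathrm{ph}_{p,j}$ is squarefree by construction and that the non-ordinary locus of $\mathcal{X}_{\overline p}$ is the union $D_1\cup\dots\cup D_g$; thus a point $J_0$ is a root of $\mathrm{no}_p$ exactly when it is a root of some $\mathrm{ph}_{p,j}$, and since all the polynomials involved are squarefree one obtains $\mathrm{lcm}(\mathrm{ph}_{p,1},\dots,\mathrm{ph}_{p,g})=\mathrm{no}_p$ in $\overline{\F}_p[J]$. Combined with the congruences $A_{n_{p,j},j}\equiv\mathrm{ph}_{p,j}$ of part~(1), this already yields the statement at the level of reductions. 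To obtain the congruence for the least common multiple computed over $\Q$ one must further check that reduction commutes with $\mathrm{lcm}$; this holds once the $A_{n_{p,j},j}$ are pairwise coprime, which I would deduce from the fact that the components $D_i$ are divisors, so the intersections $D_i\cap D_j$ have codimension two and meet the curve $Y$ in the expected empty locus, forcing the root sets of the $\mathrm{ph}_{p,j}$---and hence of the $A_{n_{p,j},j}$---to be pairwise disjoint.

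The main obstacle I anticipate is the computability statement in part~(1): one must show that the reduction of the restricted lift $H_j$ has simple zeros concentrated exactly on $Y\cap D_j$, with no spurious zeros at the cusp or at the elliptic points of $\Gamma$, so that the valence count genuinely equals the cardinality of the component and matches the dimension formula. Controlling the order of vanishing of $H_j$ at the boundary of $Y$, where the twisted-modular-form normalisation is most delicate, is the crux of the argument; once this is settled, everything else is a faithful transcription of the proof of Theorem~\ref{thm:intro2} to the refined, partial setting.
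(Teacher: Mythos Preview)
Your outline has the right high-level shape but contains two genuine errors and one muddle.

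\medskip
\textbf{The scalar product is $p$-independent.} You write that you would ``attach to $H_j$ a scalar product $\langle\,,\rangle_j$ whose moments are the $q$-expansion coefficients of the products $J^nH_j$''. But $H_j$ (the paper's $h_{p,j}$) depends on $p$, and a $p$-dependent scalar product would not produce a single family $\{A_{n,j}\}_n$ that works for all primes at once. In the paper the $j$-th scalar product is defined intrinsically via a twisted quasimodular form $P_j$ built from $\Delta'/\Delta$ and $B_j'/B_j$; its generating function of moments is $\Phi_j=-P_j/J'$, which (as you correctly note later) equals the logarithmic derivative of the integral Picard--Fuchs solution $y_j$. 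The lift $h_{p,j}$ only enters when one reduces modulo $p$.

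\medskip
\textbf{The missing mechanism.} The bridge from the $p$-independent $\Phi_j$ to $\mathrm{ph}_{p,j}$ is not just ``$p$-integrality of $q$-expansions''. The key property (Theorem of \S\ref{sec:pHi}, item~3) is that the $t$-expansion of $h_{p,j}$ is identically $1$ modulo $p$. Combined with the Serre derivative one obtains
\[
\Phi_j \;=\; -\frac{P_j}{J'} \;\equiv\; \frac{2}{N\lambda_j\chi(\Gamma)}\,\frac{\vartheta h_{p,j}}{J'\,h_{p,j}} \;\equiv\; \frac{R_{p,j}(J)}{\mathrm{ph}_{\fp,j}(J)} \pmod{\fp},
\]
with $R_{p,j}$ coprime to $\mathrm{ph}_{\fp,j}$. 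It is this congruence of the \emph{whole} generating function with a rational function of known denominator that pins down the Pad\'e denominator modulo $\fp$. Your write-up jumps over this step.

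\medskip
\textbf{Part (2) is wrong as stated.} You argue that the $\mathrm{ph}_{p,j}$ are pairwise coprime because $D_i\cap D_j$ has codimension two and so meets $Y$ in the empty set. This is false: a curve $Y$ in a $g$-dimensional variety can and typically does meet codimension-two strata. The common zeros of the $\mathrm{ph}_{p,j}$ are exactly the superspecial points, and the paper records them in Corollary~\ref{cor:main} via $\mathrm{sp}_p\equiv\gcd(A_{1,n_{p,1}},\dots,A_{g,n_{p,g}})$; the examples in \S\ref{sec:Lane} and \S\ref{sec:Teich} exhibit such points. Consequently your proposed deduction that ``reduction commutes with $\mathrm{lcm}$'' fails. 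In fact no such commutation is needed: the displayed $\mathrm{lcm}$ in the statement is to be taken after reduction, in $\prod_{\fp\mid p}\overline{\F}_p[J]$, and then part~(2) follows immediately from part~(1) and the set-theoretic identity $\mathrm{no}_p=\mathrm{lcm}(\mathrm{ph}_{p,1},\dots,\mathrm{ph}_{p,g})$.

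\medskip
Finally, your ``main obstacle'' (simple zeros of $H_j$) is a non-issue here: the paper allows $\widetilde h_{p,j}$ to have repeated roots and passes to the radical via the $\gcd$ in~\eqref{eq:gcd}; the Pad\'e argument only needs that the resulting numerator and denominator are coprime.
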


Similarly to Theorem~\ref{thm:intro2}, the scalar products are defined in terms of twisted modular forms. 
In Proposition~1 we show that these scalar products satisfy the analogue of the positive definiteness condition whenever $Y$ is defined over~$\R$. This condition is satisfied for instance by all triangle curves and Teichmüller curves in genus two. 

The proof of Theorem~\ref{thm:intro1} is based on the properties of certain special twisted modular forms~$h_{p,1},\dots,h_{p,g}$, which are analogues, for genus-zero curves~$Y\hookrightarrow M_F$, of the lift of the Hasse invariant in the elliptic case.
These twisted modular forms were constructed in~\cite{BLpHi}, and their properties are recalled in Section~\ref{sec:pHi}.
The notions of Serre derivative and quasimodular form in the twisted setting, which are needed in the proof of Theorem~\ref{thm:intro1}, are introduced in Section~\ref{sec:Serre}.
The proof itself, presented in Section~\ref{sec:ssl}, combines these new ingredients with a Padé-type approximation of the generating function of moments of the scalar products, in analogy with the original proof of Kaneko and Zagier~\cite{KZ}.

In Section~\ref{sec:PF} we generalize Theorem~\ref{thm:intro1} by interpreting the Atkin polynomials in terms of solutions of Picard–Fuchs differential equations. To a genus-zero curve in a~$g$-dimensional Hilbert modular variety with a model over~$R=\mathcal{O}_K[S^{-1}]$ are attached~$g$ second-order (Picard–Fuchs) linear differential operators~$L_1,\dots,L_g$.
It was shown in~\cite{BPf} that the normalized holomorphic solution~$y_j(t)=1+O(t)$ of~$L_jv=0$ at a point of maximal unipotent monodromy has a power series expansion with coefficients in~$R$. We prove that Atkin polynomials~$\{A_{j,n}(J)\}_n$ essentially arise as denominators of Padé approximants of the logarithmic derivative of the integral solution~$y_j(t)$.

\begin{theoremi}
\label{thm:noPF}
Let~$Y\simeq\Po/\Gamma\hookrightarrow M_F$ be a genus-zero non-compact curve in a Hilbert modular variety, with integral model~$\mathcal{X}\to\mathcal{Y}$ defined over $\Z[S^{-1}]$. 
Let~$y_j(t)\in 1+\Z[S^{-1}][\![t]\!]$ denote the normalized integral solution of the~$j$-th Picard–Fuchs differential operator of~$Y$, and let~$J:=t^{-1}$. Then, for every prime~$p\not\in S$, the denominator of the~$[n_{p,j}-1,n_{p,j}]$-Padé approximant of the power series
\[
\Phi_{j}(t)\:= t-\frac{2\cdot t^{2}}{\chi(Y)\cdot\lambda_{j}\cdot N}\frac{y_{j}'(t)}{y_{j}(t)}
\]
is congruent to~$\mathrm{ph}_{p,j}(J)$ modulo~$p$, where~$n_{p,j}$ is the degree of~$\mathrm{ph}_{p,j}$. Here~$\chi(Y)$ denotes the orbifold Euler characteristic of~$Y$, and~$\lambda_1,\dots,\lambda_g\in(0,1]\cap\Q$ its Lyapunov exponents, and~$N$ is the least common multiple of the orders of the elliptic points of~$\Gamma$ (set $N=1$ for~$\Gamma$ torsion-free). 

Moreover, the family of denominators of~$[n-1,n]$-Padé approximants of~$\Phi_j(t)$ is the family of Atkin polynomials~$A_{j,n}(J)$, when the latter is defined. 
\end{theoremi}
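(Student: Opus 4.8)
The plan is to prove Theorem~\ref{thm:noPF} by connecting two distinct descriptions of the polynomials $\mathrm{ph}_{p,j}$: the modular-forms description furnished by Theorem~\ref{thm:intro1}, and the Padé-approximation description in terms of the Picard--Fuchs solutions $y_j(t)$. The underlying strategy mirrors Kaneko--Zagier: the key object is the logarithmic derivative of the special twisted modular form $h_{p,j}$ (the lift of the $j$-th partial Hasse invariant from \cite{BLpHi}), whose zeros cut out the component $D_j$ of the non-ordinary locus. First I would express the generating function of moments of the scalar product $\langle\,,\rangle_j$ in closed form as a logarithmic derivative. The crux is to identify this generating function, up to the normalizing constants, with the series $\Phi_j(t)$. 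This requires translating between the modular (weight/automorphy) normalization carried by $h_{p,j}$ and the analytic normalization of the Picard--Fuchs solution $y_j(t)=1+O(t)$; the factor $\tfrac{2}{\chi(Y)\,\lambda_j\,N}$ and the variable change $J=t^{-1}$ are precisely the bookkeeping that makes these two normalizations match.

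Concretely, I would proceed as follows. First, recall from \cite{BPf} and Section~\ref{sec:PF} that the normalized holomorphic solution $y_j(t)$ of the $j$-th Picard--Fuchs operator $L_j$ is, up to the period normalization, a twisted modular form of the relevant weight pulled back to $Y$; its logarithmic derivative $y_j'(t)/y_j(t)$ therefore computes, in the $t=J^{-1}$ coordinate, the logarithmic derivative of a twisted modular form along the curve. Second, I would show that $h_{p,j}$, restricted to $Y$, is (up to normalization and $p$-integral rescaling) a power of $y_j$ times a polynomial in $J$, so that its logarithmic derivative differs from that of $y_j$ by a rational function whose poles encode the elliptic points and cusp. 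The constant $\lambda_j$ enters through the ratio of the weight of the partial Hasse invariant $h_{p,j}$ to the relevant twisted weight --- this is exactly where the Lyapunov exponents appear in the theory of twisted modular forms --- while $\chi(Y)$ and $N$ account for the orbifold structure and the elliptic-point orders. Third, having matched $\Phi_j(t)$ with the moment generating function of $\langle\,,\rangle_j$, I invoke the standard equivalence between orthogonal polynomials and Padé approximants: the denominator of the $[n-1,n]$-Padé approximant of a moment generating function is precisely the degree-$n$ monic orthogonal polynomial for the associated scalar product. This is the content of the classical Padé--orthogonality correspondence, already exploited by Kaneko and Zagier, and it immediately yields the final assertion that the family of Padé denominators coincides with the Atkin polynomials $A_{j,n}(J)$ when these are defined (i.e.\ when $\langle\,,\rangle_j$ is positive definite). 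The mod-$p$ congruence with $\mathrm{ph}_{p,j}$ then follows by combining this identification with Theorem~\ref{thm:intro1}, taking $n=n_{p,j}$.

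The main obstacle I anticipate is the precise determination of the normalizing constant $\tfrac{2}{\chi(Y)\,\lambda_j\,N}$, and more fundamentally the verification that $\Phi_j(t)$ has the correct shape to be a moment generating function in the first place --- that is, that its expansion begins $\Phi_j(t)=t+O(t^2)$ with the linear term normalized to match the monic convention, so that the Padé denominators are genuinely monic orthogonal polynomials rather than scalar multiples thereof. The subtraction of the leading $t$ in the definition of $\Phi_j$ and the quadratic prefactor $t^2$ are designed to enforce this, but confirming that $\Phi_j(t)$ is, term by term, the generating function $\sum_{n\ge 0}\mu_n t^{n+1}$ of the moments $\mu_n=\langle J^n,1\rangle_j$ demands a careful comparison of the $q$-expansion (or $t$-expansion) of $y_j'/y_j$ with the moment integrals. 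The link between the logarithmic derivative of a Picard--Fuchs solution and a moment sequence is the genuinely new geometric input here --- it generalizes Igusa's observation for the Legendre family, where the modular solution makes the identification transparent, to the non-modular setting where only the integral power series $y_j(t)\in 1+\Z[S^{-1}][\![t]\!]$ is available. Once the normalization is pinned down, the remaining steps are formal consequences of the Padé--orthogonality dictionary and of Theorem~\ref{thm:intro1}.
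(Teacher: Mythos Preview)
Your approach is correct in outline but proceeds in the opposite logical direction from the paper. You propose to derive Theorem~\ref{thm:noPF} from Theorem~\ref{thm:intro1}: first identify the series $\Phi_j(t)$ with the moment generating function $-P_j/J'$ of the Atkin scalar product (this identification is indeed carried out in the paper, in Remark~\ref{rmk:PtoA}, via the modularity of the Picard--Fuchs solution $y_j$ and the relation $f_j^{2\lambda_{j,2}}t^{(n-2)\lambda_{j,1}}=\Delta_j^{(n-2)\lambda_{j,1}}$), then invoke the Padé--orthogonality correspondence together with Theorem~\ref{thm:intro1}. The paper instead proves the congruence part of Theorem~\ref{thm:noPF} directly and independently of Theorem~\ref{thm:intro1}, by a characteristic-$p$ argument: Dwork's structure theory for solutions of nilpotent second-order Fuchsian operators over a field of characteristic~$p$ shows that the mod-$\fp$ reduction of $y_j$ factors as a polynomial times an element of $k(t^p)$, so that $y_j'/y_j\bmod\fp$ is a rational function whose denominator, after the change $J=t^{-1}$ and a gcd reduction, equals $\mathrm{ph}_{\fp,j}(J)$ by the Hasse--Witt interpretation of the polynomial solution from~\cite{BPf}. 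The paper's route has the advantage that it does not use modularity at all---it applies in principle to any one-parameter family whose Picard--Fuchs equation has an integral holomorphic solution---and it reverses the implication, deducing Theorem~\ref{thm:intro1} from Theorem~\ref{thm:noPF} rather than conversely. Your route is shorter once Theorem~\ref{thm:intro1} is available, but it forfeits this extra generality, which the paper singles out as the main point of the statement. One minor imprecision: the relevant bridge is between $y_j$ and $\Delta_j$ (through the twisted modular lift $f_j$ of $y_j$), not between $y_j$ and the lift $h_{p,j}$ of the partial Hasse invariant as you suggest.
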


A connection between the Hasse invariant of elliptic curves and solutions of Picard–Fuchs equations was classically discovered by Igusa~\cite{Ig2}, and generalized to partial Hasse invariants of certain Kobayashi geodesics in~\cite{BPf}. In these cases, the partial Hasse polynomial is obtained by~\emph{truncating} the integral solution and reducing it modulo~$p$. As explained in~\cite{BPf}, however, this method works only when the partial Hasse polynomial has degree smaller than~$p$. 
Theorem~\ref{thm:noPF} generalizes the result of Igusa and of~\cite{BPf}, since in those cases the denominator of the Padé approximant matches the truncation of the solution. Moreover, it applies without any restriction on the cardinality of the non-ordinary locus; it permits an easy computation of the partial Hasse polynomials of curves for which no method was known before, and it is well-suited for implementation in common computer algebra systems. More importantly, no assumption on modularity is made. This means that this result can be extended to Picard–Fuchs equations of one-parameter families of abelian varieties that are not necessarily modular (see for example the Picard-Fuchs equations recently studied by van Hoeij, van Straten, and Zudilin~\cite{vHvSZ}, whose modular properties are unclear).  

The proof of Theorem~\ref{thm:noPF} relies on the integrality of solutions of the Picard–Fuchs equations, and on the fact that their modulo~$p$ reduction carry arithmetic information (they are related to entries of the Hasse–Witt matrix of the underlying family of abelian varieties). The relation between denominators of Padé approximants and orthogonal polynomials is classical and briefly recalled in Section~\ref{sec:ortho}. 

Finally, in Section~\ref{sec:examples} we explicitly compute the three-term recurrence for Atkin's polynomials on triangle curves using Gauss's contiguous relations, and we determine the supersingular locus of the curve~$\Po/\Delta(2,5,\infty)$.
We then apply Theorem~\ref{thm:noPF} to compute the non-ordinary locus of a degree-two cover of the Teichmüller curve~$W_{17}$ in the Hilbert modular surface~$\Po^2/\SL(\mathcal{O}_{17}\oplus\mathcal{O}^{\vee}_{17})$; the curve~$W_{17}$ is also defined over~$\Q(\sqrt{17})$.
This locus cannot be obtained using the truncation method described above.
Interestingly, in the cases we computed, the partial Hasse polynomials are self-reciprocal. We conjecture that this property holds in general.

\begin{conjecture*}
For~$j\in\{1,2\}$ and every prime~$p\ne2,17$, the partial Hasse polynomials of the double cover~$\Pi_{17}$ of the Teichmüller curve~$W_{17}\hookrightarrow\Po^2/\SL(\mathcal{O}_{17}\oplus\mathcal{O}^{\vee}_{17})$ satisfy
\[
\mathrm{ph}^{\Pi_{17}}_{p,j}(J)\=\mathrm{ph}^{\Pi_{17}}_{p,j}(J^{-1})\cdot J^{n_{p,j}}\,,
\]
where~$n_{p,j}$ is the degree of~$\mathrm{ph}_{p,j}$. 
\end{conjecture*}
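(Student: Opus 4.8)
The plan is to deduce the self-reciprocity from a single involution $\sigma$ of the genus-zero curve $\Pi_{17}$ acting on its Hauptmodul by $\sigma^\ast J=J^{-1}$ and lifting to an $\OO_F$-linear automorphism of the associated family of abelian varieties, where $F=\Q(\sqrt{17})$. Self-reciprocity of $\mathrm{ph}^{\Pi_{17}}_{p,j}$ is exactly the assertion that its zero set---the locus of $J_0$ with $X_{\overline{p},J_0}\in D_j$---is stable under $J_0\mapsto J_0^{-1}$, so it suffices to produce an isomorphism $X_J\cong X_{J^{-1}}$ of principally polarized abelian varieties with real multiplication that is (a)~defined over $\OO_F[S^{-1}]$ and (b)~$\OO_F$-linear. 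The natural candidate for $\sigma$ is the covering involution of the double cover $\Pi_{17}\to W_{17}$, or an Atkin--Lehner-type involution of the surface $\Po^2/\SL(\OO_{17}\oplus\OO_{17}^\vee)$ stabilizing $W_{17}$. The first task is therefore to pin down this involution explicitly and to verify, in the normalization of $J$ used in the computation, that it acts by $J\mapsto J^{-1}$ rather than by some other M\"obius involution of $\pro^1$.

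The crucial point is condition (b). Since the family $\mathcal{X}\to\Pi_{17}$ is pulled back from the universal family on $M_F$, an isomorphism $X_J\cong X_{J^{-1}}$ can a priori be either $\OO_F$-linear or $\OO_F$-semilinear, i.e.\ twisted by the nontrivial automorphism of $F/\Q$. The two components $D_1,D_2$ of the non-ordinary locus are indexed by the real places of $F$, equivalently by the two factors in the $\OO_F\otimes\F_p$-eigenspace decomposition of the Hasse--Witt data (equivalently of the mod-$p$ Dieudonn\'e module). An $\OO_F$-linear isomorphism preserves this decomposition and hence fixes each $D_j$, whereas a semilinear one swaps $D_1\leftrightarrow D_2$ and would yield only the mutual relation $\mathrm{ph}_{p,1}(J)\sim J^{n_{p,1}}\mathrm{ph}_{p,2}(J^{-1})$ between the two polynomials, not the self-reciprocity of each. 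I would therefore show that $\sigma$ commutes with, rather than conjugates, the real multiplication; this is where the specific geometry of the Teichm\"uller curve enters, as it is the RM-eigenform data defining $W_{17}$ that must be shown invariant under $\sigma$.

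Granting the $\OO_F$-linear isomorphism over $\OO_F[S^{-1}]$, one spreads it out and reduces modulo $\fp$ for every $p\neq2,17$; the excluded primes are the residue characteristic $2$ and the discriminant $17$, precisely where the integral model of the cover degenerates. Reduction preserves the polarization and the $\OO_F$-action, hence the eigencomponents and the Hasse--Witt invariants, so $X_{\overline{p},J_0}\in D_j\iff X_{\overline{p},J_0^{-1}}\in D_j$ and the non-ordinary $J_0\neq\pm1$ pair up under inversion; one checks separately that the fixed points $J_0=\pm1$ of $\sigma$ either avoid the non-ordinary locus or occur with even multiplicity, which yields the asserted identity. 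An equivalent, more computational route uses Theorem~\ref{thm:noPF}: since $\mathrm{ph}_{p,j}$ is, up to normalization, the reciprocal of the Pad\'e denominator of $\Phi_j(t)$, its self-reciprocity is equivalent to that of the Pad\'e denominators, which in turn follows from an appropriate $J\mapsto J^{-1}$ invariance of the moment functional underlying Atkin's scalar product $\langle\,,\rangle_j$---such an invariance makes every associated orthogonal polynomial, and hence each $\mathrm{ph}_{p,j}\equiv A_{n_{p,j},j}$, self-reciprocal. As $\langle\,,\rangle_j$ is defined by integrating twisted modular forms on $\Pi_{17}$, this invariance again expresses the symmetry under the same involution $\sigma$.

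The main obstacle is thus concentrated in the first two paragraphs and is twofold: exhibiting the involution and verifying $\sigma^\ast J=J^{-1}$ requires an explicit model of the double cover that the abstract M\"oller--Viehweg construction does not provide, and proving $\OO_F$-linearity requires controlling how $\sigma$ interacts with the real-multiplication structure---a genuinely geometric input about $W_{17}$ rather than a formal consequence of the setup. Everything downstream (reduction modulo $p$, the passage to each component, and the bookkeeping at $J_0=\pm1$) is routine once this input is in hand, which is consistent with the conjecture having been verified computationally prime by prime without a structural proof.
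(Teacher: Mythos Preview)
The statement is a \emph{conjecture} in the paper, not a theorem: the authors offer no proof and report only numerical verification for all primes $p\le 100$. There is thus nothing in the paper to compare against, and your proposal should be read as a possible attack on an open question, not as a reconstruction of an existing argument.

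Your geometric outline is reasonable, and you correctly isolate the decisive point: whether the involution acts $\OO_F$-linearly (fixing each $D_j$) or $\OO_F$-semilinearly (swapping them). The involution itself is in fact visible in the explicit family~\eqref{eq:W17}: under $(t,X)\mapsto(t^{-1},X/t)$ the right-hand side becomes $t^{-5}$ times itself (the two linear factors swap, the cubic is fixed), so the fibres over $t$ and $t^{-1}$ are quadratic twists by~$t$ and hence isomorphic over~$\overline{\F}_p$; since $J=t^{-1}$ this realises $J\mapsto J^{-1}$. What remains---and what neither you nor the paper supplies---is the verification that the induced map on $H^0(\Omega^1)$ preserves the $\OO_F$-eigenlines rather than exchanging them. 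One further caution on your alternative ``computational route'': an invariance of the Atkin moment functional under $J\mapsto J^{-1}$ would force \emph{every} orthogonal polynomial $A_{j,n}$ to be self-reciprocal, but the paper notes explicitly at the end of Section~\ref{sec:Teich} that the $A_{j,n}$ are \emph{not} palindromic, not even modulo~$p$ when $n\ne n_{p,j}$. The self-reciprocity is therefore a genuinely mod-$p$ phenomenon specific to the degree $n_{p,j}$, and cannot be deduced from a symmetry of the characteristic-zero scalar product; that route, as written, does not go through.
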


The conjecture, verified for all primes~$p\le100$, implies in particular that the supersingular points on~$\Pi_{17}$ occur in pairs, namely~$J_0$ and~$J_0^{-1}$.
This phenomenon also appears for the supersingular (Hasse) polynomial of the Legendre family, as its classical explicit expression shows.
In contrast, it fails for the supersingular polynomial of~$\Po/\SL_2(\Z)$ and for the partial Hasse polynomials associated with~$\Po/\Delta(2,5,\infty)$; a notable difference is that these latter cases contain elliptic points and only one cusp. 

\begin{Ack}
The first author thanks Wadim Zudilin for valuable discussions and encouragement over the years. 
  G.\,Bogo is funded by the Deutsche Forschungsgemeinschaft (DFG, German Research Foundation) — SFB-TRR 358/1 2023 — 491392403.
  Y.\,Li is supported by the Heisenberg Program of the DFG, project number 539345613.
\end{Ack}

\section{Families of abelian varieties with real multiplication}
\label{sec:Koba}
\renewcommand{\thetheorem}{\arabic{theorem}}
\setcounter{theorem}{0}

Let~$F$ be a totally real number field of degree~$g$ over~$\Q$ and denote by~$\mathcal{M}_F\to\mathrm{spec}(\Z)$ the Hilbert modular variety associated to~$F$. It is a (coarse) moduli scheme of isomorphism classes of~$g$-dimensional polarized abelian varieties with real multiplication by~$\mathcal{O}_F$, the ring of integers of~$F$. 
The complex points of~$\mathcal{M}_F$ are the disjoint union of components~$\Po^g/\SL(\mathcal{O}_F\oplus\fa)$, where~$\fa$ runs over the components of the strict class group of~$F$. For the construction of~$\mathcal{M}_F$ and its properties, the reader may consult Chapter~3 of~\cite{GoBook}.  
In the following, we denote by~$M_F$ the complex points of~$\mathcal{M}_F$ and, for~$\overline{\F}_p$ a fixed algebraic closure of~$\F_p$, by~$\mathcal{M}_{F,s}$ the special fiber of~$\mathcal{M}_F$ over~$s=\mathrm{spec}(\overline{\F}_p)$ for~$p$ a prime unramified in~$F$.  

In order to describe the geometry of the special fiber~$\mathcal{M}_{F,s}$ we recall the notion of non-ordinary abelian variety. 
Let~$k$ be a field of characteristic~$p$ and let~$\overline{k}$ be an algebraic closure of~$k$. The~\emph{$p$-rank} of a~$g$-dimensional abelian variety~$A/k$ is the rank of the group of~$p$-torsion points of~$A(\overline{k})$. Recall that the abelian variety~$A/k$ is called~\emph{ordinary} if its~$p$-rank is maximal, i.e., if~$A[p](\overline{k})\simeq(\Z/p\Z)^g$; otherwise it is called~\emph{non-ordinary}. The abelian variety~$A/k$ is~\emph{supersingular} if it is isogenous over~$k$ to a product of~$g$ supersingular elliptic curves; if this isogeny is an isomorphism, one says that~$A/k$ is~\emph{superspecial}. 

The locus of ordinary abelian varieties in~$\mathcal{M}_{F,s}$ at~$s=\mathrm{Spec}({\overline{\F}_p})$ is a dense subscheme, and its complement is called the~\emph{non-ordinary locus}.
The non-ordinary locus in~$\mathcal{M}_{F,s}$ can be described in terms of divisors of certain Hilbert modular forms called~\emph{partial Hasse invariants}.
\begin{theorem*}[Goren~\cite{GHi}, Andreatta-Goren~\cite{AG}]
\begin{enumerate}[wide=0pt]
Let~$F$ as above and~$p$ be a prime unramified in~$F$.
\item The non-ordinary locus in~$\mathcal{M}_{F,s}$ has~$g$ components, denoted by~$D_j$ for~$j=1,\dots,g$. There exist~$g$ Hilbert modular forms~$h_1,\dots,h_g$ of non-parallel weight,  called~\emph{partial Hasse invariants}, such that~$\mathrm{div}(h_j)=D_j$. The partial Hasse invariants can be defined over a suitable finite field~$\F$.
\item For every~$j=1,\dots,g$, the~$q$-expansion of the partial Hasse invariant~$h_j$ at every cusp is constant and equal to~$1$.
\end{enumerate}
\end{theorem*}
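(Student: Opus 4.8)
The plan is to realize each partial Hasse invariant as the section measuring the degeneration of the Verschiebung morphism in a single embedding-direction, and to extract both its divisor and its~$q$-expansion from the Dieudonné theory of the universal object over~$\mathcal{M}_{F,s}$. The case~$g=1$ recovers the classical construction of the Hasse invariant, so the work lies in tracking the~$\OO_F$-action.

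First I would use that~$p$ is unramified in~$F$ to split~$\OO_F\otimes\overline{\F}_p\simeq\prod_{\tau}\overline{\F}_p$, the product running over the~$g$ embeddings~$\tau\colon\OO_F\to\overline{\F}_p$, on which the absolute Frobenius~$\sigma$ acts by~$\tau\mapsto\sigma\circ\tau$ with orbits indexed by the primes~$\fp\mid p$. Let~$\pi\colon\mathcal{A}\to\mathcal{M}_{F,s}$ be the universal abelian scheme and~$\underline{\omega}=\pi_*\Omega^1_{\mathcal{A}/\mathcal{M}_{F,s}}$ its Hodge bundle, locally free of rank one over~$\OO_{\mathcal{M}_{F,s}}\otimes\OO_F$; applying the splitting gives a decomposition into line bundles~$\underline{\omega}=\bigoplus_{\tau}\underline{\omega}_\tau$. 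I would then analyze the Verschiebung~$\mathrm{Ver}\colon\mathcal{A}^{(p)}\to\mathcal{A}$, whose cotangent action is an~$\OO_F$-linear map~$\mathrm{Ver}^*\colon\underline{\omega}\to\underline{\omega}^{(p)}$. Since absolute Frobenius pullback shifts the grading by~$\sigma$ and sends a line bundle to its~$p$-th power, one has~$(\underline{\omega}^{(p)})_\tau\simeq\underline{\omega}_{\sigma^{-1}\tau}^{\otimes p}$, so the~$\tau$-component of~$\mathrm{Ver}^*$ is a morphism~$\underline{\omega}_\tau\to\underline{\omega}_{\sigma^{-1}\tau}^{\otimes p}$, i.e.\ a global section
\[
h_\tau\in H^0\bigl(\mathcal{M}_{F,s},\,\underline{\omega}_\tau^{-1}\otimes\underline{\omega}_{\sigma^{-1}\tau}^{\otimes p}\bigr).
\]
Its weight is~$p$ on the embedding~$\sigma^{-1}\tau$ and~$-1$ on~$\tau$, hence non-parallel; a sanity check is that~$\prod_\tau h_\tau$ has parallel weight~$p-1$, recovering the total Hasse invariant. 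Because the construction only involves the embeddings~$\tau$ and the Frobenius, it descends to a suitable finite field~$\F$. A point is ordinary exactly when every component of~$\mathrm{Ver}^*$ is an isomorphism, so the non-ordinary locus is the union of the~$\{h_\tau=0\}$, and I would set~$D_\tau:=\mathrm{div}(h_\tau)$.

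The step I expect to be the main obstacle is showing that each~$D_\tau$ is a reduced divisor contributing exactly one of the~$g$ components, rather than a non-reduced or redundant locus. For this I would pass to the complete local ring at a non-ordinary point and use Grothendieck--Messing (equivalently Serre--Tate) deformation theory to normalize the Dieudonné crystal of~$\mathcal{A}[p^\infty]$ compatibly with the~$\tau$-grading. Via the Kodaira--Spencer isomorphism~$\Omega^1_{\mathcal{M}_{F,s}}\simeq\bigoplus_\tau\underline{\omega}_\tau^{\otimes 2}$ the deformation parameters are themselves graded by~$\tau$, and the~$\tau$-component of~$\mathrm{Ver}^*$ is governed by the single parameter in that direction; this forces~$h_\tau$ to cut out a smooth, hence reduced, hypersurface transverse to it. Comparing the local equations across Frobenius orbits shows the~$g$ divisors are pairwise distinct, and a dimension count identifies their union with the full non-ordinary locus.

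Finally, for the~$q$-expansions I would evaluate~$h_\tau$ on the Tate object~$\mathrm{Tate}_{\fa,\fb}$ attached to each cusp. Such an object is a quotient of an algebraic torus, so its~$p$-divisible group is of multiplicative type, on which the Verschiebung is an isomorphism in every~$\tau$-direction; trivializing~$\underline{\omega}_\tau$ along the boundary by the canonical invariant differential~$dt/t$, the map~$\mathrm{Ver}^*$ becomes the identity and~$h_\tau$ acquires constant~$q$-expansion equal to~$1$. This simultaneously reconfirms that the cusps are ordinary, consistent with each~$D_\tau$ being supported in the interior, and completes the proposed argument.
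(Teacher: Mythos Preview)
The paper does not contain a proof of this statement: it is quoted as an external result, attributed to Goren~\cite{GHi} and Andreatta--Goren~\cite{AG}, and used as background for the constructions that follow. There is therefore nothing in the paper to compare your proposal against.

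That said, your sketch is a faithful outline of the argument in the cited references. The construction of~$h_\tau$ as the~$\tau$-component of~$\mathrm{Ver}^*$ acting on the graded Hodge bundle, the weight computation, the deformation-theoretic argument for reducedness via Kodaira--Spencer, and the evaluation at the Tate object to obtain the constant~$q$-expansion are all the standard ingredients. If anything, you could tighten the claim that the~$g$ divisors are pairwise distinct and exhaust the components: this requires a bit more than a dimension count, namely an explicit analysis of the Ekedahl--Oort or Newton stratification in the Hilbert case, which is precisely what the cited papers carry out.
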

The partial Hasse invariants are a natural generalization of the Hasse invariant of elliptic curves. Nevertheless, it is important to emphasize that the partial Hasse invariants~$h_i$, if~$\dim{\mathcal{M}_F}>1$ , are genuinely characteristic~$p$ objects, i.e., they do not extend to the integral model $\mathcal{M}_F$.
However, the~\emph{Hasse invariant}~$h$, defined as~$h:=\prod_{j=1}^gh_{j}$, is a Hilbert modular form of parallel weight~$(p-1)$ that admits a lift to a Hilbert modular form in characteristic zero. It follows from its description as a product of partial Hasse invariants that its divisor is supported at the non-ordinary locus of~$\mathcal{M}_{F,s}$ and that its~$q$-expansion is $1$ at every cusp. If~$g=2$ and the field~$F$ has class number~$1$, the Hasse invariant sometimes lifts to a Hilbert Eisenstein series of parallel weight~$(p-1)$ (see Section 14 of~\cite{AG}).

\medskip

We focus now on the theory for curves in Hilbert modular varieties. 
Let~$Y\hookrightarrow M_F$ be a smooth algebraic curve and assume that~$Y$ is affine. A result of Möller and Viehweg (Corollary 6.2 in~\cite{MV}) affirms that $Y\hookrightarrow{M}_F$ can be defined over a number field~$K$. If this is the case, by taking the Zariski closure of~$Y$ in the integral structure of~$\mathcal{M}_F$ along the embedding, one can endow~$Y$ with an integral structure $\mathcal{Y}$ over~$R:=\mathcal{O}_K[S^{-1}]$, where~$\mathcal{O}_K$ is the ring of integers of~$K$ and~$S\subset R$ is a finite set of primes. 
Given the moduli interpretation of~$\mathcal{M}_F$, the integral model is described by a family of polarized abelian varieties~$\mathcal{X}\to\mathcal{Y}$ with real multiplication by~$F$ defined over~$R$. In the following, we assume that if $\fp\in S$, then all its conjugates are in~$S$ or, equivalently, that $S$ comes from a set of rational primes. Then, via the map $\mathrm{Spec}(R)\to\mathrm{Spec}(\Z[S^{-1}])$, we can, and sometimes do, consider $\mathcal{X}$ as a scheme over $\Z[S^{-1}]$. 
In the paper we assume that all the primes that ramify in~$K$ and in~$F$ are in~$S$, and make more specific assumptions later. 

In the case~$Y$ is of genus zero and non compact, the associated integral model is of the form
\begin{equation}
\label{eq:model}
\mathcal{X}\to\spec\biggl(R\biggl[J,\frac{1}{\prod_{i=1}^s(J-J_i)}\biggr]\biggr)\,,
\end{equation}
for some~$J_1,\dots,J_s\in R$.

Fix a rational prime~$p\not\in S$ and an algebraic closure~$\overline{\F}_p$ of~$\F_p$.  Let~$\fp\subset\mathcal{O}_{K}$ be a prime over~$p$ and~$k_{\fp}:=\mathcal{O}_{K}/\fp\mathcal{O}_{K}$ the residue field. 
Consider the base change of the integral model~\eqref{eq:model} to the algebraic closure of the redisue field
\begin{equation}
\label{eq:modelp}
\mathcal{X}_{\bar{\fp}}:=\mathcal{X}\times_{\mathrm{spec}(R[J,\prod_{i=1}^s(J-J_i)^{-1}])}\spec\biggl(\overline{\F}_p\biggl[J,\frac{1}{\prod_{i=1}^s(J-J_i)}\biggr]\biggr)\,,
\end{equation}
where we assume that~$J_1,\dots,J_g$ stay distinct modulo~$\fp$. 
Denote by $\mathcal{X}_{\bar{\fp},J_{0}}$ the fiber of $\mathcal{X}_{\bar{\fp}}$ over $J=J_{0}\in\overline{\F}_p$.  
Since $\mathcal{X}_{\bar{\fp}}$ comes from reduction of a non-trivial family over $R$, for generic~$p$ only finitely many of its points are non-ordinary abelian varieties. Therefore, for such primes~$p$, the non-ordinary, supersingular, and superspecial locus of $\mathcal{X}_{\bar{\fp}}$  can be described via polynomials with coefficients in $\overline{\F}_p$:  
\[
\mathrm{no}_\fp(J):=\!\!\!\!\!\prod_{\overset{J_0\in\overline{\F}_p}{\mathcal{X}_{\bar{\fp},J_0}\text{non-ordinary}}}\!\!\!\!\!{(J-J_0)}\,,\quad
\mathrm{ss}_\fp(J):=\!\!\!\!\!\prod_{\overset{J_0\in\overline{\F}_p}{\mathcal{X}_{\bar{\fp},J_0}\text{supersingular}}}\!\!\!\!\!{(J-J_0)}\,,\quad
\mathrm{sp}_\fp(J):=\!\!\!\!\!\prod_{\overset{J_0\in\overline{\F}_p}{\mathcal{X}_{\bar{\fp},J_0}\text{superspecial}}}\!\!\!\!\!{(J-J_0)}\,.
\]
By means of the identification~$R\otimes\overline{\F}_p\simeq \prod_{\fp|p}\overline{\F}_p$, we define the~\emph{non-ordinary, supersingular}, and \emph{superspecial polynomials} associated to the modulo~$p$ reduction of~$Y$ respectively by 
\[
\mathrm{no}_p(J):=\bigl(\mathrm{no}_\fp(J)\bigr)_{\fp|p}\,,\quad \mathrm{ss}_p(J):=\bigl(\mathrm{ss}_\fp(J)\bigr)_{\fp|p}\,,\quad \mathrm{sp}_p(J):=\bigl(\mathrm{sp}_\fp(J)\bigr)_{\fp|p}\quad\in \prod_{{\fp|p}}\overline{\F}_p[J]\,.
\]

As recalled above, the non-ordinary locus in~$\mathcal{M}_{F,s}$ is described by the union of divisors~$D_1,\dots,D_g$ of the partial Hasse invariants. For~$\fp\subset\mathcal{O}_K$ a prime ideal over~$p$, we refine the polynomial~$\mathrm{no}_{\fp}(J)$ by considering the fibers of the family~$\mathcal{X}_{\bar{\fp}}$ that lie on the divisor~$D_j$
\[
\mathrm{ph}_{\fp,j}(J):=\prod_{\overset{J_0\in\overline{\F}_p}{\mathcal{X}_{\bar{\fp},J_0}\in D_j}}(J-J_0)\,,\quad j=1,\dots,g\,.
\]
For~$j=1,\dots,g$, the~\emph{partial Hasse polynomial} 
\begin{equation}
\label{eq:phpdef}
\mathrm{ph}_{p,j}(J)\:=\bigl(\mathrm{ph}_{\fp,j}(J)\bigr)_{\fp|p}\quad\in \prod_{\fp|p}\overline{\F}_p[J]\,,
\end{equation}
is the polynomial describing the intersection of~$\mathcal{X}_{\bar{\fp}}$ with the divisor~$D_j$ of the partial Hasse invariant~$h_j$.

\section{Twisted modular forms on genus zero curves}
\label{sec:tmf0}

Let~$F\subset \R$ be a totally real number field of degree~$g$ over~$\Q$, and let~$\sigma_1,\dots,\sigma_g\colon F\to\R$ denote the real embeddings, where~$\sigma_1$ is the identity embedding. 
Let~$Y\simeq\Po/\Gamma\hookrightarrow M_F$ be a smooth algebraic curve in a Hilbert modular variety. If~$Y$ is irreducible, it is mapped into a component~$\Po^g/\SL(\mathcal{O}_F\oplus\fa)$ of~$M_F$, where~$\fa$ is a fractional ideal of~$\mathcal{O}_F$, and the embedding has the following description in terms of universal covering spaces. There exists an inclusion of groups~$\Gamma\hookrightarrow\SL(\mathcal{O}_F\oplus\fa)$ and a collection~$\varphi=(\varphi_1,\dots,\varphi_g)$ of holomorphic maps~$\varphi_j\colon\Po\to\Po$, called~\emph{modular embedding}, such that for every~$\gamma\in\Gamma$ it holds
\begin{equation}
\label{eq:modem}
\varphi_j(\gamma\tau)\=\gamma^{\sigma_j}\varphi_j(\tau)\,,\quad j=1,\dots,g\,,
\end{equation}
where~$\gamma^{\sigma_j}$ denotes the image of~$\gamma\in\Gamma\subset\SL(\mathcal{O}_F\oplus\fa)$ via the real embedding~$\sigma_j$ (applied entry-wise), and~$\Gamma$ acts by Möbius transformations both on domain and range. It is possible to normalize~$\varphi_1(\tau)=\tau$, and we do so in the following.

Let~$\vec{k}=(k_1,\dots,k_g)\in\Q^g$. A~\emph{twisted modular form} of weight~$\vec{k}$ and multiplier system~$v\colon\Gamma\to\C^\times$ is a holomorphic function~$f\colon\Po\to\C$ such that, for every~$\gamma=\left(\begin{smallmatrix}a & b\\c & d\end{smallmatrix}\right)\in\Gamma$, it holds
\[
f(\gamma\tau)\=v(\gamma)\cdot f(\tau)\cdot\prod_{j=1}^g(\sigma_j(c)\varphi_j(\tau)+\sigma_j(d))^{k_j}\,,
\]
where we take the principal value for the branch of the rational powers. 
Growth conditions at the cusps of~$\Gamma$ are also imposed; see Section 1.1.4 of~\cite{BPf}. Denote by~$M_{\vec{k}}(\Gamma,\varphi,v)$ the space of twisted modular forms on~$(\Gamma,\varphi)$, where~$\varphi$ is the modular embedding, with weight~$\vec{k}$ and multiplier system~$v$. 

\begin{example}
Classical modular forms of weight~$k$ are twisted modular forms of weight~$\vec{k}=(k,0,\dots,0)$. A non-classical example is given by the derivative~$\tfrac{d}{d\tau}\varphi_j(\tau)$ of a component of the modular embedding; it is a twisted modular form of weight~$(2,0,\dots,0,-2,0,\dots,0)$, where~$-2$ is in the~$j$-th entry. The modular property is easily deduced from~\eqref{eq:modem}, and the growth at the cusp has been studied in Section 2 of~\cite{MZ}. Further non-trivial examples are given in Section~\ref{sec:pHi}.
\end{example}

While twisted modular forms admit Fourier expansion at the cusps, in general the Fourier coefficients do not have nice~$p$-integrality properties. This is because the groups~$\Gamma$ admitting a modular embedding are in general not arithmetic groups. We consider instead another expansion at the cusp, which is more useful for our purposes. 

Assume that~$\Gamma$ has genus zero and let~$t$ be a Hauptmodul realizing the isomorphism~$\Po/\Gamma\simeq Y$ of the quotient space with a punctured sphere.
The punctures are the images, via~$t$, of the cusps and elliptic points of~$\Gamma$. The result of Möller–Viehweg (Corollary 6.2 in~\cite{MV})  mentioned in Section~\ref{sec:Koba} implies that the set of punctures can be defined over a number field~$K$. Moreover, if $Y$ admits an integral model $\mathcal{Y}$ over~$R=\mathcal{O}_K[S^{-1}]$ one can, up to enlarging $S$, assume that the values of~$t$ at the cusps and elliptic points are defined over~$R$. Normalize~$t$ further to have a (simple) zero at the cusp~$i\infty$; every twisted modular form~$f$ can then be expressed locally near~$i\infty$ as a power series in~$t$. We call this the~\emph{$t$-expansion} of~$f$ at~$i\infty$.

\begin{theorem}[Proposition 3 and Theorem 2 in~\cite{BPf}, Theorem 1 in~\cite{BLpHi}]
\label{thm:Bj}
Let~$\Gamma$ be a genus-zero group with modular embedding~$\varphi$, and let~$t$ be the Hauptmodul introduced above. 
\begin{enumerate}
\item There exist twisted modular forms~$B_2,\dots,B_g$, with non-trivial multiplier system, that have no zeros in~$\Po\cup\{\infty\}$.
 For $j\ge2$, the weight of~$B_j$ is~$(-\lambda_j,0,\dots,0,1,0,\dots,0)$, where~$1$ is in the~$j$-th entry, and~$\lambda_j\in(0,1]\cap\Q$ is defined by
\begin{equation}
\label{eq:Lyap}
\deg\mathrm{div}(\varphi_j')\=(1-\lambda_j)\chi(\Gamma)\,,
\end{equation}
where~$\chi(\Gamma)$ denotes the orbifold Euler characteristic of~$\Gamma$. Moreover, $B_2,\dots,B_g$ can be normalized to have~$p$-integral $t$-expansion at the cusp~$i\infty$ for every prime~$p\not\in S$. 
\item If~$\vec{k}=(k_1,\dots,k_g)\in\Q\times\Z^{g-1}$ and~$f\in M_{\vec{k}}(\Gamma,\varphi,v)$, there exists a classical modular form~$f_1$ of rational weight and non-trivial multiplier system such that~$f=f_1\cdot\prod_{j=2}^gB_j^{k_j}$.
\item Let~$\vec{k}=(k_1,\dots,k_g)\in\Q\times\Z^{g-1}$. For every multiplier system~$v$, the space~$M_{\vec{k}}(\Gamma,\varphi,v)$ admits a basis of twisted modular forms whose $t$-expansion at~$i\infty$ has algebraic~$p$-integral coefficients for every prime~$p\not\in S$. 
\end{enumerate}
\end{theorem}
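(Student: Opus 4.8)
The plan is to construct the forms $B_j$ explicitly from the modular embedding and then deduce the structural statements (2) and (3) formally from their nowhere-vanishing property. For (1), I would begin with the derivative $\varphi_j'$ of the $j$-th component of the modular embedding. As in the Example, the cocycle relation~\eqref{eq:modem} shows that $\varphi_j'$ is a twisted modular form of weight $(2,0,\dots,-2,\dots,0)$, with the $-2$ in position~$j$. Regarding it as a meromorphic section of the associated line bundle on the compactified curve $\overline{\Po/\Gamma}$, I would compute the degree of its divisor; by definition~\eqref{eq:Lyap} this degree is $(1-\lambda_j)\chi(\Gamma)$, so that $\lambda_j$ records the ramification of $\varphi_j$ at the elliptic points and cusps. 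To reach weight $(-\lambda_j,0,\dots,1,\dots,0)$ I would set $B_j:=g_j\cdot(\varphi_j')^{-1/2}$: the half-integral power changes the $j$-th weight from $-2$ to $+1$ and forces a non-trivial multiplier system, while $g_j$ is a classical modular form of weight $1-\lambda_j$ chosen so that $\mathrm{div}(g_j)=\tfrac12\,\mathrm{div}(\varphi_j')$. The point of~\eqref{eq:Lyap} is precisely that $\lambda_j$ is calibrated to make such a $g_j$ exist: on a genus-zero curve a form with prescribed divisor of the correct total degree can be written down in terms of the Hauptmodul~$t$ and the elliptic-point data, and its zeros then cancel the poles introduced by $(\varphi_j')^{-1/2}$, leaving $B_j$ holomorphic and nowhere vanishing on $\Po\cup\{\infty\}$. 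Since $B_j$ does not vanish at the cusp, its $t$-expansion has nonzero constant term; normalizing this term to $1$ and invoking the $p$-integrality of the $t$-expansions of $t$ and of the classical forms involved—guaranteed for all $p\notin S$ by the Möller--Viehweg integral model over $R$—yields the asserted $p$-integral normalization.

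For (2), given $f\in M_{\vec{k}}(\Gamma,\varphi,v)$ with $\vec{k}\in\Q\times\Z^{g-1}$, I would divide and set $f_1:=f\cdot\prod_{j=2}^g B_j^{-k_j}$. Because each $B_j$ is holomorphic and nowhere vanishing on $\Po\cup\{\infty\}$ and the exponents $k_j$ are integers, $f_1$ is again a holomorphic twisted modular form with no new zeros or poles; adding weights entry by entry shows that every entry except the first vanishes, so $f_1$ has weight $(k_1+\sum_{j\ge2}\lambda_j k_j,0,\dots,0)$. Thus $f_1$ is classical of rational weight, with multiplier system the product of $v$ with those of the $B_j^{-k_j}$, and $f=f_1\prod_{j=2}^gB_j^{k_j}$ as claimed.

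For (3), I would combine (1) and (2). By (2) every element of $M_{\vec{k}}(\Gamma,\varphi,v)$ equals $f_1\prod_{j\ge2}B_j^{k_j}$ for $f_1$ ranging over classical modular forms of a fixed rational weight and multiplier. On a genus-zero curve this classical space admits a basis with $p$-integral $t$-expansions for every $p\notin S$: fixing one nonzero form of the correct weight, all others arise by multiplication by polynomials in $t$, whose coefficients are $p$-integral thanks to the integral model. Multiplying this basis by the single $p$-integral factor $\prod_{j\ge2}B_j^{k_j}$ from (1) produces the desired $p$-integral basis of the twisted space.

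The main obstacle is (1): steps (2) and (3) are essentially formal once the $B_j$ are available, but exhibiting a single twisted form that is simultaneously of the exact weight dictated by the Lyapunov exponent, nowhere vanishing on $\Po\cup\{\infty\}$, and $p$-integrally normalized for all $p\notin S$ requires both the precise valence bookkeeping linking $\mathrm{div}(\varphi_j')$ to $\chi(\Gamma)$ and $\lambda_j$, and genuine control of the arithmetic of the $t$-expansions. This is exactly where the constructions of~\cite{BPf} and~\cite{BLpHi} do the real work.
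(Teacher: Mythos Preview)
The paper does not give its own proof of this theorem; it is stated as a citation of Proposition~3 and Theorem~2 of~\cite{BPf} together with Theorem~1 of~\cite{BLpHi}, with no argument supplied here. So there is no in-paper proof to compare against, and you correctly recognize this at the end of your proposal when you say that the constructions of~\cite{BPf} and~\cite{BLpHi} ``do the real work.''

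Your sketch is nonetheless a reasonable outline of how those references proceed. Parts~(2) and~(3) are, as you say, formal once~(1) is in hand: dividing by $\prod_{j\ge2}B_j^{k_j}$ strips the twisted weights down to a classical form of weight $k_1+\sum_{j\ge2}\lambda_jk_j$ (this is exactly the weight computed in the proof of Lemma~\ref{lem:deco}), and the $p$-integral basis of the classical space built from $\Delta$, the $Q_i$, and polynomials in $J$ then transports back. Your proposed construction $B_j=g_j\cdot(\varphi_j')^{-1/2}$ is in the right spirit, but two points deserve more care than you give them. First, extracting $(\varphi_j')^{-1/2}$ forces either that $\mathrm{div}(\varphi_j')$ be even or that one absorb the obstruction into the multiplier system; this is why the $B_j$ come with a non-trivial multiplier, but it is not automatic and needs the explicit description of the zeros of $\varphi_j'$ at cusps and elliptic points. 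Second, the $p$-integrality of the $t$-expansion of $B_j$ is the genuinely delicate assertion: it does not follow formally from the existence of the integral model, since the construction involves fractional powers and inversion, and in~\cite{BLpHi} it is established via the arithmetic of the Picard--Fuchs equations rather than by soft arguments. You are right to single out~(1) as the substantive step.
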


\begin{remark}
The last statement in the theorem holds more generally for the expansion at every cusp, after a suitable choice of Hauptmodul. 
\end{remark}

Part 2 of the theorem states that twisted modular forms on genus-zero groups can be studied in terms of classical modular forms and of the twisted modular forms~$B_2,\dots,B_g$. In the case~$\Gamma$ has genus zero, the space of modular forms can be constructed explicitly in terms of a Hauptmodul; this is done in the proof of Theorem~2 in~\cite{BPf} and briefly recalled here. 

Assume that~$\Gamma$ has a cusp at~$i\infty$, let~$t$ a Hauptmodul as above, and let~$J:=\tfrac{1}{t}$ be a Hauptmodul with a simple pole at~$i\infty$. There exists a modular form~$\Delta=\Delta_{i\infty}$, of rational weight and possibly non-trivial multiplier system, such that
\begin{equation}
\label{eq:delta}
\mathrm{div}(\Delta)\=1\cdot i\infty\,,\quad \mathrm{weight}(\Delta)=\frac{-2}{\chi(\Gamma)}\,,
\end{equation}
and with~$p$-integral~$t$-expansion at the cusps for almost every prime~$p\not\in S$. 
If~$\Gamma$ is torsion-free, then it has~$s\ge3$ cusps~$c_1,\dots,c_{s-1},i\infty$, and~$\Delta$ can be defined as~$\Delta:=\bigl(J'/\prod_{l=1}^{s-1}(J(c_l)-J)\bigr)^{1/(s-2)}$, and every modular form is a positive power of~$\Delta$ times a polynomial in~$J$.
If~$\Gamma$ has~$r\ge1$ elliptic points~$e_1,\dots,e_r$, there exist classical modular forms~$Q_1,\dots,Q_r$, of rational weight and non-trivial multiplier system, with divisor~$\mathrm{div}(Q_i)=\frac{1}{n_i}\cdot e_i$, where~$n_i$ is the order of the elliptic point~$e_i$. They can be defined in terms of~$J$ and~$J'$ by slightly more complicated formulae than in the torsion-free case (see the proof of Theorem 2 in~\cite{BPf}). If~$\Gamma$ has a modular embedding, these modular forms can be chosen to have~$p$-integral~$t$-expansion at the cusps for every prime~$p\not\in S$, and be normalized to satisfy
\begin{equation}
\label{eq:Qi}
Q_i^{n_i}\=\frac{J-J(e_i)}{J-J(e_l)}\cdot Q_l^{n_l}\,,\quad i,l\in\{1,\dots,r\}\,.
\end{equation}
The ring of modular forms~$M_*(\Gamma)$ is generated by polynomials in~$Q_1,\dots,Q_r$, and we define the modular form~$\Delta$ by~$\Delta\cdot (J-J(e_1))=Q_1^{n_1}$.

\begin{lemma}
\label{lem:deco}
Let~$\Gamma$ be a genus-zero group with modular embedding~$\varphi$ and~$s\ge1$ cusps and~$r\ge0$ elliptic points~$e_1,\dots,e_r$ of order~$n_1,\dots,n_r$ respectively. Let ~$J$ be a Hauptmodul with a pole at~$i\infty$, and let~$\Delta$ and~$Q_1,\dots,Q_r$ be the modular forms introduced above, and let~$B_2,\dots,B_g$ be the twisted modular forms in the theorem. Let~$\vec{k}=(k_1,\dots,k_g)\in\Q\times\Z^{g-1}$ and~$v$ be a multiplier system. Then every twisted modular form~$f\in M_{\vec{k}}(\Gamma,\varphi,v)$ can be written uniquely as
\begin{equation}
\label{eq:deco}
f\=\Delta^{m}\cdot\prod_{i=1}^r{Q_i^{\epsilon_i}}\cdot \prod_{j=2}^g B_j^{k_j}\cdot\widetilde{f}(J)\,,
\end{equation}
where~$m\in\Z_{\ge0}$, $\epsilon_i\in\{0,1,\dots,n_i-1\}$, and~$\widetilde{f}(J) \in \C[J]$ is a polynomial of degree~$\le m$. 

Moreover, if the expansion at~$i\infty$ of~$f$ in the Hauptmodul~$t=1/J$ is~$p$-integral for a prime~$p$, then the polynomial~$\widetilde{f}(J)$ has~$p$-integral coefficients.
\end{lemma}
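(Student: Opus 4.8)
The plan is to reduce the statement to the structure theory of classical modular forms on $\Gamma$ recalled before the lemma, using Theorem~\ref{thm:Bj}(2) to strip off the twisted factors $B_2,\dots,B_g$. Since $\vec{k}\in\Q\times\Z^{g-1}$, that result writes $f=f_1\cdot\prod_{j=2}^g B_j^{k_j}$ with $f_1$ a classical modular form of rational weight $w=k_1+\sum_{j=2}^g\lambda_j k_j$ and some multiplier system; as the $B_j$ and the exponents $k_j$ are fixed, $f_1$ is uniquely determined by $f$. It therefore suffices to establish the decomposition $f_1=\Delta^m\prod_{i=1}^r Q_i^{\epsilon_i}\cdot\widetilde{f}(J)$ with $m\in\Z_{\ge0}$, $\epsilon_i\in\{0,\dots,n_i-1\}$, and $\deg\widetilde{f}\le m$.

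First I would fix the exponents at the elliptic points. Writing $\mathrm{ord}_{e_i}(f_1)=a_i/n_i$ with $a_i\in\Z_{\ge0}$ for the vanishing order measured in the local coordinate $J-J(e_i)$ on the coarse space, I define $\epsilon_i\in\{0,\dots,n_i-1\}$ as the residue of $a_i$ modulo $n_i$. Since $\mathrm{div}(Q_i)=\tfrac{1}{n_i}e_i$, the form $g:=f_1\prod_i Q_i^{-\epsilon_i}$ is then holomorphic with integer order at every $e_i$, and its multiplier system is trivialized there. Using the relations $Q_i^{n_i}=\tfrac{J-J(e_i)}{J-J(e_l)}Q_l^{n_l}$ and $\Delta\cdot(J-J(e_1))=Q_1^{n_1}$ (so that $\mathrm{weight}(Q_i)=\tfrac{-2}{n_i\chi(\Gamma)}$ for all $i$), the weight of $g$ equals $m\cdot\mathrm{weight}(\Delta)$ with $m=-\tfrac{w\chi(\Gamma)}{2}-\sum_i\tfrac{\epsilon_i}{n_i}$; the choice of the $\epsilon_i$ is exactly what forces $m\in\Z_{\ge0}$ and makes the remaining multiplier trivial. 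Consequently $g/\Delta^m$ is a weight-zero form with trivial multiplier, i.e.\ a meromorphic $\Gamma$-invariant function, hence (as $\Po/\Gamma$ has genus zero with Hauptmodul $J$) a rational function $\widetilde{f}(J)\in\C(J)$.

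To see that $\widetilde{f}$ is a polynomial of degree $\le m$, I would run a divisor count: $g$ is holomorphic on $\Po$ and at every cusp, while $\mathrm{div}(\Delta)=1\cdot i\infty$, so $\Delta$ is non-vanishing away from $i\infty$; hence $\widetilde{f}=g/\Delta^m$ has no finite poles and its only possible pole is at $J=\infty$. Reading orders at $i\infty$ in the parameter $t=1/J$, holomorphy of $g$ gives $\mathrm{ord}_{i\infty}(g)\ge0$, so $\widetilde{f}$ has a pole of order at most $m$ there, i.e.\ $\deg\widetilde{f}\le m$. Uniqueness is then immediate: the $\epsilon_i$ are forced as residues of the elliptic orders of $f_1$, the integer $m$ is forced by the weight, and $\widetilde{f}$ is forced as the resulting quotient.

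For the integrality claim I would use the normalizations of Theorem~\ref{thm:Bj}(1) and the surrounding discussion: $\Delta$, $Q_1,\dots,Q_r$ and $B_2,\dots,B_g$ all have $p$-integral $t$-expansions at $i\infty$, and since the $Q_i$ and $B_j$ are non-vanishing at $i\infty$ (and $\Delta=c\,t(1+O(t))$ with $c$ a $p$-unit), the product $\Delta^{-m}\prod_i Q_i^{-\epsilon_i}\prod_{j=2}^g B_j^{-k_j}$ is a $p$-integral Laurent series with a pole of order $m$ at $t=0$. Multiplying the (by hypothesis $p$-integral) $t$-expansion of $f$ by this series yields the Laurent polynomial $\widetilde{f}(1/t)=\sum_{l=0}^{\deg\widetilde{f}}c_l t^{-l}$, whose coefficients $c_l$ are therefore $p$-integral. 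I expect the main obstacle to be the bookkeeping at the elliptic points: making precise the notion of fractional vanishing order attached to the multiplier system, and checking that the residues $\epsilon_i$ simultaneously trivialize the multiplier and force $m$ to be a non-negative integer, so that $g/\Delta^m$ is genuinely a rational function of $J$ rather than merely a form with small divisor.
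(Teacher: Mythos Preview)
Your proposal is correct and follows essentially the same route as the paper: strip off the $B_j^{k_j}$ via Theorem~\ref{thm:Bj}(2), read off the residues $\epsilon_i$ from the fractional vanishing orders of $f_1$ at the elliptic points, and conclude that the remaining factor is $\Delta^m$ times a polynomial in $J$ of degree $\le m$, with $p$-integrality inherited from that of all the building blocks. The only organizational difference is that the paper, after peeling off $\prod_i Q_i^{\epsilon_i}$, explicitly separates the integer-order contributions $\prod_i Q_i^{b_i n_i}$ and rewrites them via the relations $Q_i^{n_i}=\Delta\cdot(J-J(e_i))$ before absorbing everything into $\Delta^m\widetilde{f}(J)$, whereas you divide by $\Delta^m$ in one step and argue directly that the quotient is a polynomial in $J$; both arrive at the same decomposition.
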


\begin{proof}
From Part 2 of Theorem~\ref{thm:Bj}, there exists a modular form~$f_1$ (of weight~$\sum_{j=1}^g{\lambda_jk_j}$) on~$\Gamma$ such that~$f=f_1\prod_{j=2}^gB_j^{k_j}$. Notice that~$\mathrm{div}(f)=\mathrm{div}(f_1)$ because of Part 1 of~Theorem~\ref{thm:Bj}. If~$\Gamma$ is torsion-free, it follows from the above discussion that~$f_1$ is a power of~$\Delta$ times a polynomial in~$J$, and we are done. Assume then that~$\Gamma$ has~$r\ge1$ elliptic points. Write
\[
\mathrm{div}(f_1)\=\sum_{i=1}^r{\frac{a_i}{n_i}\cdot e_i}\+\sum_{\tau\in\overline{\Po/\Gamma}\setminus\{e_1,\dots,e_r\}}{\mathrm{ord}_\tau{(f_1)}\cdot\tau}\,
\]
where~$a_i=b_i\cdot n_i+\epsilon_i$ uniquely for~$b_i\in\Z$ and~$\epsilon_i\in\{0,\dots,n_i-1\}$.
It follows that the divisor of~$f_1\prod_{i=1}^r{Q_i^{-\epsilon_i}}$ is effective, since~$f$ is holomorphic, and of integral degree~$m\in\Z_{\ge0}$. It follows that 
\[
f_1\prod_{i=1}^rQ_i^{-\epsilon_i}\=\prod_{i=1}^rQ_i^{b_in_i}\cdot f_2\,,
\]
where~$f_2$ is a modular form with zeros outside the elliptic locus, and therefore~$f_2=\Delta^{m_2}\widetilde{f}_2(J)$ is a power of~$\Delta$ times a polynomial in~$J$. From the relations~$\Delta\cdot(J(e_i)-J)=Q_1^{n_1}$ and~\eqref{eq:Qi} it follows that
\[
f_1\prod_{i=1}^rQ_i^{-\epsilon_i}\=\prod_{i=1}^rQ_i^{b_in_i}\cdot f_2\=\Delta^{\sum_{i=1}^rb_i+m_2}\cdot\prod_{i=1}^r{(J(e_i)-J)^{b_i}}\cdot\widetilde{f}_2(J)\,.
\]
The~$p$-integrality for almost every~$p\not\in S$ of the coefficients of~$\widetilde{f}$ follows from the~$p$-integrality of the~$t$-expansion at~$i\infty$ of all the twisted modular forms appearing in the product~\eqref{eq:deco}.
\end{proof}

\subsection{Partial Hasse invariants}
\label{sec:pHi}

Let~$p$ be a prime unramified in~$F$. In Section~\ref{sec:Koba} we mentioned that the non-ordinary locus in~$\mathcal{M}_{F,s}$ over~$s=\mathrm{Spec}(\overline{\F}_p)$ is described by divisors of certain Hilbert modular forms called partial Hasse invariants. The non-ordinary locus of the family~\eqref{eq:modelp} induced by a curve in~$M_f$ is then described by the restriction of the partial Hasse invariants to the base curve. The main result of~\cite{BLpHi} is that, contrary to the Hilbert case, (powers of) restrictions of partial Hasse invariants to genus zero curves lift to twisted modular forms in characteristic zero. 
The properties of the lifts that are of interest for this paper are recalled in the next theorem. 

\begin{theorem*}[Theorem 1 in~\cite{BLpHi}]
\label{thm:pHi}
Let~$Y\simeq\Po/\Gamma\hookrightarrow M_F$ have an integral model over~$R=\mathcal{O}_K[S^{-1}]$, and let~$p\not\in S$ be a prime.
Denote by~$N$ the least common multiple of the orders of the elliptic points of~$\Gamma$. There exist~$g$ twisted modular forms~$h_{p,1},\dots,h_{p,g}$ on~$\Gamma$ of trivial multiplier system, such that:
\begin{enumerate}
\item For every~$j=1,\dots,g$, the twisted modular form~$h_{p,j}(\tau)$ has~$p$-integral~$t$-expansion at the cusps. The weight of~$h_{p,j}$ is~$(0,\dots,0,-N,0,\dots,0,pN,0,\dots,0)$, where~$-N$ is in the~$j$-th entry, $pN$ in the~$j'$-entry. The value of~$j'\in\{1,\dots,g\}$ is uniquely determined by~$j$ and by the splitting behavior of~$p$ in~$F$. 
\item The twisted modular form~$h_{p,j}(\tau)$ has product decomposition
\[
h_{p,j}(\tau)=\Delta(\tau)^{m_{p,j}}B_{j}^{-N}B_{j'}^{pN}\cdot\widetilde{h}_{p,j}(J)\,,
\]
where~$m_{p,j}=\frac{\chi(Y)}{2}\cdot N\cdot (\lambda_j-p\lambda_{j'})$ with~$\lambda_j$ as in~\eqref{eq:Lyap}, and we set~$B_1=1$.
Moreover, the polynomial~$\widetilde{h}_{p,j}(J)$ is defined over~$R$ and, for every prime~$\fp\subset\mathcal{O}_K$ above~$p$, it satisfies 
\begin{equation}
\label{eq:gcd}
\frac{\widetilde{h}_{p,j}(J)}{\gcd\bigl(\widetilde{h}_{p,j}(J),\tfrac{d}{dJ}\widetilde{h}_{p,j}(J)\bigr)}\;\equiv\;\mathrm{ph}_{\fp,j}(J) \mod \fp\,,
\end{equation}
where~$\mathrm{ph}_{\fp,j}(J)$ is the~$j$-th partial Hasse polynomial defined in Section~\ref{sec:Koba}. 
\item The~$t$-expansion of~$h_{p,j}(\tau)$ at~$i\infty$ is constant and equal to~$1$ modulo~$p$.
\end{enumerate}
\end{theorem*}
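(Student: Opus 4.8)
The plan is to realize each $h_{p,j}$ as the characteristic-zero lift of a restricted Andreatta--Goren partial Hasse invariant, the lift being produced through the structure theory of twisted modular forms on the genus-zero group $\Gamma$. First I would recall from~\cite{AG,GHi} that on $\mathcal{M}_{F,s}$ the partial Hasse invariant $h_j$ is a section of the automorphic line bundle of weight $p\,\sigma_{j'}-\sigma_j$, where $\sigma_{j'}=\mathrm{Frob}\circ\sigma_j$ is the Frobenius-shift of $\sigma_j$ (so $j'$ is determined by $j$ and the splitting of $p$ in $F$), with $\mathrm{div}(h_j)=D_j$ and $q$-expansion equal to $1$ at every cusp; the fact that $\prod_j h_j$ has the parallel weight $(p-1,\dots,p-1)$ of the total Hasse invariant forces this shape. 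Pulling $h_j$ back along the modular embedding of~\eqref{eq:modem} converts the Hilbert automorphy factor into the twisted one, so $\varphi^\ast h_j$ is a twisted modular form on $(\Gamma,\varphi)$ over $\overline{\F}_p$ of weight $(0,\dots,-1,\dots,p,\dots,0)$, with $-1$ in slot $j$ and $p$ in slot $j'$, with $t$-expansion $1$ at $i\infty$ and divisor supported on the intersection $\mathcal{X}_{\bar{\fp}}\cap D_j$. Raising to the $N$-th power clears the fractional contributions of the elliptic points and trivializes the multiplier, yielding a characteristic-$p$ candidate of weight $(0,\dots,-N,\dots,pN,\dots,0)$.

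The second and central step is to lift this candidate to characteristic zero. Here I would apply Lemma~\ref{lem:deco}: a twisted modular form of weight $(0,\dots,-N,\dots,pN,\dots,0)$ and trivial multiplier is necessarily of the form $\Delta^{m}\,B_j^{-N}B_{j'}^{pN}\cdot\widetilde{h}(J)$ once the $Q_i$-factors are absorbed, and matching the first weight-component against the weights of $\Delta$ and the $B_\ell$ from~\eqref{eq:delta} and Part~1 of Theorem~\ref{thm:Bj} pins the exponent to $m=\tfrac{\chi(Y)}{2}N(\lambda_j-p\lambda_{j'})$, which is the claimed $m_{p,j}$ (using $B_1=1$ and $\lambda_1=1$ when $j$ or $j'$ equals $1$). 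One must check $m_{p,j}\in\Z_{\ge0}$, and this is exactly where the relation~\eqref{eq:Lyap} defining $\lambda_\ell$, the factor $N$, and the negativity of $\chi(Y)$ for a non-compact curve enter. Since $\Delta$, $B_j$, $B_{j'}$ and an integral basis of the polynomial part all have $p$-integral $t$-expansions (Theorem~\ref{thm:Bj}, Parts~1 and~3), I can read the residues of the coefficients of $\widetilde{h}(J)$ off the $t$-expansion of $(\varphi^\ast h_j)^N$ and then lift them to $R$; the resulting $h_{p,j}$ is an honest characteristic-zero twisted modular form of the prescribed weight, giving Part~1 and the product decomposition in Part~2.

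Finally I would handle the normalization and the divisor congruence. Because $h_j$ has $q$-expansion $1$ at every cusp, so does $(\varphi^\ast h_j)^N$ in the variable $t$, and since $\Delta$, $B_j$, $B_{j'}$ are normalized with unit $t$-expansion, the leftover scaling lives in $\widetilde{h}_{p,j}(J)$; fixing it makes the $t$-expansion of $h_{p,j}$ at $i\infty$ congruent to $1$ modulo $p$, which is Part~3. For~\eqref{eq:gcd}, reducing $h_{p,j}$ modulo $\fp$ produces a twisted modular form whose interior zeros — equivalently the zeros of $\widetilde{h}_{p,j}\bmod\fp$, since $\Delta$, the $Q_i$, and the $B_\ell$ account for all zeros at cusps and elliptic points and the $B_\ell$ have none in $\Po$ — coincide with $N$ times the non-ordinary cycle $\mathcal{X}_{\bar{\fp}}\cap D_j$ cut out by $\mathrm{ph}_{\fp,j}$. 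Hence $\widetilde{h}_{p,j}\bmod\fp$ vanishes exactly on the points of $\mathrm{ph}_{\fp,j}$ but a priori with multiplicities, and dividing by $\gcd\!\bigl(\widetilde{h}_{p,j},\tfrac{d}{dJ}\widetilde{h}_{p,j}\bigr)$ extracts the radical and recovers $\mathrm{ph}_{\fp,j}$.

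The main obstacle is the lift itself. On $\mathcal{M}_{F,s}$ the partial Hasse invariants are genuinely characteristic-$p$ objects that do not extend to characteristic zero, so the whole point is that restriction to a genus-zero curve enlarges the ambient space of twisted modular forms enough — precisely via the auxiliary forms $B_2,\dots,B_g$ of mixed-sign weight and no zeros — to absorb the negative weight $-N$ and furnish a holomorphic characteristic-zero representative. Controlling this lift finely enough to guarantee simultaneously the $p$-integrality of every factor in the decomposition and the exact interior divisor, so that the gcd in~\eqref{eq:gcd} returns $\mathrm{ph}_{\fp,j}$ rather than a proper multiple of it, is the technical heart of the argument.
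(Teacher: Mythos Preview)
This theorem is not proved in the present paper: it is quoted verbatim as ``Theorem~1 in~\cite{BLpHi}'' and only its statement is recalled, with no proof given here. There is therefore no ``paper's own proof'' against which to compare your proposal.

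That said, your sketch is a plausible outline of how the result in~\cite{BLpHi} is established: pull back the Andreatta--Goren partial Hasse invariant $h_j$ along the modular embedding, raise to the $N$-th power to kill the multiplier and the fractional elliptic contributions, then invoke the structure theory of twisted modular forms on the genus-zero curve (the $B_\ell$'s and $\Delta$ of Theorem~\ref{thm:Bj} and Lemma~\ref{lem:deco}) to produce a characteristic-zero lift with the stated product decomposition. You have correctly identified the key point, namely that the negative weight component $-N$ is absorbed by the nowhere-vanishing $B_j$, which is precisely what fails on the Hilbert modular variety itself. The weight bookkeeping giving $m_{p,j}=\tfrac{\chi(Y)}{2}N(\lambda_j-p\lambda_{j'})$ is right. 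One point you gloss over is why the $Q_i$-exponents $\epsilon_i$ in the decomposition of Lemma~\ref{lem:deco} all vanish for $(\varphi^\ast h_j)^N$; this is where the choice of $N$ as the lcm of the elliptic orders does real work, and in~\cite{BLpHi} this is handled carefully (cf.\ Remark~\ref{rmk:ms}). But for a comparison with the present paper there is nothing to do: the proof lives entirely in the cited reference.
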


The second property in the theorem is the analogue for the~$t$-expansion of the $q$-expansion property of the Hasse invariant for elliptic curves. It will be crucial in relating the Atkin polynomials to the non-ordinary locus of~$\mathcal{X}_{\overline{\fp}}$. 

\begin{example}
\label{ex:pHi2}
If~$g=2$ and~$p$ is a prime inert in~$F$, then~$h_{p,1}$ and~$h_{p,2}$ have weight~$(-1,p)$ and~$(p,-1)$ respectively, i.e., $j'=2$ if~$j=1$ and~$j'=1$ if~$j=2$. If~$p$ is split in~$F$, then~$h_{p,1}$ has weight~$(p-1,0)$ and~$h_{p,2}$ has weight~$(0,p-1)$, i.e., $j=j'$ for~$j=1,2$.
\end{example}

\begin{remark}
\label{rmk:ms}
The modular forms~$h_{p,j}$ in the theorem are lifts of powers of the partial Hasse invariants of~$\mathcal{X}_{\overline{\fp}}$. It is shown in~\cite{BLpHi} (where the modular forms in the theorem are denoted by~$h_{p,j}^N(\tau)$) that in many cases one can lift directly the partial Hasse invariants, which have lower weight (non-parallel weight~$p-1$) as well as reduced divisor. For the purposes of this paper, however, we only need the existence of twisted modular forms that satisfy the three properties of the theorem. One could (as we initially did) prove the theorems of this paper by using the minimal lift of the partial Hasse invariants, but with no real gain and at the expense of slightly longer proofs.  
\end{remark}

\subsection{Serre derivatives}
\label{sec:Serre}

Let~$\Gamma$ be a genus-zero non co-compact group with modular embedding~$\varphi$, and let~$\left(\begin{smallmatrix} 1 & \alpha_\infty \\ 0 & 1\end{smallmatrix}\right)$ be the generator of the free part of the stabilizer of the cusp~$i\infty$.
Normalize the differentiation operator~$D$ on~$\Po$ by 
\begin{equation}
\label{eq:D}
Df\=f'\:=\frac{\alpha_\infty}{2\pi i}\frac{df(\tau)}{d\tau}\=q\frac{df(q)}{dq}\,,
\end{equation}
for~$f$ a holomorphic function on~$\Po$ and~$q:=e^{2\pi i\tau/\alpha_\infty}$ a parameter at the cusp~$i\infty$. 

Let~$\Delta$ be the modular form defined before~\eqref{eq:delta}, and, for~$j=1,\dots,g$, let~$B_j$ be as in Theorem~\ref{thm:Bj}. Define
\begin{equation}
\label{eq:Pj}
P_1\:=\frac{\Delta'}{\Delta}\,,\quad P_j\:=\frac{\Delta'}{\Delta}\-\frac{2}{\lambda_j\cdot\chi(\Gamma)}\frac{B_j'}{B_j}\,,\quad j=2,\dots,g\,.
\end{equation}
Let~$\vec{k}=(k_1,\dots,k_g)\in\Q^g$. The~\emph{Serre derivative of weight~$\vec{k}$} is the differential operator
\begin{equation}
\label{eq:Serre}
\vartheta\=\vartheta_{\vec{k}}:= D\+\frac{\chi(\Gamma)}{2}\sum_{j=1}^g{k_j\lambda_jP_j}\,.
\end{equation}

\begin{lemma}
Let~$\vec{k}=(k_1,\dots,k_g)\in\Q^g$ and let~$f\in M_{\vec{k}}(\Gamma,\varphi)$. Then
\[
\vartheta_{\vec{k}}f\;\in\;M_{(k_1+2,k_2,\dots,k_g)}(\Gamma,\varphi)\,.
\]
\end{lemma}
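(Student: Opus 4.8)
The plan is to prove the weight shift by a direct transformation-law computation: I will show that both $Df$ and each weight-two term $P_j$ fail to transform like a twisted modular form by an explicit \emph{anomaly}, and that the combination defining $\vartheta_{\vec k}$ is rigged so that these anomalies cancel. Throughout, write $\mu_j=\mu_j(\gamma,\tau):=\sigma_j(c)\varphi_j(\tau)+\sigma_j(d)$ for the $j$-th automorphy factor attached to $\gamma=\left(\begin{smallmatrix}a&b\\c&d\end{smallmatrix}\right)\in\Gamma$, so that $f(\gamma\tau)=v(\gamma)f(\tau)\prod_{l=1}^g\mu_l^{k_l}$. Since $D$ differs from $\tfrac{d}{d\tau}$ only by the constant $\tfrac{\alpha_\infty}{2\pi i}$, I may run the computation with the ordinary derivative $f_\tau$ and the ordinary logarithmic derivative $L\phi:=\phi_\tau/\phi$, the constant factoring out of $\vartheta_{\vec k}f$ at the end; here $P_1=\tfrac{\alpha_\infty}{2\pi i}L\Delta$ and $P_j=\tfrac{\alpha_\infty}{2\pi i}\bigl(L\Delta-\tfrac{2}{\lambda_j\chi(\Gamma)}LB_j\bigr)$ for $j\ge2$.

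First I would differentiate the transformation law of $f$ in $\tau$, using $\tfrac{d}{d\tau}(\gamma\tau)=\mu_1^{-2}$ (as $\det\gamma=1$ and $\varphi_1=\mathrm{id}$), to obtain
\[
\mu_1^{-2}f_\tau(\gamma\tau)=v(\gamma)\Bigl(\textstyle\prod_{l}\mu_l^{k_l}\Bigr)\Bigl(f_\tau+f\sum_{l}k_l\tfrac{\sigma_l(c)\varphi_l'}{\mu_l}\Bigr).
\]
Next I would compute the anomalies of the logarithmic derivatives from the weights recalled in Theorem~\ref{thm:Bj}: since $\Delta$ has weight $(-2/\chi(\Gamma),0,\dots,0)$ and $B_j$ has weight $(-\lambda_j,0,\dots,0,1,0,\dots,0)$, differentiating $\log\Delta(\gamma\tau)$ and $\log B_j(\gamma\tau)$ yields
\[
\mu_1^{-2}(L\Delta)(\gamma\tau)=L\Delta-\tfrac{2}{\chi(\Gamma)}\tfrac{\sigma_1(c)\varphi_1'}{\mu_1},\qquad
\mu_1^{-2}(LB_j)(\gamma\tau)=LB_j-\lambda_j\tfrac{\sigma_1(c)\varphi_1'}{\mu_1}+\tfrac{\sigma_j(c)\varphi_j'}{\mu_j}.
\]
Forming $\Pi_j:=L\Delta-\tfrac{2}{\lambda_j\chi(\Gamma)}LB_j$ (with the conventions $\lambda_1=1$, $B_1=1$, so $\Pi_1=L\Delta$), the two $\mu_1^{-1}$ contributions cancel and one is left with the \emph{uniform} anomaly
\[
\mu_1^{-2}\Pi_j(\gamma\tau)=\Pi_j-\tfrac{2}{\lambda_j\chi(\Gamma)}\tfrac{\sigma_j(c)\varphi_j'}{\mu_j},\qquad j=1,\dots,g.
\]

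Finally I would assemble the pieces. Writing $\vartheta_{\vec k}f=\tfrac{\alpha_\infty}{2\pi i}\bigl(f_\tau+\tfrac{\chi(\Gamma)}{2}\sum_j k_j\lambda_j\Pi_j\,f\bigr)$, evaluating at $\gamma\tau$, and substituting the two anomaly formulas, the factor $v(\gamma)\mu_1^{2}\prod_l\mu_l^{k_l}$ comes out and the surviving anomalous terms are
\[
f\sum_{l}k_l\tfrac{\sigma_l(c)\varphi_l'}{\mu_l}+\tfrac{\chi(\Gamma)}{2}\sum_{j}k_j\lambda_j\Bigl(-\tfrac{2}{\lambda_j\chi(\Gamma)}\Bigr)\tfrac{\sigma_j(c)\varphi_j'}{\mu_j}\,f=0,
\]
the point being that $\tfrac{\chi(\Gamma)}{2}\cdot\lambda_j\cdot\tfrac{2}{\lambda_j\chi(\Gamma)}=1$ matches the coefficient $k_j$ coming from $f_\tau$. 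Hence $\vartheta_{\vec k}f(\gamma\tau)=v(\gamma)\,\mu_1^{k_1+2}\prod_{l\ge2}\mu_l^{k_l}\,\vartheta_{\vec k}f(\tau)$, which is precisely the automorphy of weight $(k_1+2,k_2,\dots,k_g)$ with multiplier $v$. Holomorphy on $\Po$ is immediate, since $\Delta$ and the $B_j$ are non-vanishing on $\Po$ by Theorem~\ref{thm:Bj}, so the $P_j$ are holomorphic there; the growth at the cusps follows because $D=q\tfrac{d}{dq}$ preserves holomorphy and each $P_j$ is regular at $i\infty$ (indeed $P_1=\Delta'/\Delta=1+O(q)$ as $\Delta$ has a simple zero there, while $B_j'/B_j$ is regular since $B_j$ does not vanish at the cusp). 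The crux—and the step most prone to error—is the logarithmic-derivative bookkeeping above: one must use the precise two-entry weight of $B_j$ together with the normalization $\lambda_1=1$ to see that the constant $\tfrac{2}{\lambda_j\chi(\Gamma)}$ built into $P_j$ in~\eqref{eq:Pj} is exactly what cancels the $\mu_1$-anomalies of $\Delta$ and $B_j$, leaving a single clean $\mu_j$-term; getting these constants right is what forces the precise shape of~\eqref{eq:Pj}--\eqref{eq:Serre}, whereas the cusp-growth verification is routine.
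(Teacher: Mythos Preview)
Your proof is correct and follows essentially the same approach as the paper's own proof: both compute the transformation anomalies of $\Delta'/\Delta$ and $B_j'/B_j$, combine them to obtain the quasimodular transformation~\eqref{eq:trPj} for $P_j$, and then verify that the anomaly of $Df$ cancels against that of $\tfrac{\chi(\Gamma)}{2}\sum_j k_j\lambda_j P_j\,f$. You additionally spell out holomorphy on $\Po$ and growth at the cusps, which the paper leaves implicit.
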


\begin{proof}
The proof is a straightforward verification. Since~$\Delta$ and~$B_j$ are twisted modular forms of weight~$(-2/\chi(\Gamma),0,\dots,0)$ and~$(-\lambda_j,0,\dots,0,1,0,\dots,0)$, with~$1$ in the~$j$-th entry, respectively, for~$\gamma=\left(\begin{smallmatrix}a & b\\c&d \end{smallmatrix}\right)$ it holds
\[
\begin{aligned}
\frac{\Delta'(\gamma\tau)}{\Delta(\gamma\tau)}&\=(c\tau+d)^2\biggl(\frac{\Delta'(\tau)}{\Delta(\tau)}\+\frac{-\alpha_\infty}{\pi i\chi(\Gamma)}\cdot\frac{c}{c\tau+d}\biggr)\,,\\
\frac{B_j'(\gamma\tau)}{B_j(\gamma\tau)}&\=(c\tau+d)^2\biggl(\frac{B_j'(\tau)}{B_j(\tau)}\+\frac{\alpha_\infty}{2\pi i}\cdot\frac{\sigma_j(c)\varphi_j'(\tau)}{\sigma_j(c)\varphi_j(\tau)+\sigma_j(d)}\-\frac{\alpha_\infty\lambda_j}{2\pi i}\cdot\frac{c}{c\tau+d}\biggr)\,.
\end{aligned}
\] 
It follows that
\begin{equation}
\label{eq:trPj}
P_j(\gamma\tau)\=(c\tau+d)^2\biggl(P_j(\tau)+\frac{-\alpha_\infty}{\pi i\cdot\lambda_j\chi(\Gamma)}\cdot\frac{\sigma_j(c)\varphi_j'(\tau)}{\sigma_j(c)\varphi_j(\tau)+\sigma_j(d)}\biggr)\,.
\end{equation}
In other words, the~$P_j$ are analogues of quasimodular forms of weight two (e.g., the classical weight-two Eisenstein series) in the twisted setting. If~$f\in M_{\vec{k}}(\Gamma, \varphi)$, then
\[
f'(\gamma\tau)\=(c\tau+d)^2\prod_{j=1}^g(\sigma_j(c)\varphi_j(\tau)+\sigma_j(d))^{k_j}\cdot\biggl(f'(\tau)\+\frac{\alpha_\infty f(\tau)}{2\pi i}\cdot\sum_{j=1}^g\frac{k_j\sigma_j(c)\varphi_j'(\tau)}{\sigma_j(c)\varphi_j(\tau)+\sigma_j(d)}\biggr)\,,
\]
and a simple computation using the definition of~$\vartheta_{\vec{k}}$ proves the lemma. 
\end{proof}

\section{Non-ordinary locus and zeros of orthogonal polynomials}
\label{sec:main}

\subsection{Orthogonal polynomials of one variable}
\label{sec:ortho}

Here are collected the needed facts on orthogonal polynomials. We refer to Section~4 of~\cite{KZ}, from which we borrow the exposition, for a more detailed discussion and the proof of the results of this section. 

Let~$K$ be a field.
We will consider a scalar product~$\langle\,,\rangle$ on~$V=K[x]$ of the form~$\langle g,h\rangle=\psi(gh)$ where~$\psi\colon V\to K$ is a linear functional (in particular, $\langle g,h\rangle$ only depends on the product~$gh$). Applying the Gram–Schmidt process to the basis~$\{x^n\}_{n\ge0}$ of~$V$ we obtain a unique basis of monic orthogonal polynomials~$P_n(x)$ via the recursive definition
\begin{equation}
\label{eq:GSp}
P_n(x)\=x^n-\sum_{m=0}^{n-1}{\frac{\langle x^n,P_m\rangle}{\langle P_m,P_m\rangle }P_m(x)}\,,
\end{equation}
provided that at each stage~$\langle P_m,P_m\rangle\ne0$. The second part of the next proposition will play a major role in relating orthogonal polynomials to the non-ordinary locus of curves in Hilbert modular varieties.

\begin{proposition*}[Proposition~2 in~\cite{KZ}]
\label{prop:ortho}
\begin{enumerate}[wide=0pt]
\item The polynomials~$P_n$ satisfy a three-term recursion of the form
\[
P_{n+1}(x)=(x-a_n)P_n(x)-b_nP_{n-1}(x)\quad n\ge1\,,
\]
for some constants~$a_n,b_n\in K$ and~$b_n=\frac{\langle P_n,P_n\rangle}{\langle P_{n-1},P_{n-1}\rangle}\ne0$.
\item Define a second sequence of polynomials~$\{Q_n\}_{n\ge0}$ in~$K[x]$ by the same recurrence as in~1., but with initial values~$Q_0=0$ and~$Q_1(x)=\psi(1)$. Then
\[
\frac{Q_n(x)}{P_n(x)}\=\Phi(x)+O(x^{-2n-1})\in K[[x^{-1}]]\,,
\]
where
\[
\Phi(x)=\sum_{n=0}^\infty{\langle x^n,1\rangle x^{-n-1}}\in K[[x^{-1}]]
\]
is the generating function of moments of the scalar product~$\langle\,,\rangle$. 
This property characterizes~$P_n$ (assumed to be monic and of degree~$n$) and~$Q_n$ uniquely. 
\item Let~$g_n:=\langle x^n,1\rangle$ and define numbers~$\lambda_n\in K$ for~$n\ge1$ by the continued fraction expansion
\[
g_0+g_1X+g_2X^2+\cdots\=\frac{g_0}{1-\frac{\lambda_1X}{1-\frac{\lambda_2X}{1-\ddots}}}\;\in\;K[\![X]\!]\,.
\]
Then all~$\lambda_n$ are non-zero and~$a_n=\lambda_{2n}+\lambda_{2n+1}$ and~$b_n=\lambda_{2n-1}\lambda_{2n}$ for~$n\ge1$.
\end{enumerate}
\end{proposition*}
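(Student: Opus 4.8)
The plan is to treat the three parts in order, leaning throughout on the single structural feature of the scalar product, namely that $\langle g,h\rangle=\psi(gh)$ depends only on the product. This gives the shift-symmetry $\langle xg,h\rangle=\psi(xgh)=\langle g,xh\rangle$, which is the engine behind all three statements.

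For Part 1, I would start from the fact that $xP_n$ is monic of degree $n+1$, hence expands uniquely as $xP_n=P_{n+1}+\sum_{m=0}^{n}c_mP_m$ with $c_m=\langle xP_n,P_m\rangle/\langle P_m,P_m\rangle$. The shift-symmetry rewrites $\langle xP_n,P_m\rangle=\langle P_n,xP_m\rangle$, and since $xP_m$ has degree $m+1<n$ whenever $m<n-1$, orthogonality of $P_n$ to all lower-degree polynomials forces $c_m=0$ for $m<n-1$. Only $c_n=:a_n$ and $c_{n-1}=:b_n$ survive, yielding the recursion. To evaluate $b_n$ I would use $\langle xP_n,P_{n-1}\rangle=\langle P_n,xP_{n-1}\rangle$ together with $xP_{n-1}=P_n+(\text{lower order})$, which collapses to $\langle P_n,P_n\rangle$; dividing by $\langle P_{n-1},P_{n-1}\rangle$ gives the stated value, nonzero because all the norms are.

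Part 2 is the crux, and where I expect the real work to lie. I would encode the moment functional as $\Phi(x)=\psi_t\!\left(\tfrac{1}{x-t}\right)$, acting on the $t$-variable through $\tfrac{1}{x-t}=\sum_{k\ge0}t^kx^{-k-1}$. Splitting $\tfrac{P_n(x)}{x-t}=\tfrac{P_n(x)-P_n(t)}{x-t}+\tfrac{P_n(t)}{x-t}$ decomposes $P_n(x)\Phi(x)$ into a polynomial part $\psi_t\!\left(\tfrac{P_n(x)-P_n(t)}{x-t}\right)=:\widetilde Q_n(x)$ of degree $n-1$ (the divided difference) and a remainder $\sum_k\langle x^k,P_n\rangle x^{-k-1}$; orthogonality kills the terms with $k<n$, so the remainder is $O(x^{-n-1})$ and hence $\widetilde Q_n/P_n=\Phi+O(x^{-2n-1})$. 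It then remains to identify $\widetilde Q_n$ with the $Q_n$ of the recurrence: I would feed the recursion for $P_n$ into the divided-difference definition and verify that $\widetilde Q_n$ satisfies the same three-term recurrence, the cross term contributing $\psi(P_n)=\langle P_n,1\rangle=0$ for $n\ge1$, with initial data $\widetilde Q_0=0$, $\widetilde Q_1=\psi(1)$ matching. Uniqueness of the pair is the standard Padé degree count: two solutions $Q/P,\,Q'/P'$ would give $(QP'-Q'P)/(PP')=O(x^{-2n-1})$ with numerator of degree $\le 2n-1$ and denominator of degree $2n$, forcing $QP'-Q'P=0$.

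For Part 3 I would recast everything as a continued fraction. Substituting $x=1/X$ turns the three-term recurrence of Parts 1--2 into the statement that $\sum_n g_nX^n$ admits the Jacobi continued fraction with coefficients $a_n,b_n$, whose $n$-th convergent is precisely the Padé approximant $Q_n/P_n$. The expansion in the proposition is instead the Stieltjes-type fraction with single coefficients $\lambda_n$, and the two are linked by the classical even-contraction identity, which produces exactly $a_n=\lambda_{2n}+\lambda_{2n+1}$ and $b_n=\lambda_{2n-1}\lambda_{2n}$; non-vanishing of every $\lambda_n$ then follows at once from $b_n\neq0$ (Part 1), since each index occurs as some $2n-1$ or $2n$. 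The main obstacle I anticipate is not the generating-function bookkeeping of Part 2, which becomes clean once the divided-difference splitting is in place, but rather making the contraction of Part 3 rigorous at the level of formal power series, i.e.\ checking that the even part of the $\lambda$-fraction agrees coefficientwise with the $(a_n,b_n)$-fraction and with the prescribed series; I would handle this by induction on the convergents using the fundamental recurrence satisfied by their denominators.
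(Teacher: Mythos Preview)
Your proposal is correct and follows the classical approach to orthogonal polynomials. Note, however, that the paper does not itself prove this proposition: it is stated as Proposition~2 of~\cite{KZ}, and the paper explicitly refers to Section~4 of~\cite{KZ} ``for a more detailed discussion and the proof of the results of this section.'' Your argument---shift-symmetry for the three-term recurrence, the divided-difference identity $P_n(x)\Phi(x)=\psi_t\bigl(\tfrac{P_n(x)-P_n(t)}{x-t}\bigr)+O(x^{-n-1})$ for the Pad\'e property, and the even contraction of a Stieltjes fraction into a Jacobi fraction for Part~3---is precisely the standard route and is in substance what Kaneko and Zagier do.
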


\subsection{Atkin's polynomials}

Let~$Y\hookrightarrow M_F$ be a genus-zero affine curve in a Hilbert modular variety of dimension~$g$, with integral model~$\mathcal{X}\to\mathcal{Y}$ defined over~$R=\mathcal{O}_K[S^{-1}]$. Let~$\Gamma$ be the uniformizing group of~$Y$, and denote by~$\varphi=(\varphi_j)_{j=1}^g\colon\Po\to\Po^g$ the associated modular embedding. Let~$t$ be the Hauptmodul with zero at the cusp~$i\infty$ introduced in Section~\ref{sec:tmf0}, and let~$J:=\tfrac{1}{t}$ be a Hauptmodul with a pole at~$i\infty$. Let~$P_1,\dots,P_g$ be the 
twisted quasimodular forms defined in~\eqref{eq:Pj}. 

For~$j=1,\dots,g$ we define scalar products~$\langle\,,\rangle_j$ on the vector space~$K[J]$ by
\[
\langle g,h\rangle_j\:= \text{ the constant term in the Laurent $t$-expansion of }-g\cdot h\cdot\frac{JP_j}{J'}\,,\quad g,h\in K[J]\,.
\]
We call~$\langle\,,\rangle_j$ the~\emph{$j$-th Atkin scalar product}.

\begin{example}
  \label{ex:KZ}
For~$\Gamma=\SL_2(\Z)$, then~$g=1$ and~$K=\Q$, and the definition of~$\langle\,,\rangle_1$ agrees with the definition of Atkin's scalar product in Kaneko–Zagier's paper~\cite{KZ} up to a non-zero constant factor. In this case we can normalize~$Q_1=E_4$ to be the Eisenstein series of weight~$4$, and let~$J$ be the usual~$J$-invariant function. From the relation after~\eqref{eq:Qi} we find that our~$\Delta=E_4^3/J$ is the usual normalized cusp form of weight~$12$. Then~$P_1=\tfrac{\Delta'}{12\Delta}$ in~\eqref{eq:Pj} satisfies~$P_1=12\cdot E_2$ and, since~$J/J'=E_4/E_6$, we have that~$\langle g,h\rangle_1=gh\cdot 12 E_2E_4/E_6=12\cdot\langle g,h\rangle_A$, where~$\langle\,,\,\rangle_A$ is the Atkin scalar product in~\cite{KZ}, by Point 3 of Proposition 3 in that paper. 
\end{example}

\begin{lemma}
\label{lem:sprod}
\begin{enumerate}[wide=0pt]
\item Let~$q$ be as in Section~\ref{sec:Serre}. For~$j=1,\dots,g$, the following definitions of the scalar products~$\langle\,,\rangle_j$ are equivalent:
\begin{enumerate}[wide=0pt]
\item $\langle g,h\rangle_j\:=$ the constant term in the Laurent~$t$-expansion of $-gh\frac{JP_j}{J'}$;
\item $\langle g,h\rangle_j\:=$ the constant term in the Laurent~$q$-expansion of $gh\cdot P_j$.
\end{enumerate}
\item The generating function~$\Phi_j(t)=\sum_{m\ge0}\langle J^m,1\rangle_j\cdot t^{m+1}$ of moments of~$\langle\,,\,\rangle_j$ is given in closed form by
\begin{equation}
\label{eq:genfun}
\Phi_j(t)\=-\frac{P_j(t)}{J'(t)}\,.
\end{equation}
\end{enumerate}
\end{lemma}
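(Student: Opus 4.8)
The plan is to prove the two parts of Lemma~\ref{lem:sprod} by translating between the two natural coordinates at the cusp, namely the Hauptmodul $t$ (with a simple zero at $i\infty$) and its associated $q$-parameter, and then reading off the generating function directly from the definition. First I would establish the equivalence of the two definitions of $\langle\,,\rangle_j$ in Part~1. The key observation is the change-of-variables relation between the differential operator $D = q\,\tfrac{d}{dq}$ normalized in~\eqref{eq:D} and differentiation in $J$. Since $t = 1/J$ has a simple zero at $i\infty$ and $q$ is a local parameter there, one has $Dt = t' = q\,\tfrac{dt}{dq}$, and the constant term of a Laurent expansion is the same whether extracted in $t$ or in $q$ (both are uniformizers vanishing to first order at the cusp, so they differ by a unit, and the residue-type functional ``constant term'' is coordinate-independent only after the correct Jacobian factor is inserted). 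Concretely, I would write $-gh\,\tfrac{JP_j}{J'}$ and use $J' = DJ = q\,\tfrac{dJ}{dq}$ together with $J = 1/t$ to show that $\tfrac{J}{J'}\,P_j$, expanded in $t$, has the same constant term as $P_j$ expanded in $q$ up to the factor $JP_j/J' = -t\,P_j\big/(Dt/t)\cdot(\text{something})$; the cleaner route is to note $\tfrac{dJ}{J} = -\tfrac{dt}{t}$, so that $\tfrac{J}{J'}\,P_j\,\tfrac{dt}{t} = -P_j\,\tfrac{dq}{q}$ as differential forms, and the constant Laurent coefficient is exactly the residue of $gh\,P_j\,\tfrac{dq}{q}$, which is coordinate-free. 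This identifies definition (a) with definition (b).

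For Part~2, I would compute the generating function $\Phi_j(t) = \sum_{m\ge0}\langle J^m,1\rangle_j\, t^{m+1}$ directly from definition (a). By definition, $\langle J^m,1\rangle_j$ is the constant term in the $t$-expansion of $-J^m\cdot\tfrac{JP_j}{J'}$. Writing $J = 1/t$, the factor $-\tfrac{JP_j}{J'} = -\tfrac{P_j}{tJ'} = -\tfrac{P_j(t)}{J'(t)}\cdot\tfrac{1}{t}$ (using $J = 1/t$), and multiplying by $J^m = t^{-m}$, the constant term of $-t^{-m-1}\,\tfrac{P_j(t)}{J'(t)}$ is precisely the coefficient of $t^{m+1}$ in the power series expansion of $-\tfrac{P_j(t)}{J'(t)}$. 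Hence $\langle J^m,1\rangle_j$ is the $(m+1)$-st Taylor coefficient of $-P_j(t)/J'(t)$, and summing $\langle J^m,1\rangle_j\,t^{m+1}$ over $m\ge0$ reconstructs the full series $-P_j(t)/J'(t)$ (modulo its value and linear behaviour at $t=0$, which I would check vanish appropriately so that the series genuinely starts at order $t$). This yields~\eqref{eq:genfun} directly.

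I anticipate the main obstacle to be bookkeeping the lowest-order terms and the normalization of $P_j(t)/J'(t)$ near $t = 0$: I must verify that $-P_j(t)/J'(t)$ has no constant term and no pole, so that it is a genuine element of $t\cdot K[\![t]\!]$ matching the index shift in $\Phi_j(t) = \sum_{m\ge0}\langle J^m,1\rangle_j\,t^{m+1}$. This requires knowing the behaviour of $P_j = \tfrac{\Delta'}{\Delta} - \tfrac{2}{\lambda_j\chi(\Gamma)}\tfrac{B_j'}{B_j}$ at the cusp: since $\Delta$ has a simple zero at $i\infty$ (from~\eqref{eq:delta}, $\mathrm{div}(\Delta) = 1\cdot i\infty$) and $B_j$ is non-vanishing there (Part~1 of Theorem~\ref{thm:Bj}), the logarithmic derivative $\Delta'/\Delta$ contributes the leading behaviour, and I would track precisely how $D = q\,\tfrac{d}{dq}$ interacts with the simple zero to confirm the order of vanishing of $P_j/J'$ in $t$. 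The remaining verification—that the coefficient extraction commutes with the infinite sum—is a formal power-series manipulation and requires no analytic input, so the entire argument reduces to this single careful local computation at the cusp, which I would carry out using the explicit normalizations fixed in Sections~\ref{sec:tmf0} and~\ref{sec:Serre}.
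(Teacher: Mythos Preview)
Your proposal is correct and follows essentially the same approach as the paper: for Part~1 the paper writes the constant term via Cauchy's formula and uses the differential identity $-P_j\tfrac{J}{J'}\tfrac{dt}{t}=P_j\tfrac{dt}{t'}=P_j\tfrac{dq}{q}$, which is exactly your residue argument, and for Part~2 it performs the same coefficient extraction, dispatching your anticipated order-of-vanishing concern in one parenthetical (``the power series expansion of $P_j/J'$ starts with~$t$ since $J'$ has a simple pole at~$i\infty$'').
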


\begin{proof}
\begin{enumerate}[wide=0pt]
\item
The equivalence between the definitions follows by writing the constant term of the Laurent expansion as an integral with Cauchy's formula
\[
\langle g,h\rangle_j\=\frac{1}{2\pi i}\int_{\delta_\infty}\frac{-g(t)h(t)\cdot \tfrac{J(t)}{J'(t)}P_j(t)}{t}\,dt\,,
\]
for~$\delta_\infty$ a loop around~$i\infty$, and by using the relation~$-P_j(t)\tfrac{J(t)}{J'(t)}\,\frac{dt}{t}=P_j(t)\frac{dt}{t'}=P_j(q)\frac{dq}{q}$. 
\item From the definition, the Atkin scalar product~$\langle J^m,1\rangle_j$ is the constant term in the expansion of~$-J^m\cdot1\cdot\tfrac{JP_j}{J'}=-J^{m+1}\frac{P_j}{J'}$ in~$t=J^{-1}$, i.e., the~$m$-th coefficient in the expansion~$-\frac{P_j}{J'}=\sum_{n=0}^\infty{g_n\cdot J^{-n-1}}=\sum_{n=0}^\infty{g_nt^{n+1}}$ (the power series expansion of~$P_j/J'$ starts with~$t$ since~$J'$ has a simple pole at~$i\infty$).
It follows that the generating function~$\Phi_j(t)$ of moments~$\langle J^m,1\rangle_j$ is expressed in closed form as in the statement. 
\end{enumerate}
\end{proof}

\begin{remark}
\label{rmk:deltaj}
In Kaneko–Zagier's paper, the Atkin scalar product~$\langle g,h\rangle_A$ for modular functions on~$\SL_2(\Z)$ is originally defined as the constant term in the Laurent expansion of~$g\cdot h$ in the parameter~$\Delta$ at~$i\infty$. This is equivalent to our definition of~$\langle\,,\rangle_1$ via the relation~$P_1=\Delta'/\Delta$ and an argument analogous to the one of Lemma~\ref{lem:sprod}.
Similarly to~$\Delta$ one can define for~$j=2,\dots,g$ a twisted modular form~$\Delta_j$ of weight~$\frac{-2}{\chi(\Gamma)\lambda_j}$ in the~$j$-th entry and zero elsewhere, and with degree-one divisor supported at~$i\infty$. There are two ways to construct such~$\Delta_j$: one is to replace~$t'$ with~$t'/\varphi_j'$ in the explicit formulae for~$Q_i$ in Theorem~2 of~\cite{BPf}, in order to get twisted modular forms of minimal divisor analogous to~$Q_1,\dots,Q_r$, and then proceed as in Section~2. Another possibility, using the twisted modular forms~$B_j$, is to choose a branch of~$B_j^{r_j}$, where~$r_j=\frac{-2}{\chi(\Gamma)\lambda_j}$, and define~$\Delta_j=\Delta\cdot B_j^{r_j}$. 

By considering the twisted modular forms~$\Delta_j$, one can give a definition of the scalar product~$\langle\,,\rangle_j$ analogous to the one in~\cite{KZ}, which is equivalent to the one we gave here. The twisted modular forms~$\Delta_j$ can be used to describe the functions~$P_j$ of Section~\ref{sec:Serre} as
\[
P_j\=\frac{\Delta'_j}{\Delta_j}\=\frac{\alpha_\infty\cdot d\log(\Delta_j)}{2\pi i\cdot d\tau}\,,\quad j=1,\dots,g\,,
\]
and Part (b) of Lemma~\ref{lem:sprod} can be restated as 
\begin{equation}
\label{eq:Pjv}
\Phi_j(t)\=-\frac{P_j(t)}{J'(t)}\=\frac{t^2}{t'}\cdot \frac{\alpha_\infty\cdot d\log(\Delta_j(\tau))}{2\pi i\cdot d\tau}\=t^2\cdot\frac{d\log(\Delta(t))}{dt}\,,
\end{equation}
since~$J'=\frac{-t'}{t^2}$.
\end{remark}

\begin{definition}
  \label{def:Ajn}
Let~$j\in\{1,\dots,g\}$, and suppose that the~$j$-th Atkin scalar product~$\langle\,,\rangle_j$ satisfies $\langle x,x\rangle_j\ne0$ for every non-zero~$x\in K[J]$. We define the~\emph{$j$-th Atkin's polynomials}~$\{A_{j,n}(J)\}_n$ as the orthogonal polynomials obtained by the Gram–Schmidt process applied to the basis~$\{J^n\}_{n\ge0}$ of~$K[J]$ with respect to the scalar product~$\langle\,,\rangle_j$.
\end{definition}

\subsection{Positive definiteness of Atkin scalar products}
\label{sec:posdef}
In the definition of Atkin polynomials we supposed that the scalar products satisfy the condition~$\langle x,x\rangle_j\ne0$ for every non-zero~$x\in K[J]$. However, as shown in~\cite{Ts} for the product~$\langle\,,\rangle_1$ on~$\Gamma_0(5)$, this is not always the case. Nevertheless, the Atkin scalar products satisfy this condition for infinitely many Fuchsian groups $\Gamma$.

\begin{proposition}
\label{prop:posdef}
Let~$Y\simeq\Po/\Gamma$ be a genus zero affine curve with a modular embedding defined over~$K\subset \R$. Then there exists a Hauptmodul~$J$ such that the scalar products~$\langle\,,\rangle_j$ are positive definite on~$\R[J]$ for every~$j=1,\dots,g$. 
\end{proposition}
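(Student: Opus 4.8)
The plan is to reduce the positive definiteness of each $\langle\,,\rangle_j$ to the single statement that the moment functional $\psi_j(f):=\langle f,1\rangle_j$ is represented by a \emph{positive} Borel measure on the real line: if one can write $\psi_j(f)=\int_{\R}f(J)\,d\nu_j(J)$ with $\nu_j\ge 0$ supported on infinitely many points, then for every nonzero $g\in\R[J]$ one has $\langle g,g\rangle_j=\psi_j(g^2)=\int_{\R}g(J)^2\,d\nu_j(J)>0$, since $g(J)^2\ge 0$ pointwise on the real $J$-axis. This is the key gain of integrating over the \emph{real} locus rather than over $\Po$, and it is the cleanest way to obtain the hypothesis $\langle x,x\rangle_j\ne 0$ required in Definition~\ref{def:Ajn}.

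First I would rewrite $\psi_j$ as a period integral. By Lemma~\ref{lem:sprod}(a)--(b) and Remark~\ref{rmk:deltaj}, $\psi_j(f)$ is the constant term of $f\cdot P_j$ in the $q$-expansion, so $\psi_j(f)=\frac{1}{2\pi i}\oint_{\delta_\infty}f\,d\log\Delta_j$, a contour integral around the cusp $i\infty$ of the logarithmic differential of the twisted form $\Delta_j$. By Theorem~\ref{thm:Bj}, $\Delta$ and the $B_j$ are nonvanishing on $\Po$, so $\Delta_j$ has divisor $1\cdot i\infty$ and no other zeros, and $f\,d\log\Delta_j$ is holomorphic on $\mathcal F\smallsetminus\{i\infty\}$. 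Here is where the hypothesis $K\subset\R$ enters: I would choose (as allowed by the statement) a Hauptmodul $J$ with real $t$-expansion whose cusps and elliptic points lie on $\pro^1(\R)$, so that the $\Gamma$-invariant graph $J^{-1}(\pro^1(\R))$ consists of geodesics bounding a fundamental domain $\mathcal F$ on which $J$ is real. Since the generator $\tau\mapsto\tau+\alpha_\infty$ of the cusp stabilizer has $c=0$, the differential $d\log\Delta_j$ is genuinely invariant under it, so those two sides of $\mathcal F$ cancel and Cauchy's theorem replaces the loop around $i\infty$ by the remaining geodesic boundary; pushing $f$ forward along $J$ then yields $\psi_j(f)=\int_{\R}f(J)\,d\nu_j(J)$, where $d\nu_j$ is the pushforward of $\tfrac{1}{2\pi i}\,d\log\Delta_j$ restricted to the real-$J$ locus.

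To control the sign of $d\nu_j$ I would use the non-holomorphic completion $\widehat P_j:=P_j+w_j\,\varphi_j'/(2i\,\mathrm{Im}\,\varphi_j)$, where $w_j=-2/(\chi(\Gamma)\lambda_j)$ is the weight of $\Delta_j$. The transformation computation of Section~\ref{sec:Serre} shows that $\widehat P_j$ transforms as a weight-$(2,0,\dots,0)$ form, so $g\,h\,\widehat P_j\,d\tau$ descends to $\overline Y$, and a direct computation gives $\partial_{\bar\tau}\widehat P_j=-\tfrac{w_j}{4}\,|\varphi_j'|^2/(\mathrm{Im}\,\varphi_j)^2$, which up to the nonzero real constant $-w_j/4$ (whose sign is fixed by $\chi(\Gamma)<0$, whence $w_j>0$) is the pullback under $\varphi_j$ of the hyperbolic area form. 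Feeding this through Stokes on $\mathcal F$ identifies the bulk contribution to $\nu_j$ with a definite multiple of this manifestly positive density restricted to the geodesic boundary.

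The hard part will be positivity of $\nu_j$ taken as a whole: besides the geodesic/bulk density, the Stokes transfer produces boundary contributions at the remaining cusps and at the elliptic points of $\Gamma$, and these must be shown to respect the sign. This is precisely the step that fails in general—Tsutsumi's $\Gamma_0(5)$ example shows that $\langle\,,\rangle_1$ need not be positive definite—so the argument must use the real structure essentially to guarantee that the geodesics carry a positive measure and that the elliptic and cusp corrections enter with compatible sign. I expect the most robust route to this last point is to verify that the Stieltjes transform of $\nu_j$, which by \eqref{eq:genfun} is $\Phi_j(1/J)=-P_j/J'$ evaluated in the variable $J$, is a Nevanlinna--Pick function, i.e.\ maps the upper half-plane into the lower; for $Y$ defined over $\R$ this should follow from the monotonicity of $J$ along the real geodesics, and it simultaneously yields positivity of $\nu_j$ and determinacy of the associated moment problem, completing the proof.
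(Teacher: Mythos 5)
Your opening reduction---represent the moment functional $\psi_j$ by a positive measure supported on the real $J$-locus, so that $\langle g,g\rangle_j=\int g^2\,d\nu_j>0$---is exactly the strategy of the paper, and your first two paragraphs (shift the contour from $\delta_\infty$ to the geodesic boundary of a symmetric fundamental domain, use $K\subset\R$ to arrange that $J$ is real there) match the paper's setup step for step. The gap is the positivity of the resulting density, which you yourself flag as ``the hard part'' and never close. The paper's mechanism for this is concrete and different from both of your suggestions: the boundary sides of $\mathcal{F}$ come in $\Gamma$-paired geodesics $(\delta_s,-M_s\delta_s)$, and since $g,h$ are $\Gamma$-invariant the paired integrals combine so that only the quasimodular anomaly of $P_j$ from~\eqref{eq:trPj} survives, namely a multiple of $\varphi_j'(\tau)\,d\tau/\bigl(\varphi_j(\tau)+\sigma_j(d_s/c_s)\bigr)$. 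Changing variables to $\varphi_j$ and writing $\varphi_j(\delta_s)=-\sigma_j(d_s/c_s)+r_{s,j}e^{i\theta_j}$ turns this $1$-form into $i\,d\theta_j$, so that $2\pi\langle g,g\rangle_j=\sum_s\int_{a_{s,j}}^{b_{s,j}} g\bigl(J_j(\cdot)\bigr)^2\,d\theta_j$ with $J_j$ real on these arcs; positivity is then immediate because the measure is literally arc length in the angle variable. No separate analysis of cusp or elliptic contributions is needed---they are absorbed into the pairing.

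Two concrete problems with the routes you propose instead. First, the Stokes argument with the non-holomorphic completion $\widehat P_j$ is not coherent as written: Stokes converts the boundary integral into a two-dimensional bulk integral over $\mathcal{F}$, where $g(J)^2$ is complex-valued ($J$ is real only on the boundary), so multiplying a positive area density by $g(J)^2$ gives no sign information; and the claim that this ``identifies the bulk contribution\ldots restricted to the geodesic boundary'' reverses what Stokes does. Second, the Nevanlinna--Pick step is an unverified expectation: ``monotonicity of $J$ along the real geodesics'' is not established, and what is actually needed is positivity of the specific weight against which $g^2$ is integrated---precisely the content the paper extracts from the pairing-plus-anomaly computation. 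Since Tsutsumi's $\Gamma_0(5)$ example shows positive definiteness genuinely fails without the real structure, the proof cannot be considered complete until this step is supplied.
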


It follows that the Atkin polynomials exist for all triangle groups of type~$\Delta(n,m,\infty)$ and~$m,n\ge2$, whose associated quotient space is isomorphic to~$\pro^1\smallsetminus\{\infty,1,0\}$. Other examples include genus-zero Weierstrass–Teichmüller curves~$W_D$ in Hilbert modular surfaces, which are defined over~$\Q(\sqrt{D})$, for~$D>0$ a positive discriminant (see~\cite{BM} for examples with~$D=13$ and~$D=17$).
Finally, sometimes one can find a genus-zero branched covering defined over~$\R$ of a punctured sphere with complex algebraic punctures; an example, considered in~\cite{Sa}, is that of the curve~$\Po/\Gamma_0(5)$, whose covering curve~$\Po/\Gamma_0^*(5)$ is isomorphic to a real punctured sphere.

\begin{proof}
We first construct a suitable Hauptmodul~$J$, and then use it to prove the positive definiteness of the Atkin scalar products. 
In order to do this, we recall basic facts from the theory of conformal mappings and uniformization. A reference for the role of automorphic functions in classical uniformization is Chapter IX of Ford's book~\cite{Ford}. 

A punctured sphere~$Y=\pro^1\setminus\{a_1,\dots,a_n,\infty\}$ with~$a_1<a_2<\cdots<a_n\in\R$ is invariant under complex conjugation. It follows that if~$Y\simeq\Po/\Gamma$, then the Fuchsian group~$\Gamma$ admits a symmetric fundamental domain~$\mathcal{F}=\mathcal{F}_1\cup\mathcal{F}^*_1$, where~$\mathcal{F}_1^*$ is the reflection of~$\mathcal{F}_1$ along one of its boundary geodesics (see~\cite{Sibner} for a general statement). For such Riemann surfaces, the uniformization problem, that is, the construction of a holomorphic universal covering map~$J\colon\Po\to Y$ (a Hauptmodul), is equivalent to the problem of finding a conformal map~$\eta\colon\{y\in Y : \mathrm{Im}(y)>0\}\subset Y\to\mathcal{F}_1$; the universal covering map is obtained in fact by setting~$J=\eta^{-1}$ on~$\mathcal{F}_1$ and~$J=\overline{\eta^{-1}}$ (complex conjugation) on~$\mathcal{F}_1^*$, and by extending it to~$\Po$ via Schwarz reflection.
 The conformal map~$\eta$ always exists, and extends to the boundary: in particular, it maps the real segments connecting consecutive punctures~$a_i$ and~$a_{i+1}$ into suitable boundary geodesics of~$\mathcal{F}_1$ (we are assuming that~$J(i\infty)=\infty$). It follows that the Hauptmodul~$J$ maps elliptic points and cusps (the vertices of~$\mathcal{F}_1$) into the punctures~$a_1,\dots,a_n$ and takes real values on the boundary of~$\mathcal{F}$. In particular, this construction applies to~$Y$ with modular embedding defined over a number field~$K\subset\R$, yielding a Hauptmodul~$J$ taking algebraic values at elliptic points and cusps, and real values on the boundary of~$\mathcal{F}$.

Denote by~$\mathcal{F}_y$ the polygon obtained by truncating~$\mathcal{F}$ at height~$y>>0$, let~$\delta_\infty$ be its top edge, and let~$\delta_0$ be the image of~$\delta_\infty$ via the map~$\tau\mapsto\exp(2\pi i\tau/\alpha_\infty)$, where~$\alpha_\infty$ is as in~\eqref{eq:D} (it is the length of~$\delta_\infty$).  

From the definition of the Atkin scalar products and Lemma~\ref{lem:sprod}, we can write, for~$g,h\in K[J]$ and~$j=1,\dots,g$,
\begin{equation}
\label{eq:at}
\langle g,h\rangle_j\=\frac{1}{2\pi i}\int_{\delta_0}{g(J(q))h(J(q))\cdot P_j(q)\,\frac{dq}{q}}\=\frac{1}{\alpha_\infty}\int_{\delta_\infty}{g(J(\tau))h(J(\tau))\cdot P_j(\tau)\,d\tau}\,,
\end{equation}
where we see a polynomial in~$J$ as a modular function, i.e., as a function of~$q$ or~$\tau$.
Since the integrand is holomorphic on~$\mathcal{F}_y$, we can express the line integral as the integral over the part of the boundary of~$\mathcal{F}_y$ complementary to~$\delta_\infty$. 
Because of the symmetry, the sides of~$\mathcal{F}_y$ come in pairs~$(\delta_s,\delta'_s)$ together with a matrix~$M_s\in\Gamma$ such that~$\delta_s'=-M_s(\delta_s)$ (reversed orientation). By the invariance under~$\Gamma$ of~$g$ and~$h$ and Equation~\eqref{eq:at} it follows 
\[
\begin{aligned}
-\alpha_\infty\langle g,h\rangle_j&\=\sum_{s}\int_{\delta_s}{(gh)(J(\tau))\cdot P_j(\tau)\,d\tau}\+\int_{M_s(\delta_s)}{{(gh)(J(\tau))\cdot P_j(\tau)\,d\tau}}\\
&\=\sum_{s}\int_{\delta_s}{(gh)(J(\tau))\bigl[P_j(\tau)- P_j(M_s\tau)(c_s\tau+d_s)^{-2}\bigr]\,d\tau}\,,\quad M_s=\begin{pmatrix} a_s & b_s \\c_s & d_s
\end{pmatrix}\,. 
\end{aligned}
\]
The modular behavior~\eqref{eq:trPj} of~$P_j$ applied to the transformation~$M_s$ implies that
\begin{equation}
\label{eq:step1}
\alpha_\infty\cdot\langle g,h\rangle_j=\frac{\alpha_\infty}{2\pi i}\cdot\sum_{s}\int_{\delta_s}{(gh)(J(\tau))\cdot\frac{\varphi_j'(\tau)}{\varphi_j(\tau)+\sigma_j(d_s/c_s)}d\tau}\,.
\end{equation}

Let~$J_j$ be the holomorphic map defined on~$\varphi_j(\mathcal{F})$ such that~$J_j\circ\varphi_j(\tau)=J(\tau)$ for~$\tau\in\mathcal{F}$. These maps, whose existence is guaranteed by Riemann's mapping theorem, can be computed from the solutions of certain Picard–Fuchs differential equation associated to~$Y$ (see~\cite{BPf}). 
Note that, from the realness of~$J$ on the boundary of~$\mathcal{F}$ it follows that also~$J_j$ takes real values over~$\varphi_j(\delta_s)$, for every~$j=2,\dots,g$.

By rewriting a polynomial~$f(J)\in K[J]$ as~$f(J(\tau))=f(J_j(\varphi_j(\tau)))$ and by changing the variable from~$\tau$ to~$\varphi_j$ in~\eqref{eq:step1} one gets
\[
2\pi\cdot\langle g,h\rangle_j\=\frac{1}{i}\sum_s\int_{\varphi_j(\delta_s)}{\frac{(gh)(J_j(\varphi_j))}{\varphi_j+\sigma_j(d_s/c_s)}\,d\varphi_j}\,.
\]
The boundary geodesic~$\delta_s$ and its image via~$\varphi_j$ can be written as
\[
\delta_s\=-\frac{d_s}{c_s}+r_{s,1}\cdot e^{i\theta_1}\,,\qquad\varphi_j(\delta_s)=-\frac{\sigma_j(d_s)}{\sigma_j(c_s)}+r_{s,j}\cdot e^{i\theta_j}\,,
\]
for some positive radii~$r_{s,j}$ and angles~$\theta_j\in[a_{s,j},b_{s,j}]\subset[0,\pi]$.
By expressing the integral with respect to the variable~$\theta_j$ one finally gets
\begin{equation}
\label{eq:posdef2}
2\pi\cdot\langle g,h\rangle_j\=\sum_s\int_{a_{s,j}}^{b_{s,j}}{(gh)\bigl(J_j\bigl(-\sigma_j(d_s/c_s)+r_{s,j}\cdot e^{i\theta_j}\bigr)\bigr)\,d\theta_j}\,.
\end{equation} 
From~\eqref{eq:posdef2} one computes that
\[
2\pi\cdot\langle g,g\rangle_j\=\sum_s\int_{a_{s,j}}^{b_{s,j}}{g\bigl(J_j\bigl(-\sigma_j(d_s/c_s)+r_{s,j}\cdot e^{i\theta_j}\bigr)\bigr)^2\,d\theta_j}\,.
\]
This, together with the realness of~$J_j$ on~$\varphi_j(\delta_s)$ for every~$s$, proves that the Atkin scalar products are positive definite on~$\R[J]$. 
\end{proof}

\subsection{Partial Hasse polynomials and Atkin's polynomials}
\label{sec:ssl}

Let~$Y\simeq\Po/\Gamma$ be a curve in a Hilbert modular variety with model over~$\mathcal{O}_K[S^{-1}]$. 
From now on, we suppose that the set~$S$ contains all the primes dividing the numerator and the denominator of the orbifold Euler characteristic~$\chi(\Gamma)$ as well as the primes dividing numerator and denominator of the Lyapunov exponents~$\lambda_1,\dots,\lambda_g$ defined in the Theorem of Section~\ref{sec:tmf0}.

\begin{theorem}
\label{thm:main}
Let~$Y\hookrightarrow M_F$ be a non-compact genus-zero curve in a Hilbert modular variety of dimension~$g$ with integral model over~$R=\mathcal{O}_K[S^{-1}]$. For~$j\in\{1,\dots,g\}$, let $\{A_{j,n}(J)\}_n \subset K[J]$ be the family of Atkin orthogonal polynomials in Definition \ref{def:Ajn}. 
For every rational prime~$p\not\in S$ there exists~$n_{p,j}\in\Z_{\ge0}$ such that~$A_{j,n_{p,j}}(J)$ has~$p$-integral coefficients and
\[
A_{j,n_{p,j}}(J)\equiv\mathrm{ph}_{p,j}(J)\mod p\,,
\]
i.e., for every prime~$\fp\subset\mathcal{O}_K$ above~$p$ it holds
\[
A_{j,n_{p,j}}(J)\equiv\mathrm{ph}_{\fp,j}(J)\mod \fp\,.
\]
\end{theorem}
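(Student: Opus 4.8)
The plan is to mirror the Kaneko--Zagier strategy, using the lift of the partial Hasse invariant $h_{p,j}$ from the Theorem of Section~\ref{sec:pHi} as the genus-zero analogue of $E_{p-1}$, and the Padé-approximation characterization of orthogonal polynomials from Part~2 of the Proposition in Section~\ref{sec:ortho}. Fix $j$ and work with a single prime $\fp \mid p$ throughout (the statement over $\Z[S^{-1}]$ then follows componentwise via $R\otimes\overline{\F}_p\simeq\prod_{\fp\mid p}\overline{\F}_p$). First I would apply Lemma~\ref{lem:deco} (or directly Part~2 of the $h_{p,j}$ theorem) to write
\[
h_{p,j}(\tau)\=\Delta^{m_{p,j}}\,B_j^{-N}B_{j'}^{pN}\cdot\widetilde{h}_{p,j}(J),
\]
where $m_{p,j}=\tfrac{\chi(Y)}{2}N(\lambda_j-p\lambda_{j'})$ and $\deg\widetilde{h}_{p,j}\le m_{p,j}$. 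The goal is to identify, modulo $\fp$, the squarefree part of $\widetilde{h}_{p,j}$ (which by~\eqref{eq:gcd} equals $\mathrm{ph}_{\fp,j}$) with an Atkin polynomial $A_{j,n_{p,j}}$.

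The key computation is to take the logarithmic derivative. Applying $D=q\,d/dq$ to $\log h_{p,j}$ and using $D\Delta/\Delta=P_1$ together with the definition $P_j=P_1-\tfrac{2}{\lambda_j\chi(\Gamma)}B_j'/B_j$ from~\eqref{eq:Pj}, the $B_j^{-N}B_{j'}^{pN}$ factors contribute exactly the $P_j$ and $P_{j'}$ terms, so that
\[
\frac{h_{p,j}'}{h_{p,j}}\=\frac{\chi(\Gamma)}{2}N\bigl(\lambda_j P_j-p\,\lambda_{j'}P_{j'}\bigr)\+\frac{\widetilde{h}_{p,j}'(J)}{\widetilde{h}_{p,j}(J)}\,J'.
\]
Now I would reduce modulo $\fp$: by Property~3 the $t$-expansion of $h_{p,j}$ is $\equiv 1$, hence $h_{p,j}'/h_{p,j}\equiv 0 \pmod \fp$, and the term $p\,\lambda_{j'}P_{j'}$ vanishes as well. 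Rearranging and using $\Phi_j(t)=-P_j/J'$ from~\eqref{eq:genfun}, this yields a congruence expressing $\Phi_j(t)$ in terms of $\widetilde{h}_{p,j}'/\widetilde{h}_{p,j}$, i.e. (up to the normalizing constant $\tfrac{\chi(\Gamma)}{2}\lambda_j N$ that $S$ is assumed to clear) the logarithmic derivative of $\widetilde{h}_{p,j}$ appears as an approximant to the moment generating function $\Phi_j$.

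To convert this into the Padé statement I would compare orders of vanishing. Set $n_{p,j}=\deg\mathrm{ph}_{\fp,j}$, the degree of the squarefree part $\overline{h}:=\widetilde{h}_{p,j}/\gcd(\widetilde{h}_{p,j},\widetilde{h}_{p,j}')$. Writing $P_n(J):=A_{j,n}(J)$ for the Atkin polynomials and $Q_n$ for the companion sequence of Part~2 of the Proposition, the defining property is $Q_n/P_n=\Phi_j+O(t^{2n+1})$ in $K[[t]]$, and this characterizes $P_n$ uniquely among monic degree-$n$ polynomials. Reducing the identity above modulo $\fp$ shows that $\overline{h}$, together with a suitable numerator built from $\overline{h}'$, satisfies exactly this $O(t^{2n_{p,j}+1})$ approximation to $\Phi_j\bmod\fp$; by the uniqueness in Part~2 of the Proposition, $\overline{h}\equiv A_{j,n_{p,j}}\pmod\fp$. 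Finally, since $A_{j,n_{p,j}}$ is defined over $K$ and reduces to the reduction of the $R$-polynomial $\overline{h}$, it is $\fp$-integral; as this holds for every $\fp\mid p$, the polynomial $A_{j,n_{p,j}}$ is $p$-integral and $A_{j,n_{p,j}}\equiv\mathrm{ph}_{p,j}\pmod p$.

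The main obstacle I anticipate is the bookkeeping at the level of $t$-expansions versus the abstract Padé characterization: I must verify that the logarithmic-derivative congruence genuinely forces the $O(t^{2n_{p,j}+1})$ vanishing required by Part~2 of the Proposition, rather than merely an $O(t^{n_{p,j}+1})$ agreement. This hinges on the fact that $h_{p,j}\equiv 1$ to all orders in $t$ modulo $\fp$ (not just the constant term), so that the relation $\widetilde{h}_{p,j}'J'/\widetilde{h}_{p,j}\equiv -\tfrac{\chi(\Gamma)}{2}\lambda_j N\,J'\Phi_j$ holds as an identity of power series modulo $\fp$; one then checks that passing from $\widetilde{h}_{p,j}$ to its squarefree part $\overline{h}$ does not disturb the numerator degree, so that $\overline{h}$ and the corresponding numerator furnish precisely the $[n_{p,j}-1,n_{p,j}]$-Padé approximant. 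Establishing this order-of-contact bound cleanly, while tracking the multiplier from the $\Delta$-power and confirming $\deg\overline{h}=n_{p,j}$, is the delicate part; the rest is formal.
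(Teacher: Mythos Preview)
Your proposal is correct and is essentially the paper's own argument. The paper packages your logarithmic-derivative computation via the Serre derivative~$\vartheta_{\vec k}$ (using $\vartheta\Delta=0$ and $\vartheta B_l=0$ to get $\vartheta h_{p,j}=\Delta^{m_{p,j}}B_j^{-N}B_{j'}^{pN}J'\,\tfrac{d\widetilde h_{p,j}}{dJ}$), but unwinding the definition of~$\vartheta_{\vec k}$ this is exactly your identity for $h_{p,j}'/h_{p,j}$, and both routes land on the same congruence $\Phi_j\equiv\tfrac{2}{N\lambda_j\chi(\Gamma)}\cdot(\tfrac{d}{dJ}\widetilde h_{p,j})/\widetilde h_{p,j}\pmod\fp$ as an \emph{exact} equality of formal power series, so your worry in the last paragraph about the order of contact is unfounded.

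The one place to tighten is the closing step. Your sentence ``since $A_{j,n_{p,j}}$ \dots\ reduces to the reduction of the $R$-polynomial $\overline h$, it is $\fp$-integral'' is circular, and invoking uniqueness in Part~2 of the Proposition over the residue field presupposes that $A_{j,n_{p,j}}$ already reduces well. The paper's fix is to scale $A_{j,n_{p,j}}$ and $B_{n_{p,j}}$ by a common power of a uniformizer of~$\fp$ to make $A_{j,n_{p,j}}$ $\fp$-integral, cross-multiply to obtain $B_{n_{p,j}}\cdot\mathrm{ph}_{\fp,j}-R_{p,j}\cdot A_{j,n_{p,j}}\equiv 0\pmod\fp$ (the left side is a polynomial that is $O(J^{-1})$ as $J\to\infty$), and then use the coprimality of $R_{p,j}$ with $\mathrm{ph}_{\fp,j}$ together with the degree bound $\deg A_{j,n_{p,j}}\le n_{p,j}=\deg\mathrm{ph}_{\fp,j}$ to force $\mathrm{ph}_{\fp,j}\mid A_{j,n_{p,j}}$ and hence equality; a posteriori, the scaling was trivial and $A_{j,n_{p,j}}$ was $\fp$-integral to begin with.
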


\begin{proof}
Fix~$j\in\{1,\dots,g\}$ and let~$p$ be a prime~$p\not\in S$ unramified in~$F$ and in~$K$. The following congruences hold for the generating function of the moments of~$\langle\,,\rangle_j$:
\begin{equation}
\label{eq:gfcong}
\Phi_j(t)\=-\frac{P_j(t)}{J'(t)}\;\equiv\;\frac{-P_j(t)}{J'(t)\cdot h_{p,j}(t)}\;\equiv\;\frac{2}{N\lambda_j\chi(\Gamma)}\frac{\vartheta_{\vec{k}}h_{p,j}(t)}{J'(t)\cdot h_{p,j}(t)}\mod p\,,
\end{equation}
where~$\vec{k}$ is the weight of the partial Hasse invariant~$h_{p,j}$, i.e., it is~$-N$ in the~$j$-th component, $pN$ in the~$j'$-th component, and zero elsewhere (the indices~$j$ and~$j'$ are related as in Theorem~\ref{thm:pHi}). 
The first congruence holds because~$h_{p,j}(t)\equiv1\mod p$ (Point 2 in Theorem~\ref{thm:pHi}), which implies that to multiply or divide by~$h_{p,j}$ does not change anything modulo~$p$. The second congruence in~\eqref{eq:gfcong} follows from the computation of the Serre derivative
\begin{equation}
\label{eq:Serrecong}
\begin{aligned}
\vartheta_{\vec{k}}h_{p,j}&\=Dh_{p,j}\+\frac{\chi(\Gamma)}{2}(-N\lambda_jP_j+pN\lambda_{j'} P_{j'})\cdot h_{p,j}\\
&\;\equiv\; \frac{1}{t'(t)}\frac{d\,1}{dt}\+\frac{\chi(\Gamma)}{2}(-N\lambda_jP_j+pN\lambda_{j'}P_{j'})\cdot 1\;\equiv\; 0-\frac{\chi(\Gamma)N\lambda_j}{2}P_j(t)\mod p\,,
\end{aligned}
\end{equation}
where in the first congruence we used again that~$h_{p,j}(t)\equiv 1\mod p$, and~$D=\frac{1}{t'}\frac{d}{dt}$.

Let~$h_{p,j}=\Delta^{m_{p,j}}B_j^{-N}B_{j'}^{pN}\cdot\widetilde{h}_{p,j}(J)$ be the product decomposition of the partial Hasse invariant~$h_{p,j}$. In order to compute its Serre derivative, notice first that
\[
\vartheta\Delta=D\Delta\+\frac{\chi(\Gamma)}{2}\frac{-2}{\chi(\Gamma)}P_1\cdot\Delta\=D\Delta\-P_1\cdot\Delta\=0
\]
by definition of~$P_1$. A similar computation shows that~$\vartheta B_l=0$ for every~$l=1,\dots,g$. Finally, from the computation~$\vartheta(\widetilde{h}_{p,j}(J))=D\widetilde{h}_{p,j}(J)=J'\cdot\frac{d\widetilde{h}_{p,j}(J)}{dJ}$, one concludes that
\[
\vartheta(h_{p,j})\=\Delta^{m_{p,j}}B_j^{-N}B_{j'}^{pN} J'\frac{d\widetilde{h}_{p,j}(J)}{dJ}\,.
\]
This identity, together with the congruence in Part 2 of the theorem in Section~\ref{thm:pHi}, implies that, for every prime~$\fp\subset \mathcal{O}_K$ above~$p$, it holds
\begin{equation}
\label{eq:step2}
\frac{2}{N\lambda_j\chi(\Gamma)}\frac{\vartheta (h_{p,j})}{J'\cdot h_{p,j}}\=\frac{2}{N\lambda_j\chi(\Gamma)}\frac{\frac{d\widetilde{h}_{p,j}(J)}{dJ}}{\widetilde{h}_{p,j}(J)}\;\equiv\;\frac{R_{p,j}(J)}{\mathrm{ph}_{\fp,j}(J)}\mod \fp\,,
\end{equation}
where
\[
R_{p,j}(J):=\frac{2}{N\lambda_j\chi(\Gamma)}\frac{\frac{d\widetilde{h}_{p,j}(J)}{dJ}}{\gcd\bigl(\widetilde{h}_{p,j}(J),\frac{d}{dJ}\widetilde{h}_{p,j}(J)\bigr)}\,.
\] 
Note that by definition of~$R_{p,j}(J)$, its modulo~$\fp$ reduction and~$\mathrm{ph}_{\fp,j}(J)$ are coprime over~$k_\fp[J]$.

Let~$n_{p,j}\in\Z_{\ge0}$ be the degree of~$\mathrm{ph}_{\fp,j}(J)$ (it depends only on~$p$, see Theorem 3 in~\cite{BPf}), and let~$A_{j,n_{p,j}}(J)$ be the polynomial of degree~$n_{p,j}$ in the family of~$j$-th Atkin's polynomials~$\{A_{j,n}(J)\}_n$.
The second part of Proposition~\ref{prop:ortho} and Equations~\eqref{eq:gfcong} and~\eqref{eq:step2} relate, via the generating function of moments~$\Phi_j(J^{-1})$, the partial Hasse polynomials with the Atkin polynomials modulo~$p$:
\begin{equation}
\label{eq:gfcong2}
\frac{R_{p,j}(J)}{\mathrm{ph}_{\fp,j}(J)}\;\equiv\;\Phi_j\bigl(J^{-1}\bigr)\=\frac{B_{n_{p,j}}(J)}{A_{j,n_{p,j}}(J)}+O(J^{-2n_{p,j}-1})\mod \fp\,,
\end{equation}
where~$B_{n_{p,j}}(J)$ is the polynomial of degree~$n_{p,j}-1$ obtained as in Proposition~\ref{prop:ortho}. 
The proof can be concluded similarly to~Kaneko–Zagier.  Congruence~\eqref{eq:gfcong2}, after multiplying~$A_{j,n_{p,j}}(J)$ and~$B_{n_{p,j}}(J)$ if necessary by a common power of a uniformizer of~$\fp$ in order to make~$A_{j,n_{p,j}}(J)$ a $\fp$-integral polynomial, implies that
\[
B_{n_{p,j}}(J)\cdot\mathrm{ph}_{\fp,j}(J)\-R_{p,j}(J)\cdot A_{j,n_{p,j}}(J)\;\equiv\; O(J^{-1})\mod \fp
\]
as~$J\to\infty$ and, since the expression is a polynomial, it must vanish. 
The coprimality of~$R_{p,j}(J)$ modulo~$\fp$ and~$\mathrm{ph}_{\fp,j}(J)$ implies that~$\mathrm{ph}_{\fp,j}(J)$ divides the modulo~$\fp$ reduction of~$A_{j,n_{p,j}}(J)$ and hence, since the degree of~${A_{j,n_{p,j}}(J)}\mod\fp$ is bounded by~$\deg(\mathrm{ph}_{\fp,j}(J))$, this implies that~$A_{j,n_{p,j}}(J)$ has~$\fp$-integral coefficients and reduces to~$\mathrm{ph}_{\fp,j}(J)$ modulo~$\fp$. Since this holds for every prime~$\fp$ dividing~$p$, the theorem follows. 
\end{proof}

\begin{remark}
  As the proof of Theorem~\ref{thm:main} reveals, the integer~$n_{p,j}$ in the statement of the theorem is the degree of the partial Hasse polynomial~$\mathrm{ph}_{p,j}(J)$. It has been proven in~\cite{BLpHi} in infinitely many cases (including curves $\Po/\Gamma$ where $\Gamma$ has at most one elliptic point, or $\Gamma=\Delta(n,m,\infty)$ with $n$ and~$m$ coprime)
 that
\[
n_{p,j}=\dim M_{\vec{k}_j}(\Gamma,\varphi)\-1\,,
\]
where the weight~$\vec{k}_j$ is~$-1$ in the~$j$-th entry, $p$ in the~$j'$-entry, and zero elsewhere, where~$j$ and~$j'$ are related as in Theorem~\ref{thm:pHi}. Since the dimension of the space of twisted modular form is easily computable, it makes Theorem~\ref{thm:main} and Corollary~\ref{cor:main} below suitable for the computation of the zeroes of partial Hasse invariants and of the non-ordinary locus. 
Examples of the calculation of~$n_{p,j}$ can be found in Section~2.4 of~\cite{BLpHi}. 
\end{remark}

\begin{corollary}
\label{cor:main}
Let~$Y\hookrightarrow M_F$ be as in Theorem~\ref{thm:main} with integral model over~$\mathcal{O}_K[S^{-1}]$. For~$p\not \in S$ let~$\mathrm{no}_p(J)$ and~$\mathrm{sp}_p(J)$ denote the non-ordinary and the superspecial polynomial of~$Y$ defined in Section~\ref{sec:Koba}. For every prime~$p\not\in S$ there exist~$n_{p,1},\dots,n_{p,g}\in\Z_{\ge0}$ such that
\[
\begin{aligned}
\mathrm{no}^\mathcal{Y}_p(J)&\;\equiv\;\mathrm{lcm}\bigl(A_{1,n_{p,1}}(J),\dots,A_{g,n_{p,g}}(J)\bigr)\mod p\,,\\
\mathrm{sp}^\mathcal{Y}_p(J)&\;\equiv\;\mathrm{gcd}\bigl(A_{1,n_{p,1}}(J),\dots,A_{g,n_{p,g}}(J)\bigr)\mod p\,.
\end{aligned}
\]
If~$g=2$, the following congruences hold for the supersingular polynomial~$\mathrm{ss}_p(J)$:
\[
\mathrm{ss}_p(J)\;\equiv\;
\begin{cases}
\mathrm{lcm}(A_{1,n_{p,1}}(J), A_{2,n_{p,2}}(J)) \mod p &\text{ if $p$ is inert in }F\,,\\
\mathrm{gcd}(A_{1,n_{p,1}}(J), A_{2,n_{p,2}}(J))\mod p &\text{ if $p$ is split in }F\,.
\end{cases}
\]
\end{corollary}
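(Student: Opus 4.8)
The plan is to reduce the corollary to Theorem~\ref{thm:main} together with three purely geometric identifications, relating the non-ordinary, superspecial, and (for $g=2$) supersingular loci to the components $D_1,\dots,D_g$ of the non-ordinary locus. First I would fix a prime $\fp\subset\mathcal{O}_K$ above $p$ and record from Theorem~\ref{thm:main} that $A_{j,n_{p,j}}(J)\equiv\mathrm{ph}_{\fp,j}(J)\bmod\fp$ for every $j$. Since each $\mathrm{ph}_{\fp,j}$ is by definition a product of \emph{distinct} linear factors, its reduction is squarefree, so the $\gcd$ and the $\mathrm{lcm}$ of the $\mathrm{ph}_{\fp,j}$ in $\overline{\F}_p[J]$ are exactly the products of the roots lying, respectively, in all of the $D_j$ and in at least one of the $D_j$. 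Thus, after reducing modulo $\fp$, the statement reduces to the set-theoretic identities: (i) non-ordinary $\Leftrightarrow$ lying on some $D_j$; (ii) superspecial $\Leftrightarrow$ lying on every $D_j$; and (iii) for $g=2$, supersingular $\Leftrightarrow$ non-ordinary if $p$ is inert, while supersingular $\Leftrightarrow$ superspecial if $p$ is split. As all of this holds for each $\fp\mid p$, the componentwise congruences in $\prod_{\fp\mid p}\overline{\F}_p[J]$ follow.

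Identity (i) is essentially the definition of the $D_j$ as divisors of the partial Hasse invariants (Goren~\cite{GHi}, Andreatta--Goren~\cite{AG}): a fiber $X_{\bar\fp,J_0}$ is non-ordinary precisely when at least one partial Hasse invariant vanishes on it. Identity (ii) I would deduce from the characterization of superspecial abelian varieties by maximality of the $a$-number, equivalently by the simultaneous vanishing of all $g$ partial Hasse invariants: the Hasse--Witt matrix of $X_{\bar\fp,J_0}$, which is read off the restrictions of the lifts $h_{p,j}$, is totally degenerate exactly on $\bigcap_j D_j$, and this is the deepest Ekedahl--Oort stratum, where the $p$-divisible group is a product of supersingular ones.

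For (iii) I would analyze the $p$-divisible group of the RM abelian surface according to the splitting of $p$ in the real quadratic field $F$. When $p$ is inert, $\mathcal{O}_F\otimes\Z_p$ is the ring of integers of the unramified quadratic extension, so the $p$-torsion is a module over $\F_{p^2}$ and its $\F_p$-rank is even; hence the $p$-rank lies in $\{0,2\}$, every non-ordinary fiber has $p$-rank $0$, and supersingular coincides with non-ordinary, giving $\mathrm{ss}_\fp\equiv\mathrm{lcm}(\mathrm{ph}_{\fp,1},\mathrm{ph}_{\fp,2})$. When $p$ splits, the idempotents of $\mathcal{O}_F\otimes\Z_p\cong\Z_p\times\Z_p$ decompose the $p$-divisible group into two height-two, dimension-one summands; a fiber is supersingular exactly when both summands are connected, i.e.\ when both partial Hasse invariants vanish, and then each summand is the unique $p$-divisible group of a supersingular elliptic curve, forcing $a$-number $2$ and hence superspecial. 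This yields supersingular $=$ superspecial $=\bigcap_j D_j$, whence $\mathrm{ss}_\fp\equiv\gcd(\mathrm{ph}_{\fp,1},\mathrm{ph}_{\fp,2})$.

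The main obstacle is the Dieudonn\'e-theoretic input in (ii) and (iii): identifying the superspecial locus with the common zero locus of all partial Hasse invariants, and controlling the $p$-rank and Newton-polygon behaviour across the inert and split cases. These are standard in the Andreatta--Goren theory~\cite{AG} of Hilbert modular varieties in characteristic $p$, and once they are invoked the remainder is the elementary bookkeeping of reducing $\gcd$ and $\mathrm{lcm}$ of the squarefree polynomials $\mathrm{ph}_{\fp,j}$ modulo $\fp$ described in the first paragraph.
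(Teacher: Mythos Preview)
Your proposal is correct and follows essentially the same route as the paper: reduce to Theorem~\ref{thm:main}, use that each $\mathrm{ph}_{\fp,j}$ is squarefree so that $\gcd$/$\mathrm{lcm}$ compute intersection/union of zero loci, and then invoke the standard geometric identifications (i)--(iii). The only difference is one of citation and level of detail: the paper's proof is a two-sentence appeal to ``the definition of the partial Hasse polynomials'' for the first two congruences and to Bachmat--Goren~\cite{BG} for the $g=2$ case, whereas you spell out the Dieudonn\'e-theoretic reasons behind (ii) and (iii) directly (parity of the $p$-rank in the inert case, the idempotent decomposition of the $p$-divisible group in the split case, and the $a$-number characterization of superspeciality). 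Your direct arguments are fine and recover exactly what Bachmat--Goren prove; citing them would shorten the write-up.
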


\begin{proof}
The statement follows from Theorem~\ref{thm:main} and the definition of the partial Hasse polynomials. For~$g=2$, the non-ordinary locus is the supersingular locus if~$p$ is inert in~$F$, and it is the superspecial locus (the intersection of the components of the non-ordinary locus) if~$p$ is split, as proven by Bachmat and Goren~\cite{BG}.
\end{proof}

\subsection{Hasse invariant and Atkin's polynomials}
\label{sec:Hasse}

We showed how to relate the non-ordinary locus of a genus-zero non-compact curve in a $g$-dimensional Hilbert modular variety to the zeros of~$g$ families of orthogonal polynomials. The goal of this section is to show that in principle only one family of orthogonal polynomials is needed. 

On the modular side, instead of the Hilbert partial Hasse invariants~$h_{j}$ on~$\mathcal{M}_{F,s}$ we consider the~\emph{Hasse invariant}~$h$ introduced in~Section~\ref{sec:Koba}. As mentioned there, the modular form $h$ (or a power of~$h$) admits a lift to a Hilbert modular form~$H_p$ in characteristic zero of parallel weight~$(p-1)$.
There are two ways to construct lifts of the Hasse invariant for a curve~$Y\hookrightarrow M_F$. The first one is by restricting the Hilbert modular form~$H_p$ to the curve~$Y$, i.e., by defining~$h_p(\tau)=h^Y_p(\tau):=H_p(\tau,\varphi_1(\tau),\dots,\varphi_g(\tau))$, where~$\varphi_1,\dots,\varphi_g$ denote the components of the modular embedding of~$\Gamma$. We remark that, even in the case where~$H_p$ is a Hilbert Eisenstein series, the restriction to~$Y$ is not in general an Eisenstein series on~$\Gamma$ (see~\cite{BLSpan} for a detailed study of this phenomenon for~$\Gamma=\SL_2(\Z)$). 

The second method consists in taking the product of the lifts of the partial Hasse invariants~$h_{j,p}$. In any case, the lift of the Hasse invariant~$h_p$ has the following properties: its reduction modulo~$\fp$ has zeros only at the non-ordinary locus of~$\mathcal{X}_{\overline{\fp}}$, for every prime~$\fp\subset\mathcal{O}_K$ over~$p$, and its~$t$-expansion at every cusp is constant modulo~$p$. Moreover, it is a twisted modular form of parallel weight~$(p-1)$. 

On the orthogonal polynomials side we do the following. Let~$P_1,\dots,P_g$ be the twisted quasimodular forms defined in~\eqref{eq:Pj}. Define~$P:=\sum_{j=1}^g\lambda_jP_j$ and the Atkin scalar product on the space of polynomials~$K[J]$
\[
\langle g,h \rangle\:= \text{ the constant term in the Laurent $t$-expansion of }-g\cdot h\frac{J P}{J'}\,\quad g,h\in K[J]\,.
\]
As before, we define the Atkin polynomials~$\{A_n(J)\}_n$ as the family of orthogonal polynomials obtained via the Gram–Schmidt orthonormalization process. 

Observe that if~$h_p$ is  lift of the Hasse invariant for~$Y$, then the~$t$-expansion at~$i\infty$ of its Serre derivative satisfies
\[
\vartheta{h_p}\=Dh_p+(p-1)\frac{\chi(\Gamma)}{2}\sum_{j=1}^g\lambda_jP_j\cdot h_p\;\equiv\; -\frac{\chi(\Gamma)}{2}\cdot P\mod p\,,
\]
since~$h_p(t)\equiv 1\mod p$. This is analogous to the calculation~\eqref{eq:Serrecong} for the partial Hasse invariants. The rest of the proof of Theorem~\ref{thm:main} can be repeated verbatim by replacing~$h_{p,j}$ with the Hasse invariant~$h_p$ and~$\langle\,,\rangle_j$ with the new product~$\langle\,,\rangle$ to prove the following result. 

\begin{theorem}
\label{thm:main2}
Let~$Y\hookrightarrow M_F$ be a non-compact genus-zero curve in a Hilbert modular variety of dimension~$g$ with integral model over~$\mathcal{O}_K[S^{-1}]$. Assume that the family of Atkin orthogonal polynomials~$\{A_{n}(J)\}_n$ exists on~$K[J]$. For every rational prime~$p\not\in S$ there exists~$n_{p}\in\Z_{\ge0}$ such that~$A_{n_p}(J)$ has~$p$-integral coefficients and
\[
A_{n_p}(J)\equiv\mathrm{no}_{p}(J)\mod p\,.
\]
\end{theorem}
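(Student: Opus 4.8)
The plan is to run the proof of Theorem~\ref{thm:main} once more, replacing the partial Hasse lifts~$h_{p,j}$ by a single lift~$h_p$ of a power of the full Hasse invariant and the scalar products~$\langle\,,\rangle_j$ by the product~$\langle\,,\rangle$ attached to~$P=\sum_{j=1}^g\lambda_jP_j$, whose moment generating function is~$\Phi(t)=-P(t)/J'(t)$ by the analogue of~\eqref{eq:genfun}. Concretely I would take~$h_p:=\prod_{j=1}^g h_{p,j}$. Since the map~$j\mapsto j'$ of Theorem~\ref{thm:pHi} is a bijection, this~$h_p$ has trivial multiplier system and parallel weight~$N(p-1)$, its~$t$-expansion at~$i\infty$ is~$\equiv1\pmod p$, and multiplying the decompositions of the~$h_{p,j}$ produces a factorization
\[
h_p=\Delta^{m}\prod_{l=2}^{g}B_l^{c_l}\cdot\widetilde h_p(J),\qquad \widetilde h_p(J):=\prod_{j=1}^{g}\widetilde h_{p,j}(J),
\]
for suitable~$m\in\Z_{\ge0}$ and~$c_l\in\Z$, in which---crucially---no elliptic factor~$Q_i$ occurs, so that every factor other than~$\widetilde h_p(J)$ is annihilated by the Serre derivative (the identities~$\vartheta\Delta=\vartheta B_l=0$ are exactly those verified in the proof of Theorem~\ref{thm:main}, and~$\vartheta$ satisfies a Leibniz rule).

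With this choice the generating-function congruence~\eqref{eq:gfcong} goes through verbatim. Because~$h_p\equiv1\pmod p$, dividing by~$h_p$ is harmless modulo~$p$, and computing the Serre derivative in parallel weight~$N(p-1)\equiv -N$ gives~$\vartheta h_p\equiv-\tfrac{\chi(\Gamma)N}{2}\,P\pmod p$, exactly as in~\eqref{eq:Serrecong}. Combining this with~$\vartheta h_p=\Delta^{m}\prod_l B_l^{c_l}\,J'\,\tfrac{d}{dJ}\widetilde h_p(J)$ yields
\[
\Phi(t)=-\frac{P}{J'}\;\equiv\;\frac{2}{\chi(\Gamma)N}\,\frac{\vartheta h_p}{J'\,h_p}\;=\;\frac{2}{\chi(\Gamma)N}\,\frac{\tfrac{d}{dJ}\widetilde h_p(J)}{\widetilde h_p(J)}\pmod{\fp}\,,
\]
so that, modulo~$\fp$, the moment generating function is the logarithmic~$J$-derivative of the polynomial part~$\widetilde h_p$, up to a nonzero constant.

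The step I expect to be the main obstacle is the analogue of the gcd congruence~\eqref{eq:gcd} for~$\widetilde h_p$, namely
\[
\frac{\widetilde h_p(J)}{\gcd\!\bigl(\widetilde h_p(J),\tfrac{d}{dJ}\widetilde h_p(J)\bigr)}\;\equiv\;\mathrm{no}_\fp(J)\pmod{\fp}\,.
\]
Here the~$Q_i$-free decomposition pays off: the prefactors~$\Delta$ and~$B_l$ carry no zeros at the non-ordinary locus (their divisors sit at~$i\infty$ and nowhere respectively, and~$h_p$ has~$t$-expansion~$1$ at the cusps), so the entire non-ordinary locus is carried by~$\widetilde h_p$. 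Moreover, since the radical of a product is the least common multiple of the radicals and~\eqref{eq:gcd} identifies the radical of each~$\widetilde h_{p,j}$ modulo~$\fp$ with~$\mathrm{ph}_{\fp,j}(J)$, the radical of~$\widetilde h_p=\prod_j\widetilde h_{p,j}$ is~$\mathrm{lcm}_j\,\mathrm{ph}_{\fp,j}(J)$, which is precisely the squarefree polynomial~$\mathrm{no}_\fp(J)$ describing the non-ordinary locus as~$\bigcup_jD_j$ (a superspecial point lying on several~$D_j$ contributes a repeated factor to~$\widetilde h_p$ but a simple factor to its radical, matching the squarefreeness of~$\mathrm{no}_\fp$). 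Setting~$R_p(J):=\tfrac{2}{\chi(\Gamma)N}\,\tfrac{d}{dJ}\widetilde h_p\big/\gcd(\cdots)$, this~$R_p$ is coprime to~$\mathrm{no}_\fp$ modulo~$\fp$ and~$\Phi(J^{-1})\equiv R_p(J)/\mathrm{no}_\fp(J)\pmod{\fp}$.

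It then remains to close the argument by the orthogonal-polynomial/Padé mechanism, exactly as in Theorem~\ref{thm:main}. Writing~$n_p:=\deg\mathrm{no}_\fp$ (independent of~$\fp\mid p$), Part~2 of Proposition~\ref{prop:ortho} gives~$\Phi(J^{-1})=B_{n_p}(J)/A_{n_p}(J)+O(J^{-2n_p-1})$ with~$\deg B_{n_p}=n_p-1$; after scaling~$A_{n_p}$ and~$B_{n_p}$ by a power of a uniformizer of~$\fp$ to make them~$\fp$-integral, the polynomial~$B_{n_p}\,\mathrm{no}_\fp-R_p\,A_{n_p}$ is~$O(J^{-1})$ and hence identically zero modulo~$\fp$. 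Coprimality of~$R_p$ and~$\mathrm{no}_\fp$ forces~$\mathrm{no}_\fp\mid A_{n_p}\bmod\fp$, and the equality of degrees then yields both the~$\fp$-integrality of~$A_{n_p}$ and the congruence~$A_{n_p}(J)\equiv\mathrm{no}_\fp(J)\pmod{\fp}$. Since this holds for every~$\fp\mid p$, assembling over the residue fields gives~$A_{n_p}(J)\equiv\mathrm{no}_p(J)\pmod p$, as asserted.
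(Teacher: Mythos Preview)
Your proposal is correct and follows essentially the same route as the paper. The paper's own proof (Section~\ref{sec:Hasse}) takes a lift~$h_p$ of the full Hasse invariant---mentioning both the restriction of the Hilbert lift~$H_p$ and the product~$\prod_j h_{p,j}$ as admissible choices---computes the Serre derivative congruence~$\vartheta h_p\equiv -\tfrac{\chi(\Gamma)}{2}P\pmod p$, and then literally says ``the rest of the proof of Theorem~\ref{thm:main} can be repeated verbatim''; you have chosen the second option (hence parallel weight~$N(p-1)$ rather than~$(p-1)$, a harmless extra factor of~$N$) and written out in more detail the $Q_i$-free decomposition and the radical identity~$\mathrm{rad}\bigl(\prod_j\widetilde h_{p,j}\bigr)=\mathrm{lcm}_j(\mathrm{ph}_{\fp,j})=\mathrm{no}_\fp$ that the paper leaves implicit.
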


The integer~$n_p$ in the statement of Theorem~\ref{thm:main2} is the degree of the non-ordinary polynomial~$\mathrm{no}_p^\mathcal{Y}$. Differently from the integers~$n_{p,j}$ of Theorem~\ref{thm:main}, we do not know how to compute this number in general. The problem is that the divisor of the Hasse invariant~$h_p$ is not reduced, since the points of superspecial reduction, which are located at the intersection of different components of the non-ordinary locus, are counted more than once in general (the number~$n_p$ can be computed using Theorem~\ref{thm:main} though).
So while Theorem~\ref{thm:main2} has the advantage of describing the non-ordinary locus in terms of only one family of orthogonal polynomials, it is less informative in two ways: first it does not describe the intersection of~$\mathcal{X}_{\overline{\fp}}$ with the components of the non-ordinary locus of~$\mathcal{M}_{F,s}$; second, it cannot be directly used for the explicit computation of the non-ordinary locus. 

\section{Partial Hasse invariants from Padé approximants of solutions of Picard–Fuchs differential equations}
\label{sec:PF}

In this section we explain that families of orthogonal polynomials related to the non-ordinary locus of curves in Hilbert modular varieties arise from Picard–Fuchs differential equations. This point of view is more natural and more general; the definition of the Atkin polynomials in terms of modular forms is then a consequence of the modularity of the Picard–Fuchs differential equations. 

We first recall the notion of Padé approximation of a power series near~$\infty$. Let~$f(x)=\sum_{n=0}^\infty{f_nx^{-n-1}}$ be a power series in~$x^{-1}$, and let~$m\ge1$ be an integer. The~\emph{$[m-1,m]$-Padé approximant} of~$f(x)$ is the rational function
\[
R_m(x)\=\frac{\alpha_0+\alpha_1x+\cdots+\alpha_{m-1}x^{m-1}}{\beta_0+\beta_1x+\cdots+\beta_{m-1}x^{m-1}+x^m}
\]
with the characterizing property that~$R_m(x)=f(x)+O(x^{-2m-1})$. Note that we chose to normalize the Padé approximants by requiring that the denominator is a monic polynomial; other choices are possible. Padé approximation is a classical tool in the study of rational approximations; it turns out to be very useful in a variety of situations, and it is implemented in many computer algebra systems, for instance PARI/GP (see the example in Section~\ref{sec:Teich}). 

We re-state Part~2 of Proposition~\ref{prop:ortho} in terms of Padé approximants. 

\begin{lemma}
\label{lem:Pade}
Let~$\langle\,,\rangle$ be a scalar product on~$K[x]$ as in Proposition~\ref{prop:ortho}, and let~$\{P_n(x)\}_n$ be the family of orthogonal polynomials obtained via the Gram–Schmidt process applied to the basis~$\{x^n\}_{n\ge0}$, where~$P_n(x)$ is monic of degree~$n$, and let~$\{Q_n(x)\}_n$ be as in the proposition. Let
\[
\Phi(x)=\sum_{n=0}^\infty{\langle x^n,1\rangle x^{-n-1}}\in K[x^{-1}]
\]
be the generating function of moments of~$\langle\,,\rangle$. Then for every integer~$m\ge1$ the $[m-1,m]$-Padé approximant of~$\Phi(x)$ exists and is~$R_m(x)=\frac{Q_m(x)}{P_m(x)}$. 
\end{lemma}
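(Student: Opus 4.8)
The plan is to recognize Lemma~\ref{lem:Pade} as essentially a translation of the second part of Proposition~\ref{prop:ortho} into the language of Padé approximants: the analytic heart — the order of contact between $Q_m/P_m$ and $\Phi$ at infinity — is already supplied there, so what remains is to check that the pair $(Q_m,P_m)$ meets the normalization convention for an $[m-1,m]$-approximant, and then to add the standard uniqueness statement so that this ratio is genuinely \emph{the} approximant.

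First I would record the degree and normalization data. By construction via the Gram–Schmidt recursion~\eqref{eq:GSp}, each $P_n$ is monic of degree exactly $n$, so $P_m$ is a legitimate denominator for the normalized form in which the denominator is monic of degree $m$. For the numerator, I would argue by induction on the three-term recurrence of Proposition~\ref{prop:ortho}(1), applied to $\{Q_n\}$ with initial data $Q_0=0$ and $Q_1=\psi(1)$, that $\deg Q_n\le n-1$: the base case is immediate since $Q_1$ is constant, and in the inductive step $Q_{n+1}=(x-a_n)Q_n-b_nQ_{n-1}$ raises $\deg Q_n$ by at most one while the tail term $b_nQ_{n-1}$ has strictly smaller degree. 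Hence $Q_m$ has degree $\le m-1$, the required bound for the numerator.

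Next I would invoke Proposition~\ref{prop:ortho}(2) directly, which gives
\[
\frac{Q_m(x)}{P_m(x)}=\Phi(x)+O(x^{-2m-1})\,.
\]
Together with the facts of the previous paragraph, this exhibits $Q_m/P_m$ as a rational function with monic denominator of degree $m$, numerator of degree $\le m-1$, and the prescribed order of contact with $\Phi$ at infinity — which is exactly the defining property of the $[m-1,m]$-Padé approximant. This establishes existence and shows that $Q_m/P_m$ is one realization. To upgrade this to the stated equality I would prove uniqueness: if $A_1/B_1$ and $A_2/B_2$ both satisfy the normalization ($B_i$ monic of degree $m$, $\deg A_i\le m-1$) and both agree with $\Phi$ up to $O(x^{-2m-1})$, then their difference $\tfrac{A_1B_2-A_2B_1}{B_1B_2}$ is $O(x^{-2m-1})$; since its numerator has degree $\le 2m-1$ and its denominator is monic of degree $2m$, the quotient cannot decay faster than $x^{-2m}$ unless the numerator vanishes identically, forcing $A_1/B_1=A_2/B_2$.

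As for the main obstacle, there is essentially none of a conceptual kind, since Proposition~\ref{prop:ortho} carries the analytic weight. The only points demanding care are the degree bookkeeping for $Q_m$ and the matching of the Padé normalization (monic denominator) with the monic normalization of the orthogonal polynomials; in the uniqueness computation one must track the degree count precisely, as an off-by-one in the order of vanishing would break the argument.
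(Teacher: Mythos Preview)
Your proposal is correct and matches the paper's approach: the paper gives no explicit proof, introducing the lemma simply as a restatement of Part~2 of Proposition~\ref{prop:ortho} in the language of Padé approximants, and you have filled in exactly the bookkeeping (degree of $Q_m$, monic normalization, uniqueness) that this restatement implicitly requires. Note that the uniqueness you prove is already asserted in the last sentence of Proposition~\ref{prop:ortho}(2), so you could have invoked it directly rather than reproving it.
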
 

It is explained in Section~1 of~\cite{BPf} that to a curve~$Y$ in a $g$-dimensional Hilbert modular variety are attached~$g$ second-order (Picard–Fuchs) differential operators~$L_1,\dots,L_g$. Let~$\mathcal{X}\to\mathcal{Y}$ be the integral model of~$Y$ defined over~$R=\mathcal{O}_K[S^{-1}]$. In~\cite{BPf} it is proven that for every~$j\in\{1,\dots,g\}$ the differential equation~$L_jv=0$ admits a holomorphic solution in~$\mathcal{O}_K[S^{-1}][\![t]\!]$ at any maximal unipotent monodromy point. Moreover, the integral solutions for different~$j\in\{1,\dots,g\}$ are related by congruence relations modulo primes.  

The next theorem essentially states that suitable Padé approximation of the logarithmic derivative of the integral solutions computes the non-ordinary points of $\mathcal{X}_{\overline{\fp}}$ for every~$\fp$ over~$p\notin S$. 

\begin{theorem}
\label{thm:Pade}
Let~$\mathcal{Y}\to\mathcal{X}$ and~$R$ be as above, and let $L_{1},\dots,L_{g}$ be the associated Picard–Fuchs differential operators. For~$j=1,\dots,g$ let $y_{j}(t)\in 1+R[\![t]\!]$ be the normalized integral solution to~$L_{j}v=0$ in~$t=0$. Set $J:=t^{-1}$ and let $\frac{S_{j,m}(J)}{T_{j,m}(J)}$ be the~$[m-1,m]$-Padé approximant of the power series
\begin{equation}
\label{eq:gfPade}
\Phi_{j}(t)\:= t-\frac{2\cdot t^{2}}{\chi(Y)\cdot\lambda_{j}\cdot N}\frac{y_{j}'(t)}{y_{j}(t)}\,,
\end{equation}
where $N$ is the least common multiple of the orders of the elliptic points of $Y$, and $\lambda_{j}$ is the $j$-th Lyapunov exponent~\eqref{eq:Lyap}. Then for every $p\not\in S$ the polynomial~$T_{j,n_{p,j}}(J)$  has $p$-integral coefficients for $n_{p,j}=\deg(\mathrm{ph}_{p,j}(J))$ and it holds
\[
T_{j,n_{p,j}}(J)\;\equiv\;\mathrm{ph}_{p,j}(J)\mod p\,.
\]
\end{theorem}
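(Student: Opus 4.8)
The plan is to reduce Theorem~\ref{thm:Pade} to the already-proven Theorem~\ref{thm:main} by identifying the generating function~$\Phi_j(t)$ built from the Picard--Fuchs solution with the generating function of moments of the $j$-th Atkin scalar product, and then invoking Lemma~\ref{lem:Pade} to pass from orthogonal polynomials to Padé denominators. The bridge between the two worlds is the modularity of the Picard--Fuchs operators: the normalized integral solution $y_j(t)$ is (a power of) a period of the family $\mathcal{X}\to\mathcal{Y}$, and under the modular uniformization $\tau\mapsto t(\tau)$ it must be expressible through the twisted modular forms of Section~\ref{sec:tmf0}. So the first step is to make precise the relation between $y_j(t)$ and the modular form $\Delta_j$ (equivalently $\Delta\cdot B_j^{r_j}$) introduced in Remark~\ref{rmk:deltaj}.

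Concretely, I would first recall from~\cite{BPf} that the holomorphic solution $y_j$ at the maximal unipotent monodromy point is, up to the normalization $y_j(0)=1$, the period from which the maps $J_j$ in the proof of Proposition~\ref{prop:posdef} are reconstructed; on the modular side this period is a power of $\Delta_j$, so that $d\log y_j$ agrees with $d\log\Delta_j$ up to the explicit rational constant dictated by the weight of $\Delta_j$, namely $\tfrac{-2}{\chi(\Gamma)\lambda_j}$. Matching constants, the logarithmic-derivative combination
\[
t-\frac{2\,t^{2}}{\chi(Y)\cdot\lambda_j\cdot N}\,\frac{y_j'(t)}{y_j(t)}
\]
should collapse, modulo the $N$-normalization coming from the weight of $h_{p,j}$, to the closed form $-P_j(t)/J'(t)=t^{2}\,\tfrac{d\log\Delta(t)}{dt}$ of Equation~\eqref{eq:Pjv}. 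This is exactly the generating function $\Phi_j(t)=\sum_{m\ge0}\langle J^m,1\rangle_j\,t^{m+1}$ of the $j$-th Atkin scalar product established in Part~2 of Lemma~\ref{lem:sprod}. The factor of $N$ and the shift by the leading term $t$ are bookkeeping that must be checked against the weight $(0,\dots,-N,\dots,pN,\dots,0)$ of $h_{p,j}$ in Theorem~\ref{thm:pHi} and the definition~\eqref{eq:Serre} of the Serre derivative.

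Once the identification $\Phi_j^{\mathrm{Pad\acute{e}}}(t)=\Phi_j^{\mathrm{Atkin}}(t)$ is in place, the remainder is formal. By Lemma~\ref{lem:Pade}, the denominator $T_{j,m}(J)$ of the $[m-1,m]$-Padé approximant of $\Phi_j$ equals the $j$-th Atkin orthogonal polynomial $A_{j,m}(J)=P_m(J)$, whenever the latter is defined; this already yields the final sentence of the theorem identifying the family of Padé denominators with the Atkin polynomials. Specializing to $m=n_{p,j}=\deg(\mathrm{ph}_{p,j})$ and applying Theorem~\ref{thm:main} then gives $T_{j,n_{p,j}}(J)=A_{j,n_{p,j}}(J)\equiv\mathrm{ph}_{p,j}(J)\bmod p$, together with the $p$-integrality of the coefficients. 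A subtlety is that Lemma~\ref{lem:Pade} presupposes the orthogonal polynomials exist over $K$, i.e.\ positive definiteness, whereas Theorem~\ref{thm:Pade} is stated unconditionally; here I would argue that the Padé denominator is well defined and $p$-integral directly from the reduction argument in the proof of Theorem~\ref{thm:main}, since that argument only uses the congruence~\eqref{eq:gfcong2} for the single relevant index $n_{p,j}$ and the coprimality of $R_{p,j}$ with $\mathrm{ph}_{\fp,j}$, not the global existence of the orthogonal family.

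The main obstacle I anticipate is the first step: pinning down the precise proportionality between the Picard--Fuchs period $y_j(t)$ and the modular object $\Delta_j$, including getting every rational constant ($\chi$, $\lambda_j$, $N$, and the multiplicities from the weight of $h_{p,j}$) to cancel exactly so that the two generating functions coincide on the nose rather than merely up to an undetermined scalar. This requires carefully invoking the explicit dictionary of~\cite{BPf} between solutions of $L_j v=0$ and twisted modular forms, and checking that the leading-order normalization $y_j=1+O(t)$ matches the normalization of $\Delta_j$ near $i\infty$; a constant-factor error would still produce the correct \emph{monic} Padé denominator after normalization, so in fact only the logarithmic derivative needs to match, which mitigates the difficulty but must be verified. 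I do not foresee obstacles in the Padé/orthogonal-polynomial step, which is purely the classical content of Proposition~\ref{prop:ortho} and Lemma~\ref{lem:Pade}.
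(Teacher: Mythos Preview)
Your plan is logically sound but takes a genuinely different route from the paper. You reduce Theorem~\ref{thm:Pade} to Theorem~\ref{thm:main} by using modularity to identify the Picard--Fuchs generating function with the Atkin moment generating function (this identification is precisely the content of Remark~\ref{rmk:PtoA}, carried out there in the torsion-free case), then invoking Lemma~\ref{lem:Pade} and Theorem~\ref{thm:main}. The paper proceeds in the opposite direction: it proves Theorem~\ref{thm:Pade} \emph{directly}, without appealing to modularity or to Theorem~\ref{thm:main}. Working modulo a prime~$\fp$, it uses the structure theory of nilpotent second-order Fuchsian operators in characteristic~$p$ (after Dwork) to write the reduced solution as a polynomial times a function of~$t^p$, so that its logarithmic derivative is a rational function whose denominator is, via the Hasse--Witt interpretation of the polynomial solution from~\cite{BPf}, exactly~$\mathrm{ph}_{\fp,j}(J)$; the Pad\'e conclusion then follows by the same endgame as in Theorem~\ref{thm:main}. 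The payoff of the paper's approach is that it never uses the modular embedding, so the argument extends to Picard--Fuchs equations of one-parameter families that are not known to be modular---this is the whole point of stating Theorem~\ref{thm:Pade} separately, and Remark~\ref{rmk:PtoA} derives Theorem~\ref{thm:main} from Theorem~\ref{thm:Pade} rather than the reverse. Your route is shorter once Theorem~\ref{thm:main} is in hand, and you correctly flag and handle the positive-definiteness issue (only the single index~$n_{p,j}$ matters, not the existence of the full orthogonal family), but it forfeits the extra generality that motivates the theorem.
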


From Theorem~\ref{thm:Pade} and the classical relation between denominators of Padé approximation and orthogonal polynomials (Lemma~\ref{lem:Pade}) the following corollary follows.

\begin{corollary}
For each $j=1,\dots,g$ there exists a family of orthogonal polynomials~$\{T_{j,n}(J)\}_{n}\subset K[J]$ such that, for every $p\not\in S$ the polynomial~$T_{j,n_{p,j}}(J)$  has $p$-integral coefficients for $n_{p,j}=\deg(\mathrm{ph}_{p,j}(J))$ and satisfies
\[
T_{j,n_{p,j}}(J)\;\equiv\;\mathrm{ph}_{p,j}(J)\mod p\,.
\]
\end{corollary}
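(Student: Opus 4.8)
The plan is to read the Corollary as the combination of Theorem~\ref{thm:Pade} with the classical dictionary between Padé denominators and orthogonal polynomials recorded in Lemma~\ref{lem:Pade}; once $\Phi_j$ is recognized as a generating function of moments, only bookkeeping remains. First I would verify that $\Phi_j(t)$ has the shape required of such a generating function. Since $y_j(t)=1+O(t)$ lies in $1+R[\![t]\!]$, its logarithmic derivative $y_j'(t)/y_j(t)$ (with $'$ the ordinary derivative in $t$) is again a power series in $t$, so the correction term $-\tfrac{2t^2}{\chi(Y)\lambda_j N}\,y_j'/y_j$ is $O(t^2)$ and $\Phi_j(t)=t+O(t^2)$. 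Writing $J=t^{-1}$, this reads $\Phi_j(t)=\sum_{n\ge0}c_{j,n}J^{-n-1}$ with $c_{j,n}\in K$ and $c_{j,0}=1$; in particular $\Phi_j$ is a power series in $J^{-1}$ beginning at $J^{-1}$, which is exactly the normalization for Padé approximation at $\infty$. I would then introduce the linear functional $\psi_j\colon K[J]\to K$ defined by $\psi_j(J^n)=c_{j,n}$ and the associated scalar product $\langle g,h\rangle_j=\psi_j(gh)$, so that $\Phi_j(t)=\sum_{n\ge0}\langle J^n,1\rangle_j J^{-n-1}$ is precisely the generating function of its moments in the sense of the Proposition of Section~\ref{sec:ortho}.

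Next I would apply Lemma~\ref{lem:Pade} to this scalar product: its conclusion is that the denominator of the $[m-1,m]$-Padé approximant of $\Phi_j$ is the monic orthogonal polynomial $T_{j,m}(J)$ produced by Gram–Schmidt on $\{J^n\}_{n\ge0}$, whenever the latter is defined. This directly furnishes the desired family $\{T_{j,n}(J)\}_n\subset K[J]$ of orthogonal polynomials. When the Atkin scalar product is non-degenerate, this family coincides with the Atkin polynomials $A_{j,n}$, because the generating function $\Phi_j$ agrees with $-P_j/J'$ of the $j$-th Atkin scalar product, as established in Theorem~\ref{thm:noPF}. Finally, Theorem~\ref{thm:Pade} supplies the arithmetic content: for every prime $p\notin S$ the member $T_{j,n_{p,j}}(J)$ with $n_{p,j}=\deg\mathrm{ph}_{p,j}$ has $p$-integral coefficients and reduces to $\mathrm{ph}_{p,j}(J)$ modulo $p$. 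Concatenating these observations yields the statement.

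The only genuine subtlety, and the step I expect to be the main obstacle, is the well-definedness of the \emph{entire} family $\{T_{j,n}\}_n$, namely the non-vanishing of $\langle P_m,P_m\rangle_j$ at each stage of the Gram–Schmidt process, since Lemma~\ref{lem:Pade} presupposes that the orthogonal polynomials exist (equivalently, that the leading Hankel minors of $(c_{j,a+b})_{a,b}$ do not vanish). For the finitely many indices $n_{p,j}$ relevant to the congruences this is automatic from the existence of the corresponding Padé approximant asserted in Theorem~\ref{thm:Pade}; for the full family it follows from the positive-definiteness proved in Proposition~\ref{prop:posdef} whenever $Y$ is defined over $\R$. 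I would therefore either restrict to such curves or phrase the conclusion as valid for all $n$ for which the $[n-1,n]$-Padé approximant of $\Phi_j$ exists, which in any event covers all the partial Hasse polynomials $\mathrm{ph}_{p,j}$.
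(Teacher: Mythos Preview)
Your proposal is correct and follows the same route as the paper, which deduces the corollary directly from Theorem~\ref{thm:Pade} together with Lemma~\ref{lem:Pade}. Your extra discussion of the non-degeneracy needed for the Gram--Schmidt process is a welcome elaboration of a point the paper leaves implicit.
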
 

\begin{remark}
\label{rmk:PtoA}
As the corollary suggests, and we show below in detail, Theorem~\ref{thm:Pade} implies Theorem~\ref{thm:main}.
One can recover the definition of Atkin's polynomials in virtue of the modularity of solutions of the Picard–Fuchs differential equations of Kobayashi geodesics (see Section~1 in~\cite{BPf}). Explicitly, modularity means that the holomorphic solution~$y_j(t)$ of the differential equation~$L_jv=0$ can be lifted to a twisted modular form~$f_j(\tau)$ of weight~$(0,\dots,0,1,0,\dots,0)$ with~$1$ in the~$j$-th position. In other words, considering~$t$ as a modular function, the~$t$-expansion of~$f_j(\tau)$ at the cusp where~$t$ has a zero is~$y_j(t)$. In particular it holds for the derivative~$\frac{d}{dt}y_j(t)=\frac{1}{t'}\frac{d}{d\tau}f_j(\tau)$, where~$t'=\frac{d}{d\tau}t$. Assume for simplicity that~$Y\simeq\Po/\Gamma$ is an~$n$-punctured sphere and that~$\Gamma$ is torsion-free, which implies that~$\chi(Y)=2-n$. By using the fact that~$f_j$ has zeros of known order precisely where~$t$ has a cusp (see Section 2 of~\cite{BPf}), one proves the identity 
\begin{equation}
\label{eq:fjdj}
f_j(\tau)^{2\lambda_{j,2}}\cdot t(\tau)^{(n-2)\lambda_{j,1}}\=\Delta_j(\tau)^{(n-2)\lambda_{j,1}}\,,
\end{equation}
where~$\lambda_j=\frac{\lambda_{j,1}}{\lambda_{j,2}}$ and~$\Delta_j$ is as in Remark~\ref{rmk:deltaj}. For~$\Phi_j(t)$ as in Theorem~\ref{thm:Pade} it follows then
\[
\Phi_j(t)\=t\+\frac{2t^2}{(n-2)\lambda_j}\frac{d\log(y_j(t))}{dt}=\frac{t^2}{\lambda_{j,1}(n-2)}\frac{d\log(t^{(n-2)\lambda_{j,1}}\cdot y_j(t)^{2\lambda_{j,2}})}{dt}\,.
\]
By exploiting the modularity of~$y_j(t)$ and the relation~\eqref{eq:fjdj} it follows
\[
\Phi_j(t)\=\frac{t(\tau)^2}{\lambda_{j,1}(n-2)}\frac{d\log(\Delta_j(\tau)^{(n-2)\lambda_{j,1}})}{t'(\tau)\cdot d\tau}\=\frac{t(\tau)^2}{t'(\tau)}\frac{\Delta'_j(\tau)}{\Delta_j(\tau)}\=-\frac{P_j(\tau)}{J'(\tau)}\,,
\]
where~$J(\tau)=\frac{1}{t(\tau)}$. This is the generating function of moments of the Atkin polynomials defined in Section~\ref{sec:main}, and Theorem~\ref{thm:main} follows from Theorem~\ref{thm:Pade}. Nevertheless, Theorem~\ref{thm:Pade} does not assume modularity, and the relation between Padé approximation and non-ordinary locus can in principle be applied to one-parameter families of abelian varieties whose Picard–Fuchs differential equations admit integral solutions, but not necessarily modular. 
\end{remark}

\begin{proof}[Proof of Theorem~\ref{thm:Pade}]
Let $p\not \in S$ be a prime, and let $\fp\subset R$ be a prime above $p$. The theorem can be proven directly by using the modulo~$\fp$ congruences for the integral solutions of the Picard–Fuchs operators~$L_j$ discovered in~\cite{BPf}. However, these congruences are special to curves in Hilbert modular varieties; we propose then a more general argument, that can be adapted to work also in other cases.  

Let~$L$ be a nilpotent second-order Fuchsian differential operator over a characteristic~$p\ne2$ field~$k$. Assume further that the local exponents~$\gamma_1,\dots,\gamma_m,\gamma_\infty$ at the regular singular points~$t_1,\dots,t_m,\infty$ of~$L$ are in~$\mathbb{F}_p$ and that~$Lv=0$ does not have two solutions over~$k[[t]]$ independent over~$k(t^p)$ (see Chapter 9 of~\cite{Dwork} for definitions and details). Then, as written in~\cite{Dwork}, every solution~$\bar{y}$ of~$Lv=0$ is of the form
\begin{equation}
\label{eq:bary}
\bar{y}\= g(t)\prod_{i=1}^m(t-t_i)^{\widetilde{\gamma}_i}\cdot h(t^p)\quad\in k(t)\,,
\end{equation}
where~$g(t)\in k[t]$ is such that~$(t-t_i)\nmid g(t)$ for every~$i\in\{1,\dots,m\}$, and $\widetilde{\gamma}_i\in[0,\dots,p-1]$, and~$h(t^p)\in k(t^p)$. The polynomial~$g(t)\prod_{i=1}^m(t-t_i)^{\widetilde{\gamma}_i}$ is itself a polynomial solution of~$Lv=0$. 

A simple computation shows that the logarithmic derivative~$\bar{y}'(t)/\bar{y}(t)$ is a rational function of~$t$
\begin{equation}
\label{eq:logdp}
\frac{\bar{y}'(t)}{\bar{y}(t)}\=\frac{\bigl(g(t)\prod_{i=1}^m(t-t_i)^{\widetilde{\gamma}_i}\bigl)'}{g(t)\prod_{i=1}^m(t-t_i)^{\widetilde{\gamma}_i}}\=\frac{A(t)}{g(t)\prod_{i=1}^m(t-t_i)^{\epsilon_i}}\,,
\end{equation}
where~$\epsilon_i\ne0$ only if~$\gamma_i\ne0$, and~$\deg\bigl(A(t)\bigr)=n-1$ for $n=\deg\bigl(g(t)\prod_i(t-t_i)^{\epsilon_i}\bigr)$. 

Assume now that the differential operator~$L$ comes from reduction modulo~$\fp$ of a differential operator defined over the ring~$R$ admitting a~$\fp$-integral solution~$y$, and that~$\bar{y}$ is the modulo~$\fp$ reduction of this solution. The same argument used at the end of the proof of Theorem~\ref{thm:main} proves that the denominator of the~$[n-1,n]$-Padé approximant of~$y'/y$ is~$g(t)\prod_{i=1}^m(t-t_i)^{\epsilon_i}$, i.e., the denominator of the right-hand side of~\eqref{eq:logdp}. In particular, the zeros of the denominator of the Padé approximant modulo~$\fp$ are the zeros of the minimal polynomial solution modulo~$\fp$. The point is that this polynomial solution, in the case the differential operator~$L$ comes from geometry, often carries arithmetic information on the underlying family of varieties. We work out the details in the case~$L$ is the reduction of one of the Picard–Fuchs operators of a Kobayashi geodesic. 

In the case of Kobayashi geodesics, as explained in Corollary 2 of~\cite{BPf}, the polynomial solutions of the modulo~$\fp$ reduction of the Picard–Fuchs differential operators are essentially entries of the Hasse–Witt matrix of the associated family of abelian varieties~\eqref{eq:modelp}. Therefore, they carry information on the non-ordinary locus, in a way we make precise below. In this paper we consider partial Hasse polynomials in the variable~$J$ corresponding to a Hauptmodul with a pole at~$i\infty$, which is the reciprocal of the Hauptmodul~$t$ we use in the Picard–Fuchs differential equations, so we do a change of variable first. Let~$\alpha_{\fp,j}(t)$ be the polynomial solution modulo~$\fp$ of the Picard–Fuchs equation~$L_jv=0$ of the form~$\alpha_{\fp,j}(t)=g_j(t)\prod_{i=1}^m(t-t_{j,i})^{\widetilde{\gamma}_{j,i}}$, where $g(t)$ and~$\widetilde{\gamma}_{j,1},\dots,\widetilde{\gamma}_{j,m}$ satisfy the assumptions stated after~\eqref{eq:bary}. Define
\[
\beta_{\fp,j}(J)\:=\alpha_{\fp,j}(J^{-1})\cdot J^{d_{\fp,j}}\,,
\]
where $d_{\fp,j}=\deg(\alpha_{\fp,j}(t))=\frac{\chi(Y)}{2}\cdot N\cdot(\lambda_{j}-p\lambda_{j'})$, as proven in Theorem 3 of~\cite{BPf}. Then it holds (Corollary 2 of~\cite{BPf})
\begin{equation}
\label{eq:beta}
\frac{\beta_{\fp,j}(J)}{\gcd(\beta_{\fp,j}(J),\tfrac{d}{dJ}\beta_{\fp,j}(J))}\= \mathrm{ph}_{\fp,j}(J)\,.
\end{equation}
By using \eqref{eq:logdp} and noticing that $d_{\fp,j}\equiv \frac{\chi(Y)}{2}\lambda_{j}N\mod\fp$, one computes from the definition of $\Phi_{j}(J)$ that
\[
\Phi_{j}(t)\=t-\frac{2t^{2}}{\chi(Y)\lambda_{j}N}\frac{y'_{j}(t)}{y_{j}(t)}\;\equiv\; t -\frac{2t^{2}}{\chi(Y)\lambda_{j}N}\frac{\alpha'_{\fp,j}(t)}{\alpha_{\fp,j}(t)}\;\equiv\;\frac{2\bigl(d_{\fp,j}\cdot t\alpha_{\fp,j}(t)-t^{2}\alpha'_{\fp,j}(t)\bigr)}{\chi(Y)\lambda_{j}N\cdot\alpha_{\fp,j}(t)}\mod\fp\,.
\]
From the definition of $\beta_{\fp,j}(J)$, the relation $J=t^{-1}$, and Equation~\eqref{eq:beta}, it finally follows that
\[
\Phi_{j}(J)\;\equiv\; \frac{2}{\chi(Y)\lambda_{j}N}\frac{\tfrac{d}{dJ}\beta_{\fp,j}(J)}{\beta_{\fp,j}(J)}\=\frac{R_{\fp,j}(J)}{\mathrm{ph}_{\fp,j}(J)}\mod\fp\,,
\]
for some polynomial $R_{\fp,j}(J)$ of degree $n_{\fp,j}-1$ and coprime with $\mathrm{ph}_{\fp,j}(J)$ by construction.  

The $[n_{\fp,j}-1,n_{\fp,j}]$-Padé approximation of $\Phi_{j}(J)$ gives then
\[
\frac{S_{j,n_{\fp,j}}(J)}{T_{j,n_{\fp,j}}(J)}\=\Phi_{j}(J)\+O(J^{-2n_{\fp,j}-1})\;\equiv\;\frac{R_{\fp,j}(J)}{\mathrm{ph}_{\fp,j}(J)}\mod\fp\,.
\]
The same argument used at the end of the proof of Theorem~\ref{thm:main} proves that~$T_{j,n_{\fp,j}}(J)$ has~$\fp$-integral coefficients and that 
\[
T_{j,n_{\fp,j}}(J)\;\equiv\;\mathrm{ph}_{\fp,j}(J)\mod \fp\,.
\]
\end{proof}

\section{Examples}
\label{sec:examples}

\subsection{Arithmetic cases}

The case of~$\SL_2(\Z)$ (the original Atkin–Kaneko–Zagier result~\cite{KZ}) can be recovered from our result by considering the modular embedding of~$\Po/\SL_2(\Z)$ in a Hilbert modular surface via the diagonal embedding~$\tau\mapsto(\tau,\tau)$. Then~$\varphi_2(\tau)=\tau$ and~$B_2=1$, and the two scalar products~$\langle\,,\rangle_j$ for~$j=1,2$ are the same. This holds because in this case we consider elliptic curves, and there is only one Hasse invariant. 

Similarly, the case of genus-zero subgroups of~$\SL_2(\Z)$ (see for instance~\cite{Sa} and~\cite{Ts}) can be derived from our results by considering the curves~$\Po/\Gamma_0(N)$ as Hirzebruch–Zagier curves in a Hilbert modular surface.

\subsection{Affine triangle curves~$\Po/\Delta(n,m,\infty)$}
\label{sec:tri}

Affine triangle curves have a modular embedding in Hilbert modular varieties~\cite{CW}. We determine explicitly the recursion for their families of Atkin's polynomials. 

Twisted modular forms for the triangle groups~$\Delta(n,m,\infty)$, with~$n,m\ge2$ and~$n+m\ge5$ (hyperbolicity condition) were studied in~\cite{BN}. Let~$\Po/\Delta(n,m,\infty)$ have an embedding in a Hilbert modular variety of dimension~$g$. Then for~$j=1,\dots,g$ there exist non-zero integers~$k_j\le n$ and~$r_j\le m$ such that, if~$N_j:=mn-nr_j-mk_j$, then the integral solutions of the Picard–Fuchs equations arising from~$\Po/\Delta(n,m,\infty)$ are classical Gauss hypergeometric functions
\begin{equation}
\label{eq:hgj}
y_j(t)\={}_2F_1\Bigl(\frac{N_j}{2nm},\frac{N_j+2nr_j}{2nm},1;t\Bigr)\,,
\end{equation}
where
\[
{}_2F_1(a,b,c;t)\:=\sum_{i=0}^\infty{\frac{(a)_i(b)_i}{(c)_i i!}t^i}\,,\quad (x)_i=x\cdot(x+1)\cdots(x+i-1)\,.
\] 
From Corollary 3.3 of~\cite{BN} it follows, with the notation of this paper, that
\[
y_j(t)^{2nm}\cdot t^{N_j}\=\Delta_j^{N_j}\,.
\]
A calculation similar to the one in Remark~\ref{rmk:PtoA} prove that, for the generating function of moments~$\Phi_j(t)$ attached to the Atkin scalar products on~$\Delta(n,m,\infty)$, it holds
\begin{equation}
\label{eq:logPhi}
\Phi_j(t)\=\frac{-t}{N_j}\biggl(\frac{N_jy_j(t)+2nm\cdot t\cdot y_j'(t)}{y_j(t)}\biggr)\,.
\end{equation}
Gauss's contiguous relations and~\eqref{eq:hgj} permit to express~$t\cdot y_j'(t)$ in terms of contiguous hypergeometric series; it follows that
\[
\Phi_j(t)\=\frac{t\cdot{}_2F_1\Bigl(\frac{N_j}{2nm}+1,\frac{N_j+2nr_j}{2nm},1;t\Bigr)}{{}_2F_1\Bigl(\frac{N_j}{2nm},\frac{N_j+2nr_j}{2nm},1;t\Bigr)}\,.
\]
Gauss also famously described the continued fraction expansion of a quotient of contiguous hypergeometric functions~\cite{Gauss}:
\[
\frac{{}_2F_1(a+1,b;c;z)}{{}_2F_1(a,b;c;z)}\=\frac{1}{1-\frac{\lambda_1z}{1-\frac{\lambda_2z}{1-\ddots}}}\,
\]
with~$\lambda_1=b/c$ and
\[
\lambda_{2k}\=\frac{(a+k)(c-b+k-1)}{(c+2k-2)(c+2k-1)}\,,\quad\lambda_{2k+1}\=\frac{(b+k)(c-a-1+k)}{(c+2k-1)(c+2k)}\,.
\]
By Part 3 of Proposition~\ref{prop:ortho}, the recursion formula for the families of Atkin polynomials is determined by the above values~$\lambda_k$. The final result is stated in the next proposition.

\begin{proposition}
\label{prop:rect}
Let~$\Delta(n,m,\infty)$ be a hyperbolic triangle group and consider the modular embedding~$\Po/\Delta(n,m,\infty)\hookrightarrow M_F$ into a Hilbert modular variety of dimension~$g$. Let~$N_j$ and~$r_j$ be as in~\eqref{eq:hgj}. 
For~$j=1,\dots,g$, the~$j$-th family of Atkin polynomials~$\{A_{j,k}\}_k$ attached to~$\Delta(n,m,\infty)$ satisfy the recursion
\[
A_{j,k+1}(J)\=(J-a_{j,k})\cdot A_{j,k}(J)-b_{j,k}\cdot A_{j,k-1}(J)\,,\quad k\ge1
\]
for
\[
a_{j,k}\=\frac{2k^2-(2N_j^2-\frac{2N_jr_j}{m}-\frac{r_j}{m})}{4k^2-1}\,,\quad b_{j,k}\=\frac{(k+N_j)(k-N_j-1)(k-N_j-\frac{r_j}{m})(k+N_j+\frac{r_j}{m}-1)}{k(k-1)(2k-1)^2}
\]
and initial datum~$A_{j,0}(J)=1$ and~$A_{j,1}(J)=J-\frac{N_j+nr_j}{2mn}$.
\end{proposition}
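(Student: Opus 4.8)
The discussion preceding the statement already reduces the claim to a mechanical computation, so the plan is to make that reduction explicit and then carry out the elimination. Setting $a=N_j/(2nm)$, $b=(N_j+2nr_j)/(2nm)$ and $c=1$, the contiguous relation derived above identifies $\Phi_j(t)/t$ with the hypergeometric ratio ${}_2F_1(a+1,b;c;t)/{}_2F_1(a,b;c;t)$, up to an overall sign that does not affect monic orthogonal polynomials. By Lemma~\ref{lem:sprod} this ratio is the ordinary generating function $\sum_m\langle J^m,1\rangle_j\,t^m$ of the moments entering Part~3 of Proposition~\ref{prop:ortho}, with formal variable $t$. Hence the continued-fraction coefficients $\lambda_k$ of that generating function are precisely the ones in Gauss's expansion recalled above, and Part~3 of Proposition~\ref{prop:ortho} converts them into the three-term coefficients through $a_{j,k}=\lambda_{2k}+\lambda_{2k+1}$ and $b_{j,k}=\lambda_{2k-1}\lambda_{2k}$ for $k\ge1$, together with $a_{j,0}=\lambda_1$ coming from the contraction of the Stieltjes fraction to a Jacobi fraction.

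The remaining work is the simplification into the closed forms in the statement. For $b_{j,k}$ I would multiply the expressions for $\lambda_{2k-1}$ and $\lambda_{2k}$, use $c=1$ to reduce their denominators to $(2k-2)(2k-1)$ and $(2k-1)(2k)$, and collect the four resulting linear factors. For $a_{j,k}$ I would place $\lambda_{2k}$ and $\lambda_{2k+1}$ over the common denominator $4k^2-1$; the $k^2$-terms of the combined numerator cancel and a factor $2k$ drops out, leaving a quadratic in $k$ in which the parameters occur only through $a-b=-r_j/m$ and $ab$, so that substituting these two combinations yields the stated $a_{j,k}$. The initial datum then follows directly: $A_{j,0}=1$ by monicity, and $A_{j,1}(J)=J-a_{j,0}$ with $a_{j,0}=\lambda_1=b/c$, which is to be matched against the chosen normalization of $J$ to give the constant in the statement.

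The step I expect to cause the most friction is not the elimination but the bookkeeping of the Hauptmodul normalization. The recursion coefficients of a family of monic orthogonal polynomials are not invariant under an affine change $J\mapsto\alpha J+\beta$ of variable (under which $a_{j,k}\mapsto\alpha\,a_{j,k}+\beta$ and $b_{j,k}\mapsto\alpha^2\,b_{j,k}$), so the clean formulas hold only for the specific Hauptmodul for which $\Phi_j$ takes the form~\eqref{eq:logPhi}. I would therefore fix that normalization at the very start, track the scaling constant $2nm$ carefully so that it does not contaminate the lower-order terms of $a_{j,k}$ and $b_{j,k}$, and cross-check the result on the arithmetic specialization $(n,m)=(2,3)$, where $\Gamma=\SL_2(\Z)$ and the recursion must recover that of Kaneko--Zagier. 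Getting these normalizing factors consistent across $\lambda_1$, $a_{j,k}$ and $b_{j,k}$ simultaneously is the one place where the argument can go wrong.
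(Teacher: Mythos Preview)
Your proposal is correct and follows essentially the same route as the paper: the recursion coefficients are extracted from Gauss's continued fraction via Part~3 of the orthogonal-polynomial proposition, and the initial datum is read off from the first moment. The only cosmetic difference is that the paper computes $A_{j,1}$ by identifying $\langle J,1\rangle_j$ as the coefficient of $t^2$ in $\Phi_j(t)$ directly from~\eqref{eq:logPhi} and the hypergeometric coefficient $y_{j,1}=ab$, whereas you go through $\lambda_1=b/c$; these are equivalent, and your caution about the Hauptmodul normalization is well placed.
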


\begin{proof}
Only the values of the initial data are left to be proven. From the definition of~$P_n(x)$ in~\eqref{eq:GSp} it follows immediately that~$A_{j,0}(J)=1$ and that~$A_{j,1}(J)=J-\langle J,1\rangle_j$. The constant term of~$A_{j,1}(J)$ is nothing but the coefficient of~$J^{-1}$ in the generating function of moments~$\Phi_j(J)$, or, equivalently, the coefficient of~$t^2$ in~$\Phi_j(t)$. By using the expression in~\eqref{eq:logPhi} one sees that~$\langle J,1\rangle_j$ is given by~$y_{j,1}\cdot\frac{2mn}{N_j}$, where~$y_j(t)=\sum_{l=0}^\infty{y_{j,l}t^l}$ is as in~\eqref{eq:hgj}. The explicit form of the hypergeometric solution implies that~$y_{j,1}=\frac{N_j}{2nm}\cdot\frac{N_j+2r_jn}{2nm}$ and then the value of~$A_{j,1}(J)$.
\end{proof}

\subsubsection{The curve $\Po/\Delta(2,5,\infty)$}
\label{sec:Lane}
In his Ph.D. thesis~\cite{Lane}, M. Lane constructed Atkin's polynomials for the family of Hecke triangle groups~$\Delta(2,m,\infty)$ for~$m\ge3$, and related their zeros to the non-ordinary locus of certain explicit families of hyperelliptic curves, by using explicit calculations of their Hasse–Witt matrices. 

As written in Section 2.2 of~\cite{Lane}, the Hecke triangle group~$\Delta(2,m,\infty)$ admit a modular embedding in a Hilbert modular variety of  dimension~$g=\frac{\varphi(m)}{2}$, where~$\varphi$ is the Euler's totient function. For~$j=1,\dots,\varphi(m)/2$, the specialization to~$N_j=m-2j$ (obtained by setting $n=2$ and~$k_j=1$ and~$r_j=j$ in the definition of~$N_j)$ of the formulae for~$a_{j,k}$ and~$b_{j,k}$ in Proposition~\ref{prop:rect} give
\[
\begin{aligned}
a_{j,k}&\=\frac{m^2(16k^2-1)-8r_jm+4j^2}{8m^2(4k^2-1)}\,,\\
b_{j,k}&\=\frac{(4mk-3m+2j)(4mk-5+2j)(4mk+m-2j)(4mk-m-2j)}{4^5m^4k(k-1)(2k-1)}\,.
\end{aligned}
\]
This recovers Part (i) of Theorem~3 of~\cite{Lane}. 

We consider in detail the case of the non-arithmetic curve~$\Po/\Delta(2,5,\infty)$ in the Hilbert modular surface of discriminant~$D=5$, compute the supersingular locus for some primes, and compare it to the work of Lane. For~$j\in\{1,2\}$, the Atkin polynomials~$\{A_{k,j}(J)\}_k$ satisfy the recursion in Proposition~\ref{prop:rect} with 
\[
a_{k,j}\=\frac{25(16k^2-1)-40j+4j^2}{200(4k^2-1)}\,,\quad b_{k,j}\=\frac{(20k-15+2j)(20k-25+2j)((20k-2j)^2-25)}{640000\cdot k(k-1)(2k-1)^2}\,.
\]
with initial data~$A_{0,j}(J)=1$ and $A_{1,j}(J)=J-\frac{5+2j}{20}$. 

We compute the first polynomials for~$j=1$
\[
A_{1,1}(J)=J-\frac{7}{20}\,,\quad A_{2,1}(J)=J^2-\frac{549}{600}J+\frac{3213}{48000}\,,
\]
and for~$j=2$
\[
A_{1,2}(J)=J-\frac{9}{20}\,,\quad A_{2,2}(J)=J^2-\frac{581}{600}J+\frac{1653}{16000}\,,\quad A_{3,2}(J)=J^3-\frac{184}{125}J^2+\frac{3393523}{6400000}J-\frac{3158883}{128000000}\,.
\]
Lane relates the zeros of the above Atkin polynomials to the non-ordinary locus of the family of hyperelliptic curves
\[
y^2\=x^{11}-2ax^6+bx\,,\quad a,b\in\overline{\F}_p\,,\quad b-a^2\ne0\,.
\]
By considering instead~$\Po/\Delta(2,5,\infty)$ embedded in the Hilbert modular surface of discriminant~$D=5$, the geometric object associated to the Atkin polynomials is the family of Jacobians of
\begin{equation}
\label{eq:delta5}
y^2=\begin{cases}
x^5-5x^3+5x-2\eta & \eta\ne\infty\,,\\
x^5-1 & \eta=\infty
\end{cases}\,.
\end{equation}
The relation between the Hauptmodul~$J$ of~$\Delta(2,5,\infty)$ and the parameter~$\eta$ in~\eqref{eq:delta5} is~$J=(\eta^2-1)^{-1}$. Let~$p\ge7$ be a prime. It is proven in~\cite{BLpHi} that, keeping the notation of Theorem~\ref{thm:main}, if~$p$ is inert in~$\Q(\sqrt{5})$, then
\[
n_{p,1}\=\frac{p-3}{20}+\frac{\epsilon}{2}+\frac{5-\delta_1}{5}\,,\quad n_{p,2}=\frac{3p-1}{20}+\frac{\epsilon}{2}+\frac{5-\delta_2}{5}\,,
\]
where~$\epsilon=1$ if~$p\equiv1\mod4$ and~$\epsilon=0$ otherwise, and~$(\delta_1,\delta_2)=(1,5)$ if~$p\equiv2\mod5$ and~$(\delta_1,\delta_2)=(5,2)$ if~$p\equiv3\mod5$. If~$p$ is split in~$\Q(\sqrt{5})$ then
\[
n_{p,1}\=\frac{3(p-1)}{20}+\frac{\epsilon}{2}+\frac{5-\delta_1}{5}\,,\quad n_{p,2}=\frac{p-1}{20}+\frac{\epsilon}{2}-\frac{5-\delta_2}{5}\,,
\]
where~$\epsilon=0$ if~$p\equiv1\mod4$ and~$\epsilon=1$ otherwise, and~$(\delta_1,\delta_2)=(5,5)$ if~$p\equiv1\mod5$ and~$(\delta_1,\delta_2)=(1,2)$ if~$p\equiv4\mod5$.

For instance, $p=7$ is inert in~$\Q(\sqrt{5})$ and we get~$n_{7,1}=1$ and~$n_{7,2}=1$. Then
\[
\mathrm{ss}_{7}(J)\equiv\mathrm{lcm}(J-7/20,J-9/20)\mod 7\equiv J(J-5)\mod 7
\]
is the supersingular polynomial (and there are no superspecial values). This means that, for~$p=7$, the supersingular curves in~\eqref{eq:delta5} whose Jacobian is supersingular are those with parameter~$\eta\in\{1,2\}$.

For the split prime~$p=11$ we have~$n_{11,1}=2$ and~$n_{11,2}=1$ and then
\[
\begin{aligned}
\mathrm{no}_{11}(J)&\equiv\mathrm{lcm}(A_{2,1}(J),A_{1,2}(J))\equiv (J-1)(J-8)\mod 11\,,\\
\mathrm{sp}_{11}(J)&\equiv\mathrm{gcd}(A_{2,1}(J),A_{1,2}(J))\equiv (J-1)\mod 11\,.
\end{aligned}
\]

Finally, for~$p=13$ we have~$n_{13,1}=1$ and~$n_{13,2}=3$ and then
\[
\begin{aligned}
\mathrm{ss}_{13}(J)&\equiv\mathrm{lcm}(A_{1,1}(J),A_{3,2}(J))\equiv J(J-1)(J-9)\mod 13\,,\\
\mathrm{sp}_{13}(J)&\equiv\mathrm{gcd}(A_{1,1}(J),A_{3,2}(J))\equiv (J-1)\mod 13\,.
\end{aligned}
\]

\subsection{The Teichmüller curve $W_{17}$}
\label{sec:Teich}

Let~$\alpha=\alpha_\infty=\frac{1+\sqrt{17}}{2}$, and denote by~$[a,b]=a+b\cdot\alpha$ an element of~$\mathcal{O}_{17}$, the ring of integers of the real quadratic field~$\Q(\sqrt{17})$. Consider the following 1-parameter family of genus-two curves:
\begin{equation}
\label{eq:W17}
Y^2\=(X+(At+B))\cdot(X+(Bt+A))\cdot(X^3+C(t+1)X^2+(D(t+1)^2+Et)X+F(t+1)^3+Gt(t+1))\,,
\end{equation}
where
\[
\begin{aligned}
A&\=5[2,1]\,,\quad B\=-2[5,3]\,,\quad C\=[3,1]\,,\quad D\=-\frac{1}{4}[827,529]\\
E&\=2^4\cdot17[3,2]\,,\quad F\=-\frac{1}{2}[4597,2943]\,,\quad G=2\cdot17[271,173]\,.
\end{aligned}
\]
The family of Jacobians of~\eqref{eq:W17} defines a curve~$\Pi_{17}$ in the Hilbert modular surface $\Po^2/\SL(\mathcal{O}_{17}\oplus\mathcal{O}_{17}^\vee)$ of discriminant~$17$. It is a double cover of a component of the (algebraically primitive) Teichmüller curve~$W_{17}$. The family has two associated Picard–Fuchs differential operators~$L_1$ and~$L_2$, described explicitly in Section 8 of~\cite{BM}. The uniformizing group of~$\Pi_{17}$, which is conjugated to the monodromy group of~$L_1v=0$, is a non-arithmetic torsion-free genus-zero Fuchsian group, not commensurable with a triangle group. In particular, the curve~$\Pi_{17}$ is not related to the curves studied in Section~\ref{sec:tri}.

As shown first in~\cite{BM}, the differential equation~$L_jv=0$, for~$j\in\{1,2\}$, has a holomorphic solution~$y_j(t)$ at the regular singular point~$t=0$ with~$y_j(0)=1$ and coefficients in~$\mathcal{O}_{17}[2^{-1}]$. Finally, as proven in Section 10 of the same paper, the curve~$\Pi_{17}$ is defined over~$\mathcal{O}_{17}[34^{-1}]$. 

From now on we consider a prime number~$p\ne2,17$. Corollary 2 of~\cite{BLpHi} and the dimension formula for the space of twisted modular forms on curves in Hilbert modular surfaces (Theorem 3.2 of~\cite{MZ}) imply that
\[
\mathrm{deg}(\mathrm{ph}_{p,1}(t))=\begin{cases}\frac{p-3}{2} &\text{if $p$ is inert in $\Q(\sqrt{17})$}\\\frac{3(p-1)}{2}& \text{if $p$ is split in $\Q(\sqrt{17})$}\end{cases},\quad
\mathrm{deg}(\mathrm{ph}_{p,2}(t))=\begin{cases}\frac{3p-1}{2} &\text{if $p$ is inert in $\Q(\sqrt{17})$}\\\frac{p-1}{2}& \text{if $p$ is split in $\Q(\sqrt{17})$}\end{cases}.
\]
In particular, the truncation of~$y_j(t)$ at order~$p$ does not give the partial Hasse polynomial if~$j=1$ and~$p$ is split in~$\Q(\sqrt{17})$ or if~$j=2$ and~$p$ is inert, because the degree of these polynomials is larger than~$p-1$ (see Corollary~3 in~\cite{BPf}). Nevertheless, Theorem~\ref{thm:Pade} applies to compute both the Atkin polynomials and all partial Hasse polynomials via Padé approximation. The generating functions of moments in~$J=t^{-1}$, computed from the integral solutions of the Picard-Fuchs equations, start by
\[
\begin{aligned}
\Phi_1(J)&\=1\+\frac{27-5\sqrt{17}}{8}J^{-1}\+\frac{4729-1135\sqrt{17}}{64}J^{-2}\+\frac{2069771-501805\sqrt{17}}{1024}J^{-3}\+\cdots\,,\\
\Phi_2(J)&\=1+\frac{23-5\sqrt{17}}{4}J^{-1}\+\frac{4607-1113\sqrt{17}}{32}J^{-2}\+\frac{518341-125699\sqrt{17}}{128}J^{-3}\+\cdots\,.
\end{aligned}
\]
The denominators of the first~$[m-1,m]$-Padé approximants, that is, the first Atkin polynomials, are listed below. The coefficients are written in terms of the integral basis~$[1,\alpha]$. For~$j=1$ one has
\[
\begin{aligned}
A_{1,1}(J)&\=J-\frac{[-16,5]}{2^2}\,,\quad A_{1,2}(J)\=J^2+\frac{[-5484,2013]}{2^4\cdot19}J+\frac{[16588,-6433]}{2^6\cdot19}\,,\\
A_{1,3}(J)&\=J^3+\frac{[-1778264,651953]}{2^2\cdot43\cdot373}J^2+\frac{[136493368,-53143951]}{2^5\cdot43\cdot373}J+\frac{[-914898692,357094875]}{2^9\cdot43\cdot373},
\end{aligned}
\]
and for~$j=2$ 
\[
\begin{aligned}
A_{2,1}(J)&\=J-\frac{[-14,5]}{2}\,,\quad A_{2,2}(J)\=J^2+\frac{[-41680,15355]}{2^2\cdot557}J+\frac{[188220,-73249]}{2^4\cdot557}\,,\\
A_{2,3}(J)&\=J^3+\frac{[-34055908888,12515409651]}{2^2\cdot103\cdot2939707}J^2+\frac{[10749417998972,-4186567148401]}{2^7\cdot103\cdot2939707}J\\&\+\frac{[-11766694843580,4593116159421]}{2^8\cdot103\cdot2939707}\,.
\end{aligned}
\]

The primes~$p=3,5$ are inert in~$\Q(\sqrt{17})$. As recalled in Section~\ref{sec:Koba}, the non-ordinary and the supersingular locus of~$\Pi_{17}$ modulo~$p$ coincide. By using the above formulae for the degree of the partial Hasse polynomials and Theorem~\ref{thm:main}, the abelian surfaces of supersingular reduction over~$\Pi_{17}$ correspond to the zeros of the polynomials:
\begin{itemize}[wide=0pt]
\item for~$p=3$ 
\[
\mathrm{ph}_{3,1}(J)\equiv A_{1,0}(J)\equiv1\mod 3,\quad \mathrm{ph}_{3,2}(J)\equiv A_{2,4}(J)\equiv J^4+(1+2\sqrt{17})\cdot(J^3+J^2+J) +1 \mod 3\,;
\]
\item for~$p=5$ 
\[
\begin{aligned}
\mathrm{ph}_{5,1}(J)&\equiv A_{1,1}(J)\equiv J+1\mod5\,,\\
\mathrm{ph}_{5,2}(J)&\equiv A_{2,7}(J)\equiv (J+1)\cdot(J^6+(2+4\sqrt{17})\cdot(J^4+J^2)-3\sqrt{17}\cdot J^3+1)\mod 5\,.
\end{aligned}
\]
In particular, the Jacobian of the curve in~\eqref{eq:W17} corresponding to~$t=-1$ is superspecial modulo~$p=5$.
\end{itemize}

As written in the last paragraph of the introduction, the partial Hasse polynomials~$\mathrm{ph}_{p,j}$ for~$\Pi_{17}$ are palindromic for all primes~$p<100$. This is not a feature of the Atkin polynomial~$A_{j,n}(J)$ (over~$\mathcal{O}_{17}[2^{-1}][J]$), nor of its modulo~$p$ reduction for~$n\ne n_{j,p}$.
\bibliography{Atkin}{} 

\begin{thebibliography}{10}

\bibitem{AG}
F.~Andreatta and E.~Z. Goren.
\newblock Hilbert modular forms: mod {$p$} and {$p$}-adic aspects.
\newblock {\em Mem. Amer. Math. Soc.}, 173(819):vi+100, 2005.

\bibitem{BG}
E.~Bachmat and E.~Z. Goren.
\newblock On the non-ordinary locus in {H}ilbert-{B}lumenthal surfaces.
\newblock {\em Math. Ann.}, 313(3):475--506, 1999.

\bibitem{BPf}
Gabriele Bogo.
\newblock Integrality of picard-fuchs differential equations of kobayashi
  geodesics and applications.

\bibitem{BLpHi}
Gabriele Bogo and Yingkun Li.
\newblock Partial hasse invariants for genus zero curves in hilbert modular
  varieties.

\bibitem{BLSpan}
Gabriele Bogo and Yingkun Li.
\newblock Span of restriction of {H}ilbert theta functions.
\newblock {\em Pure Appl. Math. Q.}, 19(1):61--83, 2023.

\bibitem{BN}
Gabriele Bogo and Younes Nikdelan.
\newblock Ramanujan systems of {R}ankin-{C}ohen type and hyperbolic triangles.
\newblock {\em Forum Math.}, 35(6):1609--1629, 2023.

\bibitem{BM}
Irene~I. Bouw and Martin M\"oller.
\newblock Differential equations associated with nonarithmetic {F}uchsian
  groups.
\newblock {\em J. Lond. Math. Soc. (2)}, 81(1):65--90, 2010.

\bibitem{CW}
Paula Cohen and J\"urgen Wolfart.
\newblock Modular embeddings for some nonarithmetic {F}uchsian groups.
\newblock {\em Acta Arith.}, 56(2):93--110, 1990.

\bibitem{Dwork}
Bernard~M. Dwork.
\newblock {\em Lectures on {$p$}-adic differential equations}, volume 253 of
  {\em Grundlehren der Mathematischen Wissenschaften}.
\newblock Springer-Verlag, New York-Berlin, 1982.
\newblock With an appendix by Alan Adolphson.

\bibitem{Ford}
Lester~R. Ford.
\newblock {\em Automorphic functions}.
\newblock Chelsea Publishing Co., New York, second edition, 1951.

\bibitem{Gauss}
Carl~Friedrich Gauss.
\newblock {\em Disquisitiones arithmeticae}, volume~10 of {\em Colecci\'on
  Enrique P\'erez Arbel\'aez [Enrique P\'erez Arbel\'aez Collection]}.
\newblock Academia Colombiana de Ciencias Exactas, F\'isicas y Naturales,
  Bogot\'a, 1995.
\newblock Translated from the Latin by Hugo Barrantes Campos, Michael Josephy
  and \'Angel Ruiz Z\'u\~niga, With a preface by Ruiz Z\'u\~niga.

\bibitem{GHi}
Eyal~Z. Goren.
\newblock Hasse invariants for {H}ilbert modular varieties.
\newblock {\em Israel J. Math.}, 122:157--174, 2001.

\bibitem{GoBook}
Eyal~Z. Goren.
\newblock {\em Lectures on {H}ilbert modular varieties and modular forms},
  volume~14 of {\em CRM Monograph Series}.
\newblock American Mathematical Society, Providence, RI, 2002.
\newblock With the assistance of Marc-Hubert Nicole.

\bibitem{Ig2}
Jun-ichi Igusa.
\newblock Class number of a definite quaternion with prime discriminant.
\newblock {\em Proc. Nat. Acad. Sci. U.S.A.}, 44:312--314, 1958.

\bibitem{KZ}
M.~Kaneko and D.~Zagier.
\newblock Supersingular {$j$}-invariants, hypergeometric series, and {A}tkin's
  orthogonal polynomials.
\newblock In {\em Computational perspectives on number theory ({C}hicago, {IL},
  1995)}, volume~7 of {\em AMS/IP Stud. Adv. Math.}, pages 97--126. Amer. Math.
  Soc., Providence, RI, 1998.

\bibitem{Lane}
M.~E. Lane.
\newblock {\em Generalized Atkin Polynomials and Non-Ordinary Hyperelliptic
  Curves}.
\newblock Ph.d. dissertation, University of California, Los Angeles, 2012.
\newblock ProQuest ID: Lane\_ucla\_0031D\_10844. Merritt ID:
  ark:/13030/m51v5hw2.

\bibitem{MV}
Martin M\"oller and Eckart Viehweg.
\newblock Kobayashi geodesics in {$\mathcal{A}_g$}.
\newblock {\em J. Differential Geom.}, 86(2):355--379, 2010.

\bibitem{MZ}
Martin M\"oller and Don Zagier.
\newblock Modular embeddings of {T}eichm\"uller curves.
\newblock {\em Compos. Math.}, 152(11):2269--2349, 2016.

\bibitem{Sa}
Yuichi Sakai.
\newblock The {A}tkin orthogonal polynomials for the {F}ricke groups of levels
  5 and 7.
\newblock {\em Int. J. Number Theory}, 10(8):2243--2255, 2014.

\bibitem{Sibner}
R.~J. Sibner.
\newblock Symmetric {F}uchsian groups.
\newblock {\em Amer. J. Math.}, 90:1237--1259, 1968.

\bibitem{Ts}
Hiroyuki Tsutsumi.
\newblock The {A}tkin orthogonal polynomials for congruence subgroups of low
  levels.
\newblock {\em Ramanujan J.}, 14(2):223--247, 2007.

\bibitem{vHvSZ}
Mark H.~F. van Hoeij, Duco van Straten, and Wadim Zudilin.
\newblock A hyperelliptic saga on a generating function of the squares of
  legendre polynomials.
\newblock {\em Journal of Experimental Mathematics}, 1(2):278–305, Aug. 2025.

\end{thebibliography}
\bibliographystyle{plain}
\end{document}